\newtheorem{thm}{Theorem}[section]
\newtheorem*{thm*}{Theorem}
\newtheorem{ithm}{Theorem}
\newtheorem{prop}[thm]{Proposition}
\newtheorem{cor}[thm]{Corollary}
\theoremstyle{definition}
\newtheorem{definition}[thm]{Definition}
\theoremstyle{remark}
\newtheorem{remark}[thm]{Remark}
\numberwithin{equation}{section}
\newcommand{\R}{\mathbb{R}}
\newcommand{\C}{\mathcal{C}}
\newcommand{\SO}{\mathrm{SO}}
\renewcommand{\O}{\mathrm{O}}
\newcommand{\SU}{\mathrm{SU}}
\newcommand{\Sg}{\mathrm{S}}
\newcommand{\G}{\mathrm{G}}
\newcommand{\Ber}{B}
\newcommand{\V}{\mathcal{V}} 
\newcommand{\Hc}{\mathcal{H}}
\newcommand{\ico}{\mathrm{Ico}}
\newcommand{\oct}{\mathrm{Oct}}
\newcommand{\tet}{\mathrm{Tet}}
\newcommand{\gra}{\mathrm{Gr}^+_2 \!\left( T S^4 \right)}
\renewcommand{\Re}{\operatorname{Re}}
\renewcommand{\Im}{\operatorname{Im}}
\begin{document}

\title{Associative submanifolds of the Berger space}

\author{Gavin Ball \and Jesse Madnick}

\newcommand{\Addresses}
{{  \bigskip
		\textsc{Universit\'{e} du Qu\'{e}bec \`{a} Montr\'{e}al} \par\nopagebreak
		\textsc{D\'{e}partement de math\'{e}matiques}\par\nopagebreak
		\textsc{Case postale 8888, succursale centre-ville, Montr\'{e}al (Qu\'{e}bec), H3C 3P8, Canada}\par\nopagebreak
		\textit{E-mail address}: \texttt{gavin.ball@cirget.ca} \\
		
		\bigskip
		\textsc{McMaster University} \par\nopagebreak
		\textsc{Department of Mathematics \& Statistics}\par\nopagebreak
		\textsc{Hamilton, ON, Canada, L8S 4K1}\par\nopagebreak
		\textit{E-mail address}: \texttt{madnickj@mcmaster.ca}
		
}}

\begin{abstract}
We study associative submanifolds of the Berger space $\SO(5)/\SO(3)$ endowed with its homogeneous nearly-parallel $\G_2$-structure. We focus on two geometrically interesting classes: the ruled associatives, and the associatives with special Gauss map.

We show that the associative submanifolds ruled by a certain special type of geodesic are in correspondence with pseudo-holomorphic curves in $\gra.$ Using this correspondence, together with a theorem of Bryant on superminimal surfaces in $S^4,$ we prove the existence of infinitely many topological types of compact immersed associative 3-folds in $\SO(5)/\SO(3)$.

An associative submanifold of the Berger space is said to have special Gauss map if its tangent spaces have non-trivial $\SO(3)$-stabiliser. We classify the associative submanifolds with special Gauss map in the cases where the stabiliser contains an element of order greater than 2. In particular, we find several homogeneous examples of this type.

\end{abstract}

\maketitle

\tableofcontents

\section{Introduction}

\indent The \textit{Berger space} is the compact homogeneous $7$-manifold
\begin{equation*}
B = \frac{\text{SO}(5)}{\text{SO}(3)},
\end{equation*}
where $\text{SO}(3) \subset \text{SO}(5)$ is a non-standard embedding obtained from the $\text{SO}(3)$-action on the space $\text{Sym}^2_0(\mathbb{R}^3) = \mathbb{R}^5$ of harmonic quadratic forms in three variables.  As an isotropy-irreducible space, $B$ carries a unique $\text{SO}(5)$-invariant metric $g$ up to scale, and this metric is necessarily Einstein.

The space $(B,g)$ was first discovered by Berger \cite{Berger61} in his classification of normal homogeneous metrics of positive curvature.  In fact, homogeneous metrics of positive curvature (not necessarily normal) have been classified by Wallach \cite{Wall72PosCurv} and B\'{e}rard-Bergery \cite{BerBer76PosCurv}, and the complete list is quite short.  Aside from spheres and projective spaces, it consists of sporadic examples in dimensions $6$, $7$, $12$, $13$, and $24$.


Our interest in $(B,g)$, however, comes from the theory of special holonomy.  Indeed, the first known explicit example of an $8$-manifold with $\text{Spin}(7)$ holonomy was $X = \mathbb{R}^+ \times B$ equipped with the cone metric $g_X = dr^2 + r^2 g$ \cite{BryExcept}.  Moreover, $B$ carries a $\text{G}_2$-structure $\varphi \in \Omega^3(B)$ that satisfies
\begin{equation}
d\varphi = 4\ast_\varphi \varphi \label{eq:NP}
\end{equation}

It was later realized \cite{Bar93KSp} that this is an instance of a more general phenomenon.  Namely, given a Riemannian $7$-manifold $(M, g_M)$, the $8$-manifold $X = \mathbb{R}^+ \times M$ equipped with the cone metric $g_X = dr^2 + r^2g_M$ will have holonomy contained in $\text{Spin}(7) \subset \text{SO}(8)$ if and only if $g_M$ is induced from a $\text{G}_2$-structure $\varphi \in \Omega^3(M)$ satisfying (\ref{eq:NP}).  Such $\text{G}_2$-structures are said to be \textit{nearly-parallel} \cite{FrKaMoSe97}, and serve as models for conical singularities of $\text{Spin}(7)$-manifolds. \\

From the point of view of $\G_2$-geometry, the most natural class of submanifolds of a nearly-parallel $\G_2$-manifold $(M, \varphi)$ are the associative submanifolds.  An \textit{associative submanifold} is a $3$-dimensional submanifold $N \to M$ that satisfies the first-order PDE system
\begin{equation*}
\varphi|_N = \text{vol}_N.
\end{equation*}
Although the nearly-parallel equation (\ref{eq:NP}) shows that $\varphi$ is not closed, and hence not a calibration, it is nevertheless the case that associative $3$-folds in $M$ are minimal submanifolds.  Indeed, if $N \subset M$ is an associative $3$-fold, then the $4$-dimensional cone $\mathbb{R}^+ \times N$ inside the $8$-dimensional cone $(\mathbb{R}^+ \times M, dr^2 + r^2g_M)$ will be a Cayley $4$-fold, which is calibrated.  In short, associative $3$-folds in nearly-parallel $\G_2$-manifolds provide models for conically singular Cayley $4$-folds in $\text{Spin}(7)$-manifolds.

Associative 3-folds in nearly parallel $\G_2$-manifolds have been studied by Lotay \cite{LotAssoc}, who considers the case of the $7$-sphere endowed with the standard round metric, and by Kawai \cite{KawAssoc}, who considers the $7$-sphere equipped with the so-called squashed metric. \\

Associative submanifolds in manifolds with \emph{torsion-free} $\G_2$-structure play a fundamental role in efforts to define an invariant of $\G_2$-manifolds by counting \emph{$\G_2$-instantons}, a certain type of gauge theoretical object \cites{DonSeg,DonTho,Joyce18Count,Wal17G2Inst}. In order to deal with technical difficulties arising in this program, it is likely to be necessary to consider gauge theory and associative submanifolds for classes of $\G_2$-structures which satisfy conditions weaker than torsion-free. The \emph{coclosed} $\G_2$-structures, i.e. those satisfying $d *_{\varphi} \varphi = 0,$ are the most natural candidate for such a weaker class, and the nearly parallel $\G_2$-structures form a subclass of these structures.

Gauge theory on nearly parallel $\G_2$-manifolds has been studied by Ball and Oliveira \cite{BaOlAloffWallach} in the case of the Aloff-Wallach manifolds $\SU(3) / \mathrm{S}^1_{k,l},$ and by Waldron \cite{Wal20} in the case of the round 7-sphere $S^7$.

\subsection{Results and Methods} In this work, we study associative submanifolds of the Berger space $B$. We adopt a novel point of view on the Berger space: we think of it as the space of Veronese surfaces in the 4-sphere $S^4$ (see \S\ref{ssect:Veronese}). Our main result is:

\begin{ithm}[\ref{thm:TopType}]\label{ithm:TopType}
	There exist infinitely many topological types of compact (immersed, generically 1-1) $\mathcal{C}$-ruled associative submanifolds of $\Ber.$
\end{ithm}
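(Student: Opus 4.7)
The plan is to combine two ingredients: the correspondence, established earlier in the paper, between $\mathcal{C}$-ruled associative submanifolds of $\Ber$ and pseudo-holomorphic curves in $\gra$; and Bryant's theorem that every compact Riemann surface admits a superminimal immersion into $S^4$. The strategy is to produce compact pseudo-holomorphic curves in $\gra$ of arbitrarily large genus as twistor lifts of Bryant's surfaces, and then translate these via the correspondence into compact associative 3-folds of varying topological type.

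First I would identify a distinguished class of pseudo-holomorphic curves in $\gra$ with twistor lifts of surfaces in $S^4$. The bundle $\gra$ fibers over $S^4$ with fiber $\mathrm{Gr}^+_2(\mathbb{R}^4) \cong S^2 \times S^2$, so it contains as $S^2$-subbundles the two twistor spaces $\mathcal{Z}_\pm \cong \mathbb{CP}^3$ of $S^4$. I expect the almost complex structure on $\gra$ governing pseudo-holomorphicity to restrict on each twistor subbundle to the standard almost complex structure, so that holomorphic curves in $\mathcal{Z}_\pm$ yield pseudo-holomorphic curves in $\gra$. By the classical twistor correspondence, holomorphic curves in $\mathcal{Z}_+$ are precisely the horizontal lifts of superminimal surfaces in $S^4$. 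Thus Bryant's theorem produces, for every $g \geq 0$, a compact superminimal surface $\Sigma_g \subset S^4$ of genus $g$, and hence a compact pseudo-holomorphic curve $\widetilde{\Sigma}_g \subset \gra$.

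Feeding $\widetilde{\Sigma}_g$ through the correspondence produces a compact $\mathcal{C}$-ruled associative 3-fold $N_g \subset \Ber$ fibering over $\widetilde{\Sigma}_g$ with generic fiber the closed ruling geodesic, an $S^1$. To distinguish the topological types of the $N_g$ as $g$ varies, I would compute a suitable invariant such as the first Betti number, which for an $S^1$-bundle over a genus-$g$ surface is $2g$ or $2g+1$ and thus separates all but finitely many values of $g$.

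The main obstacle, I expect, will not be existence itself but the verification that each $N_g$ is compact, immersed, and generically 1-1, together with enough control on its topology to distinguish infinitely many genera. Compactness requires that the $\mathcal{C}$-geodesics close up along $\widetilde{\Sigma}_g$, which hinges on the global dynamics of these special geodesics on $\Ber$ established earlier in the paper. The generic 1-1 property should follow from a transversality argument ruling out shared multiple covers, taking advantage of the freedom in Bryant's construction to perturb $\Sigma_g$. Once these technicalities are in hand, the separation of topological types reduces to the invariant calculation just indicated.
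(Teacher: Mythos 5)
Your overall strategy---Bryant's superminimal surfaces, lifted to pseudo-holomorphic curves in $\gra$ and fed through the ruling correspondence, with topological types separated by the genus of the base---is the same as the paper's, but there are two concrete problems. The first is structural: the twistor spaces $\mathcal{Z}_\pm \cong \mathbb{CP}^3$ are \emph{not} $S^2$-subbundles of $\gra$. Since $\mathrm{Gr}_2^+(\mathbb{R}^4) \cong S^2 \times S^2$, the Grassmann bundle $\gra$ fibres \emph{over} each twistor space (in the paper, $\zeta_1,\zeta_2,\zeta_3$ are semibasic for $\SO(5) \to \SO(5)/\mathrm{U}(2) \cong \mathbb{CP}^3$, exhibiting $\gra$ as an $S^2$-bundle over $\mathbb{CP}^3$), so a holomorphic curve in the twistor space cannot simply be ``included'' into $\gra$; one needs the lifting statement of Proposition \ref{prop:JNK}, or equivalently one takes the Gauss lift $p \mapsto (p,T_p\Sigma)$ of the superminimal surface directly.

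The second, more serious, problem is that you have misplaced the real difficulty. Compactness of $N_g$ is not an issue: every $\C$-curve is an orbit of a conjugate of the circle subgroup $S^1_{(-2,1)}$, so the rulings always close up, and the image of the compact $\mathrm{T}^3$-bundle $\mathcal{G}(S) \to \Ber$ is compact whenever $S$ is. The genuine obstacle is that Theorem \ref{thm:RuledAssoc}(2) only produces an \emph{immersed} associative over the dense subset $S^\circ = S \setminus Z$, where $Z = \{2|W_1|^2 = 3|W_4|^2,\ W_2 = W_3 = 0\}$ is the locus on which the map $\mathcal{G}(S) \to \Ber$ degenerates; to obtain a compact immersed associative one must prove $Z = \emptyset$. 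Your proposal never addresses this, and for the lift you propose (the positive-spin, i.e.\ horizontal, twistor lift, which after adapting frames has $\zeta_2 = \zeta_3 = 0$ but in general $\zeta_4 \neq 0$) the condition $2|W_1|^2 = 3|W_4|^2$ is a nontrivial real-analytic condition that can genuinely hold: the Gauss lift of a Veronese surface---itself superminimal---satisfies it identically and is precisely the case excluded from the correspondence. The paper avoids this by using superminimal surfaces of \emph{negative} spin, whose associated null-torsion $J_{\mathrm{NK}}$-curves (Xu) lift to $J$-holomorphic curves with $\zeta_2 = \zeta_4 = 0$ while $\zeta_1 \neq 0$ (it carries the induced metric of the surface), so that $Z$ is manifestly empty. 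Your Betti-number count for separating genera is fine, and the ``generically 1-1'' property is inherited directly from Bryant's generically 1-1 superminimal immersions rather than requiring a new transversality argument.
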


Topologically, the associative submanifolds in Theorem \ref{ithm:TopType} are circle bundles over genus $g$ surfaces, for every $g \geq 0$.  In fact, they all ruled by a geometrically natural class of geodesics, which we call $\mathcal{C}$-curves.  The space of $\mathcal{C}$-curves in $B$ may be identified with the $8$-manifold $\text{Gr}_2^+(TS^4)$, the Grassmannian of oriented tangent $2$-planes to $S^4$.  Consequently, we can view ruled $3$-folds in $\Ber$ as surfaces in $\text{Gr}^+_2(TS^4).$ Conversely, given a surface $S$ in $\gra$ there is a corresponding ruled 3-fold $\Gamma\!\left(S \right)$ in $\Ber.$  We then ask which surfaces in $\text{Gr}_2^+(TS^4)$ correspond to ruled associatives in $\Ber$.  The answer is given by:


\begin{ithm}[\ref{thm:RuledAssoc}]\label{ithm:RuledAssoc}
There is an $\SO(5)$-invariant (non-integrable) almost complex structure $J$ on $\text{Gr}_2^+(TS^4)$ such that:
	\begin{enumerate}
		\item Any ruled associative submanifold of $\Ber$ is locally of the form $\Gamma\!\left( S \right)$ for some $J$-holomorphic curve $S$ in $\mathrm{Gr}_2^+(TS^4)$. 
		\item For each $J$-holomorphic curve $S$ in $\gra$ not locally equivalent to the Gauss lift of a Veronese surface there is a dense subset $S^{\circ} \subset S$ such that $\Gamma\!\left( S^\circ \right)$ is a ruled associative submanifold of $\Ber.$
	\end{enumerate}
\end{ithm}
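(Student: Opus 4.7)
The plan is to exploit the method of moving frames on $\SO(5)$, viewed simultaneously as a principal bundle over $\Ber$ and over $\gra$. Since $\gra = \SO(5)/H$ for an isotropy subgroup $H$ isomorphic to $\SO(2)\times\SO(2)$, the tangent space at a point decomposes under the isotropy action into two real $2$-planes. The unique (up to sign) $\SO(5)$-invariant almost complex structure $J$ respecting this decomposition is the natural candidate; non-integrability is then verified directly by computing the Nijenhuis tensor from the structure equations of $\SO(5)$.

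For part (1), I would start with a $\mathcal{C}$-ruled associative $N \subset \Ber$ and adapt a coframing on the pull-back to $\SO(5)$ so that tangent directions split into the ruling direction and two transverse directions. The $\G_2$-structure $\varphi$ and the induced volume form on $N$ can both be expressed in the Maurer--Cartan forms of $\SO(5)$; expanding $\varphi|_N = \operatorname{vol}_N$ together with the consequences of its exterior differentiation yields a system of equations. Because each $\mathcal{C}$-curve is a geodesic whose first-order invariants are constant along it, this system descends from $N$ to the transverse surface $S \subset \gra$ parametrising the rulings, and the descended equations are precisely the condition that $S$ be $J$-holomorphic.

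For part (2), given a $J$-holomorphic curve $S \subset \gra$, form $\Gamma(S) \subset \Ber$ by sweeping out the $\mathcal{C}$-curves corresponding to points of $S$. This comes with a tautological map $\Gamma : S \times \R \to \Ber$, and $S^\circ$ will be the open locus on which this map is an immersion. Reversing the frame computation from part (1) shows that wherever $\Gamma$ is an immersion the associative equation is satisfied. The remaining task is to show that unless $S$ is locally the Gauss lift of a Veronese surface, the set $S^\circ$ is dense. This amounts to showing that a $2$-parameter family of $\mathcal{C}$-curves whose envelope has dimension less than $3$ must arise from a Veronese surface, which I would establish by analysing the rank of $d\Gamma$ using the structure equations and matching the degenerate locus with the Veronese configurations described in \S\ref{ssect:Veronese}.

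The principal obstacle is the moving-frame calculation in part (1): finding a coframing adapted to the ruling in which the associative equation cleanly decouples into the $J$-holomorphicity condition on $S$, and verifying that the exterior derivative of the associative equation produces no further integrability constraints beyond those already encoded by $\overline{\partial}_J S = 0$. A secondary but delicate point is the precise identification of the degenerate locus in part (2) with Veronese Gauss lifts, which requires tracing through the isotropy representation on $\gra$ to recognise the corresponding geometry in $S^4$.
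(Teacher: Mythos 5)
Your overall strategy (moving frames on $\SO(5)$ viewed as a double fibration over $\Ber$ and over $\gra$) is the same as the paper's, but there is a genuine gap at the very first step: the identification of $J$. You assert that the isotropy representation of $\gra \cong \SO(5)/\mathrm{T}^2$ splits the tangent space into \emph{two} real $2$-planes and that there is a unique invariant almost complex structure up to sign. In fact the tangent space is $8$-dimensional and splits into \emph{four} $2$-dimensional weight spaces of the torus (corresponding to the four positive roots of $B_2$; in the paper's notation these are the spans of the real and imaginary parts of $\zeta_1,\zeta_2,\zeta_3,\zeta_4$). Consequently there are $2^4=16$ invariant almost complex structures, half of which are integrable (one for each Weyl chamber), and the theorem is false for most of them. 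Pinning down the correct, non-integrable $J$ is not a formality: the paper does it by exhibiting the explicit $\lambda$-semibasic forms $\zeta_1,\dots,\zeta_4$ and, crucially, the linear relation (\ref{eq:omegazeta}) expressing $\omega_2+i\omega_3$, $\omega_4+i\omega_5$, $\omega_6-i\omega_7$ and $\gamma_2-i\gamma_3$ in terms of the $\zeta_i$. Note in particular that $\zeta_1$ and $\zeta_4$ \emph{mix} a semibasic form over $\Ber$ with a connection form $\gamma_2-i\gamma_3$; your proposal has no mechanism for seeing this, and without it the correspondence cannot land in the $8$-manifold $\gra$ rather than some smaller quotient.

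This same mixing is also where your part (1) is thinner than it looks. The associative condition $\varphi|_N=\operatorname{vol}_N$, via Cauchy--Schwarz, only forces the three forms $\omega_2+i\omega_3$, $\omega_4+i\omega_5$, $\omega_6-i\omega_7$ to be pairwise proportional modulo $\overline{\sigma}$; $J$-holomorphicity of the image in $\gra$ additionally requires $(\omega_2+i\omega_3)\wedge(\gamma_2-i\gamma_3)=0$, a constraint on the connection form that is \emph{not} part of the pointwise associative condition and is obtained in the paper only by differentiating the first set of identities and splitting into cases. You do gesture at ``consequences of its exterior differentiation,'' so the idea is present, but you should be aware that this is the step where the argument could fail and where the case analysis lives. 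Finally, in part (2) your plan to show density of $S^\circ$ by a rank analysis of $d\Gamma$ does not by itself yield the required dichotomy (dense or empty): the paper needs the $\overline{\partial}$-relations $\overline{\partial}W_2=\overline{\partial}W_3=0$ to conclude that the degenerate locus is either discrete or cut out by a real-analytic equation on a connected curve, hence either nowhere dense or everything; without holomorphicity/analyticity of the obstruction functions you cannot exclude a degenerate locus with interior, which is exactly the Veronese case you must isolate.
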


The proof of Theorem \ref{ithm:TopType} will follow from a construction using the correspondence in Theorem \ref{ithm:RuledAssoc} together with a result of Bryant \cite{Bry82} on superminimal surfaces in $S^4$ and a result of Xu \cite{Xu10} on holomorphic curves in the nearly-K\"ahler $\mathbb{CP}^3$.  One subtlety we encounter is the need to show that the locus $S \setminus S^\circ$ is empty in order to ensure immersion for the resulting associative submanifolds. \\

In $\S$4, we turn to explicit examples.  For this, we consider the (non-transitive) $\text{SO}(3)$-action on the Grassmannian $\text{Gr}_{\text{ass}}(T_pB)$ of associative $3$-planes at $p \in B$ induced by the isotropy action on $T_pB$.  While the generic associative $3$-plane in $T_pB$ will have trivial stabiliser, larger stabilisers are also possible. We say that an associative submanifold $N$ in $B$ has \emph{special Gauss map} if its tangent planes all have non-trivial stabiliser $\G \leq \SO(3)$. The main result of $\S$4 is the classification of associatives in $B$ with special Gauss map for which the stabiliser $\G$ contains an element of order greater than $2$.  We summarise the classification in the following theorem: 

\begin{ithm}\label{ithm:GaussClass}
	The stabiliser of an associative $3$-plane, if it contains an element of order greater than $2$, is one of:
	\begin{equation*}
	\mathrm{O}(2), \ \ \mathrm{A}_5, \ \ \mathrm{S}_4, \ \ \mathbb{Z}_5, \ \ \mathbb{Z}_4, \ \ \mathbb{Z}_3
	\end{equation*}
	Moreover, suppose that $N$ is an associative 3-fold in $\Ber$ such that $T_pN$ has $\SO(3)$-stabiliser equal to $\G$ for all $p \in N.$
	\begin{itemize}
		\item If $\G = \mathrm{O}(2),$ there are three possibilities corresponding to three distinct $\SO(3)$-orbits on $\mathrm{Gr}_{\mathrm{ass}} \left(T_p \Ber \right).$
		\begin{itemize}[label=$\circ$]
			\item $N$ is a ruled submanifold of $\Ber$ that corresponds via Theorem \ref{ithm:RuledAssoc} to the Gauss lift to $\gra$ of a superminimal surface in $S^4$ or to a fibre of the map $\gra \to \mathbb{CP}^3.$
			\item $N$ is $\SO(5)$-equivalent to (an open subset of) the associative submanifold described in Theorem \ref{thm:145Case}. This associative is homogeneous under an action of $\SO(2) \times \SO(3)$ and diffeomorphic to $\SO(3)/\mathbb{Z}_2.$
			\item $N$ is $\SO(5)$-equivalent to (an open subset of) the associative submanifold described in Theorem \ref{thm:167Case}. This associative is homogeneous under an action of $\mathrm{U}(2)$ and diffeomorphic to $S^3.$
		\end{itemize}
		
	\item If $\G = \mathrm{A}_5$, then $N$ is $\SO(5)$-equivalent to (an open subset of) the associative submanifold described in Theorem \ref{thm:IcoEg}. This associative is homogeneous under an action of $\SO(3)$ and diffeomorphic to the Poincar\'e homology sphere $\SO(3) / \mathrm{A}_5.$
	
	\item If $\G = \mathrm{S}_4$, then $N$ is $\SO(5)$-equivalent to (an open subset of) one of the two associative submanifolds described in Theorem \ref{thm:OctEg}. The first associative is homogeneous under an action of $\SO(3)$ and diffeomorphic to $\SO(3)/\mathrm{S}_4,$ while the second is homogeneous under a different action of $\SO(3)$ and diffeomorphic to $\SO(3) / \mathrm{D}_2$.
	
	\item If $\G = \mathbb{Z}_5,$ then $N$ is a ruled submanifold of $\Ber$ that corresponds via Theorem \ref{ithm:RuledAssoc} to the normal lift to $\gra$  of a superminimal surface in $S^4$.
	
	\item If $\G = \mathbb{Z}_4,$ then $N$ is a ruled submanifold of $\Ber$ that corresponds via Theorem \ref{ithm:RuledAssoc} to the lift of a pseudoholomorphic curve in the nearly-K\"ahler $\mathbb{CP}^3$ to $\gra$ via Proposition \ref{prop:JNK}.
	
	\item There are no associative submanifolds $N$ with tangent spaces having $\SO(3)$-stabiliser everywhere equal to $\mathbb{Z}_3.$
	\end{itemize}
\end{ithm}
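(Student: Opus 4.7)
The proof naturally splits into classifying the possible stabiliser groups and, for each admissible group, classifying the corresponding associative 3-folds. For the first part, I would identify $T_p\Ber$ with the 7-dimensional irreducible $\SO(3)$-representation $V_3$ (harmonic cubic polynomials on $\R^3$). A stabiliser $\G \leq \SO(3)$ of some associative 3-plane is precisely a subgroup for which $V_3|_\G$ contains a 3-dimensional invariant subspace $P$ on which $\varphi$ restricts to a volume form. Running through the classical list of closed subgroups of $\SO(3)$---cyclic $\mathbb{Z}_n$, dihedral $D_n$, tetrahedral $A_4$, octahedral $S_4$, icosahedral $A_5$, $\SO(2)$, $\O(2)$, and $\SO(3)$ itself---and restricting to those containing an element of order $>2$, a direct representation-theoretic check using the explicit decomposition of $V_3|_\G$ together with the associativity condition eliminates all but the listed groups $\O(2), A_5, S_4, \mathbb{Z}_5, \mathbb{Z}_4, \mathbb{Z}_3$. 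The groups $A_4$ and the $D_n$ with $n \geq 3$ are ruled out because their invariant 3-planes in $V_3$ fail to be associative.

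For the second part, for each admissible $\G$ I would compute the fixed set $\mathrm{Gr}_{\mathrm{ass}}^\G(T_p\Ber)$ and decompose it into $\SO(3)$-orbits. In the cyclic cases $\G = \mathbb{Z}_4, \mathbb{Z}_5$, and in one of the three $\O(2)$-orbits, every invariant associative 3-plane contains a distinguished $\G$-stabilised line, which at each point of $N$ selects a single $\C$-direction, forcing $N$ to be $\C$-ruled. Theorem \ref{ithm:RuledAssoc} then produces a $J$-holomorphic curve in $\gra$, and the extra $\G$-symmetry forces it to be of the required special type (Gauss or normal lift of a superminimal surface in $S^4$, fibre of $\gra \to \mathbb{CP}^3$, or lift of a pseudo-holomorphic curve in the nearly-K\"ahler $\mathbb{CP}^3$). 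For the remaining cases $\G = A_5, S_4$, and the two non-ruled $\O(2)$-orbits, the $\G$-fixed set is a single $\SO(3)$-orbit (or a finite union thereof), so the Gauss map of $N$ is essentially rigid. Working on the $\SO(5)$-adapted frame bundle along $N$, I would write out the structure equations restricted by the Gauss-map constraint, verify that the resulting exterior differential system is Frobenius with a finite-dimensional solution space, and identify the solutions with the homogeneous associatives described in Theorems \ref{thm:145Case}, \ref{thm:167Case}, \ref{thm:IcoEg}, and \ref{thm:OctEg}.

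The main obstacle is the non-existence statement for $\G = \mathbb{Z}_3$. Because the $\mathbb{Z}_3$-fixed set in $\mathrm{Gr}_{\mathrm{ass}}$ is non-empty and positive-dimensional, the obstruction cannot be read off from a dimension count and must instead emerge from prolongation. I would differentiate the constraint $T_pN \in \mathrm{Gr}_{\mathrm{ass}}^{\mathbb{Z}_3}$ once, using the Berger space structure equations to compute the induced torsion in the $\mathbb{Z}_3$-isotypic decomposition of $V_3$, and show that it produces an inconsistency admitting no integral 3-fold. Separating genuine obstructions from those already enforced by the associativity and $\G$-invariance conditions---so that the prolongation is actually obstructed rather than merely of smaller-than-expected rank---will be the delicate computational step, in the spirit of the involutivity analyses in Lotay's and Kawai's work on the round and squashed $7$-spheres.
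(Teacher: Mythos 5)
Your outline follows the paper's strategy essentially step for step: the stabiliser classification via the isotypic decomposition of $\mathcal{H}_3$ under each closed subgroup of $\SO(3)$, the reduction of the ruled cases ($\mathcal{O}_{123}$, $\mathbb{Z}_4$, $\mathbb{Z}_5$) to Theorem \ref{ithm:RuledAssoc}, the rigidity of the remaining cases via restricting the Maurer--Cartan form to an adapted frame bundle and invoking Cartan's theorem on maps into Lie groups, and the exclusion of $\mathbb{Z}_3$ by prolongation (a computation the paper likewise only sketches). Two small corrections to your exclusion step: $\mathrm{A}_4$ and the dihedral groups are not ruled out because their invariant $3$-planes fail to be associative --- some of those planes \emph{are} associative --- but because every such associative plane turns out to have strictly larger stabiliser ($\mathrm{S}_4$, $\mathrm{A}_5$, or $\mathrm{O}(2)$); and the $\mathcal{O}_{145}$ and $\mathcal{O}_{167}$ examples are not ``non-ruled'' but merely rigid, being the ruled associatives corresponding to the normal lift of a totally geodesic $S^2 \subset S^4$ and to a twistor fibre, respectively.
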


In the above result, $\text{A}_5$ and $\text{S}_4$ are the symmetry groups of the icosahedron and octahedron, respectively.


\subsection{Acknowledgments}

We would like to thank Michael Albanese, Robert Bryant, and McKenzie Wang for helpful conversations related to this work. The first author also thanks the Simons Foundation for funding as a
graduate student member of the Simons Collaboration on Special Holonomy in
Geometry, Analysis and Physics for the period during which a large part of this project was completed.

\section{Geometry of the Berger space}

\subsection{Representation theory of $\SO(3)$}

Let the group $\SO(3)$ act on $\R^3 = \text{span} \lbrace x, y, z \rbrace$ in the usual way. This action extends to an action of $\SO(3)$ on the polynomial ring $\R [ x,y,z]$. Let $\V_n \subset \R [x,y,z]$ be the $\SO(3)$-submodule of homogeneous polynomials of degree $n$, and let $\Hc_n \subset \V_n$ denote the $\SO(3)$-submodule of harmonic polynomials of degree $n$, an irreducible $\SO(3)$-module of dimension $2n+1$. Every finite dimensional irreducible $\SO(3)$-module is isomorphic to $\Hc_n$ for some $n$.

\subsection{The Berger space}\label{ssect:BergSpa}

The irreducible representation $\Hc_2$ of $\SO(3)$ has dimension 5, and thus gives rise to a non-standard embedding $\SO(3) \subset \SO(5).$

\begin{definition}
The \emph{Berger space} is the homogeneous space $\SO(5)/\SO(3)$ given by the quotient of $\SO(5)$ by the copy of $\SO(3)$ described above.
\end{definition}

Topologically, the Berger space is a rational homology sphere with $H^4\!\left(\Ber, \mathbb{Z} \right) = \mathbb{Z}_{10}$ \cite{Berger61} and is diffeomorphic to an $S^3$-bundle over $S^4$ \cite{GoKiSh04}.

The Lie algebra $\mathfrak{so}(5)$ decomposes under the action of $\SO(3)$ as
\begin{align*}
\mathfrak{so}(5) = \mathfrak{so}(3) \oplus \Hc_3,
\end{align*}
and it follows that the Berger space is an isotropy irreducible space. Consequently, $\Ber$ carries a unique $\SO(5)$-invariant metric $g$ up to scale, and this metric is Einstein. In addition, it was shown by Berger \cite{Berger61} that $g$ has positive curvature. It has been shown by Shankar \cite{Sha01Isom} that the isometry group of $\left( \Ber, g \right)$ is exactly $\SO(5).$

\subsection{Veronese surfaces in $S^4$}\label{ssect:Veronese}

When working with a homogeneous space $\G / \mathrm{H}$ it is often advantageous to have an explicit geometric realisation of $\G / \mathrm{H}$ as a set acted on transitively by $\G$ with stabiliser $\mathrm{H}.$ Such a description allows for a geometric understanding of the natural objects, for example distinguished submanifolds, associated to the homogeneous space. In this section we give such an an explicit geometric realisation of the Berger space, in terms of \emph{Veronese surfaces} in the 4-sphere $S^4.$ This realisation will then be used to guide and interpret the calculations and results in the remainder of the paper.

The ring of invariant polynomials on the $\SO(3)$-module $\Hc_2$ has two generators: $g \in \text{Sym}^2 \left( \Hc_2^* \right)$ and $\Upsilon \in \text{Sym}^3 \left( \Hc_2^* \right)$. Let $\left( e_1, \ldots, e_5 \right)$ be the basis for $\Hc_2 \cong \R^5$ given by
\begin{equation}\label{eq:H2Base}
\begin{aligned}
&e_1 =  x^2 - \tfrac{1}{2} y^2 -\tfrac{1}{2} z^2, \\
&e_2 =  \sqrt{3} xy, \:\:\:  e_3 =  \sqrt{3} xz,  \\
&e_4 = \tfrac{\sqrt{3}}{2} y^2- \tfrac{\sqrt{3}}{2} z^2, \:\:\:  e_5 =  \sqrt{3} y z.
\end{aligned}
\end{equation}
In terms of this basis, we have
\begin{align*}
	g \left( v, v \right) & = v_1^2 + v_2^2 + v_3^2 + v_4^2 + v_5^2, \\
	\Upsilon \left( v, v, v \right) & = v_1 \left( v_1^2 + \tfrac{3}{2} v_2^2 + \tfrac{3}{2} v_3^2 - 3 v_4^2 - 3 v_5^2 \right) + \tfrac{3\sqrt{3}}{2} v_4 \left( v_2^2 - v_3^2 \right) + 3 \sqrt{3} v_2 v_3 v_5.
\end{align*}

Consider the subset $\Sigma_0$ in $S^4 \subset \R^5$ defined by the equations
\begin{align*}
g \left( v, v \right) =1, \:\:\: \Upsilon \left( v, v, v \right) = -4.
\end{align*}

\begin{prop}\label{prop:VerInv}
	The subset $\Sigma_0 \subset S^4$ is an embedded copy of $\R \mathbb{P}^2$ invariant under the action of $\SO(3) \subset \SO(5).$
\end{prop}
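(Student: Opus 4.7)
Invariance is immediate: $g$ and $\Upsilon$ generate the ring of $\SO(3)$-invariants on $\Hc_2$, so their joint level set $\Sigma_0$ is $\SO(3)$-stable, and the condition $g(v,v)=1$ places $\Sigma_0$ inside the unit sphere $S^4 \subset \Hc_2$.

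To identify $\Sigma_0 \cong \R\mathbb{P}^2$, I would employ the classical $\SO(3)$-equivariant isomorphism $\Hc_2 \cong \mathrm{Sym}^2_0(\R^3)$ of harmonic quadratic forms with traceless symmetric $3\times 3$ matrices via $q(x,y,z) = \mathbf{x}^{\mathrm{T}} M \mathbf{x}$, under which the $\SO(3)$-action becomes conjugation $M \mapsto A M A^{\mathrm{T}}$. The ring of invariants on $\mathrm{Sym}^2_0(\R^3)$ is classically generated by $\mathrm{tr}(M^2)$ and $\det(M)$ (using $\mathrm{tr}(M) = 0$ and Cayley--Hamilton to rewrite the cubic invariant as $\mathrm{tr}(M^3) = 3\det(M)$), so one obtains $g(v,v) = c_1\,\mathrm{tr}(M^2)$ and $\Upsilon(v,v,v) = c_2\,\det(M)$ for nonzero constants $c_1, c_2$ read off directly from the basis (\ref{eq:H2Base}).

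By the spectral theorem, every $\SO(3)$-orbit in $\mathrm{Sym}^2_0(\R^3)$ contains a diagonal matrix, so orbits are labeled by unordered triples of eigenvalues $(\lambda_1, \lambda_2, \lambda_3)$ with $\sum_i \lambda_i = 0$. Fixing $g = 1$ and $\Upsilon = -4$ fixes $\mathrm{tr}(M^2)$ and $\det(M)$, reducing the possible eigenvalue multisets to a finite list. The central quantitative step is to solve the resulting cubic in one eigenvalue and exhibit a double-root solution $(\lambda, \lambda, -2\lambda)$ satisfying both constraints; combined with $\SO(3)$-equivariance, this will show that $\Sigma_0$ is a single two-dimensional $\SO(3)$-orbit.

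Finally, the stabiliser of $M_0 = \mathrm{diag}(\lambda, \lambda, -2\lambda)$ in $\SO(3)$ is the subgroup preserving the $\lambda$-eigenplane, namely the block-diagonal $\O(2) \subset \SO(3)$. Hence $\Sigma_0 = \SO(3)\cdot M_0 \cong \SO(3)/\O(2) \cong \R\mathbb{P}^2$, and compactness of $\SO(3)/\O(2)$ ensures the orbit map is a closed embedding into $S^4$, realising $\Sigma_0$ as the classical Veronese surface. The main obstacle in the plan is the explicit cubic calculation pinning down the double-root pattern with the given numerical values; all other steps follow from classical invariant theory together with the spectral theorem.
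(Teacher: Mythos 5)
Your proposal follows essentially the same route as the paper: both identify $\Hc_2$ with traceless symmetric $3\times 3$ matrices, observe that $g$ and $\Upsilon$ are (up to constants) $\operatorname{tr}(M^2)$ and $\det M$, and conclude that the level set is the single orbit of a matrix with a repeated eigenvalue, hence $\SO(3)/\mathrm{O}(2)\cong\R\mathbb{P}^2$. The paper simply asserts the repeated-eigenvalue description where you flag the explicit cubic/double-root computation as the remaining step, so the two arguments are the same in substance.
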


\begin{proof}
		Identify $\Hc_2$ with the space of traceless symmetric $3 \times 3$ matrices by raising an index. In terms of the basis (\ref{eq:H2Base}), this identification is
		\begin{equation}
		\sum_{i=1}^{5} v_i e_i = \frac{1}{2} \left[ \begin{array}{ccc}
		2 v_1 & \sqrt{3} v_2 & \sqrt{3} v_3 \\
		\sqrt{3} v_2 & -v_1 + \sqrt{3} v_4 & \sqrt{3} v_5 \\
		\sqrt{3} v_3 & \sqrt{3} v_5 & -v_1 - \sqrt{3} v_4
		\end{array} \right].
		\end{equation} 
		The invariant polynomials $g$ and $\Upsilon$ correspond to elementary functions of the eigenvalues of the symmetric matrix. We have
		\begin{equation*}
		\Upsilon = \sigma_3 = 4 \det, \:\:\: g = \tfrac{3}{4} \sigma_2.
		\end{equation*}
		Thus, the subset $\Sigma_0$ is the space of matrices with a repeated positive eigenvalue $1/2.$ The group $\SO(3)$ acts transitively on this space with stabiliser $\mathrm{O}(2).$ Thus, $\Sigma_0 \cong \R \mathbb{P}^2.$
\end{proof}
\begin{remark}
	The subset $\Sigma_0$ may also be characterised as the set of elements of $\Hc_2$ which are the harmonic parts of perfect squares $\left( a_1 x + a_2 y + a_3 z \right)^2$.
\end{remark}
\begin{definition}
	We shall call any embedded surface equivalent to $\Sigma_0$ up to the action of $\SO(5)$ a \emph{Veronese surface} in $S^4.$ We shall call $\Sigma_0$ the \emph{standard Veronese surface} in $S^4.$ 
\end{definition}

Each Veronese surface is an embedded minimal $\R \mathbb{P}^2$ in $S^4$ of constant curvature $1/3.$ The standard Veronese surface $\Sigma_0$ is the image of the immersion $S^2 \to S^4,$
\begin{equation*}
\left( u_1, u_2, u_3 \right) \mapsto \left( u_1^2 - \tfrac{1}{2} u_2^2 -\tfrac{1}{2} u_3^2, \sqrt{3} u_1u_2, \sqrt{3} u_1u_3, \tfrac{\sqrt{3}}{2} u_2^2- \tfrac{\sqrt{3}}{2} u_3^2, \sqrt{3} u_2 u_3 \right),
\end{equation*}
described by Bor\r{u}vka \cite{Boruvka}.

\begin{prop}
The Berger space $\Ber$ is diffeomorphic to the space of Veronese surfaces in $S^4.$
\end{prop}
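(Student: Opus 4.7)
\emph{Plan.} I would construct an explicit $\SO(5)$-equivariant bijection from $\Ber = \SO(5)/\SO(3)$ to the space of Veronese surfaces and then transfer the smooth structure from the former to the latter. Concretely, define $\Phi : \SO(5) \to \{\text{Veronese surfaces in } S^4\}$ by $\Phi(A) = A \cdot \Sigma_0$. By the definition of Veronese surface, every such surface is an $\SO(5)$-translate of $\Sigma_0$, so $\Phi$ is surjective, and it remains to identify its fibres, that is, the stabiliser $H := \{A \in \SO(5) : A \cdot \Sigma_0 = \Sigma_0\}$.

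First, Proposition \ref{prop:VerInv} gives the containment $\SO(3) \subseteq H$, where $\SO(3)$ is the non-standard embedding of Section \ref{ssect:BergSpa}. For the reverse containment, I would bound $H$ from above by using the isometry group of $\Sigma_0$. Consider the restriction homomorphism $\rho : H \to \mathrm{Isom}(\Sigma_0)$, where $\Sigma_0$ carries its induced metric from $S^4$; since this metric has constant curvature $1/3$, one has $\mathrm{Isom}(\Sigma_0) \cong \mathrm{Isom}(\R \mathbb{P}^2) \cong \SO(3)$. I claim $\rho$ is injective: its kernel consists of elements of $\SO(5)$ that fix $\Sigma_0$ pointwise and hence fix the linear span of $\Sigma_0$ in $\R^5$. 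Since $\Sigma_0$ is a non-trivial $\SO(3)$-orbit in the irreducible $\SO(3)$-module $\Hc_2 \cong \R^5$, this span cannot lie in a proper subspace and must therefore be all of $\R^5$, so such an element is the identity.

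Combining the two bounds, $H$ is a closed subgroup of $\SO(5)$ containing the 3-dimensional subgroup $\SO(3)$ and embedding into the 3-dimensional connected group $\mathrm{Isom}(\Sigma_0) \cong \SO(3)$. Consequently the composition $\SO(3) \hookrightarrow H \xrightarrow{\rho} \SO(3)$ is an injective homomorphism of Lie groups of equal dimension, so by connectedness of the target it is an isomorphism. It follows that $H = \SO(3)$, and $\Phi$ descends to a bijection $\overline{\Phi} : \Ber \to \{\text{Veronese surfaces in } S^4\}$. Endowing the right-hand side with the smooth structure pulled back through $\overline{\Phi}$ — equivalently, the smooth structure obtained by realising the set of Veronese surfaces as an $\SO(5)$-orbit inside the space of compact subsets of $S^4$ with the Hausdorff topology — the map $\overline{\Phi}$ becomes a diffeomorphism.

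The main technical point is the stabiliser calculation. The injectivity of $\rho$ rests on the irreducibility of the $\SO(3)$-action on $\Hc_2$, while the identification $\mathrm{Isom}(\Sigma_0) \cong \SO(3)$ rests on the constant-curvature property cited in the paper just after the definition of Veronese surface; once these two ingredients are in hand, the remaining bookkeeping of subgroups and dimensions is routine.
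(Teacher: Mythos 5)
Your proof is correct, and it reaches the key conclusion (that the full $\SO(5)$-stabiliser $H$ of $\Sigma_0$ is exactly the non-standardly embedded $\SO(3)$) by a genuinely different route from the paper. The paper's argument is a one-liner: since $\SO(3)\subseteq H$ by Proposition \ref{prop:VerInv} and the embedding $\SO(3)\subset\SO(5)$ is \emph{maximal}, the only alternatives are $H=\SO(3)$ or $H=\SO(5)$, and the latter is absurd. You instead bound $H$ from above by restricting to $\Sigma_0$: the restriction homomorphism $\rho\colon H\to\mathrm{Isom}(\Sigma_0)$ is injective because a linear map fixing $\Sigma_0$ pointwise fixes its span, which is all of $\Hc_2$ by irreducibility; and $\mathrm{Isom}(\Sigma_0)\cong\mathrm{O}(3)/\{\pm I\}\cong\SO(3)$ because $\Sigma_0$ is a constant-curvature $\R\mathbb{P}^2$. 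The composite $\SO(3)\hookrightarrow H\xrightarrow{\rho}\SO(3)$ is then an injective homomorphism onto an open (hence, by connectedness, full) subgroup, which forces $H=\SO(3)$. What each approach buys: the paper's is shorter but leans on the maximality of the irreducible $\SO(3)\subset\SO(5)$, a true but not entirely elementary representation-theoretic fact (which the paper in any case invokes); yours trades that for the classical constant-curvature property of the Veronese surface plus a routine dimension count, making the stabiliser computation self-contained and more geometric. Your closing remark on transferring the smooth structure (or equivalently realising the set of Veronese surfaces as a compact-group orbit in the Hausdorff metric) is slightly informal but addresses a point the paper passes over in silence, so it is not a gap relative to the paper's standard.
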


\begin{proof}
	By construction, the group $\SO(5)$ acts transitively on the space of Veronese surfaces. By proposition \ref{prop:VerInv}, the $\SO(5)$-stabiliser of the standard Veronese surface contains $\SO(3).$ Since the embedding $\SO(3) \subset \SO(5)$ is maximal, it follows that the stabiliser is in fact equal to $\SO(3).$
\end{proof}

\subsection{Structure equations} 
Let $\Sigma$ be a Veronese surface in $S^4.$ Since $\Sigma$ is $\SO(5)$-equivalent to the standard Veronese surface $\Sigma_0,$ there exists a $g$-orthonormal basis $\left( f_1, \ldots, f_5 \right)$ of $\Hc_2$ for which
\begin{equation}\label{eq:SigAdapt}
\Sigma = \left\lbrace w_i f_i \ \vline\  \begin{aligned} & w_1^2 + w_2^2 + w_3^2 + w_4^2 + w_5^2 = 1 \\ & w_1 \!\left( w_1^2 + \tfrac{3}{2} w_2^2 + \tfrac{3}{2} w_3^2 - 3 w_4^2 - 3 w_5^2 \right) + \tfrac{3\sqrt{3}}{2} w_4 \!\left( w_2^2 - w_3^2 \right) + 3 \sqrt{3} w_2 w_3 w_5 = -4 \end{aligned} \right\rbrace.
\end{equation}

\begin{definition}
	Given a Veronese surface $\Sigma,$ we shall say the frame $\left( f_1, \ldots, f_5 \right) \in \SO(5)$ is \emph{$\Sigma$-adapted} if (\ref{eq:SigAdapt}) is satisfied.
\end{definition}

The fibres of the coset projection $\SO(5) \to \Ber$ over a Veronese surface $\Sigma$ are exactly the $\Sigma$-adapted frames and in this way we may think of $\SO(5)$ as a bundle of adapted frames over $\Ber.$

Let $\mathbf{e}_i$ be the i$^{\textup{th}}$ column function on $\SO(5).$ We have the \emph{first structure equation} on $\SO(5):$
\begin{align}\label{eq:SO5Struct}
d \left( \mathbf{e}_1, \ldots, \mathbf{e}_5 \right) = \left( \mathbf{e}_1, \ldots, \mathbf{e}_5 \right) \mu,
\end{align}
where $\mu$ is the $\mathfrak{so}(5)$-valued left-invariant Maurer-Cartan form on $\SO(5).$ Under the splitting $\mathfrak{so}(5) = \mathfrak{so}(3) \oplus \Hc_3,$ we write
\begin{align*}
\mu = \gamma + \omega,
\end{align*}
where $\gamma$ takes values in $\mathfrak{so}(3)$ and $\omega$ takes values in $\Hc_3.$ Explicitly,
\begin{small}
\begin{align}
\gamma &= \left[ \begin {array}{ccccc} 0 & -\sqrt {3}\gamma_{{3}} & \sqrt {3}\gamma_{{2}} & 0 & 0 \\
\sqrt {3}\gamma_{{3}} & 0 & -\gamma_{{1}} & -\gamma_{{3}} & \gamma_{{2}} \\
-\sqrt {3}\gamma_{{2}} & \gamma_{{1}} & 0 & -\gamma_{{2}} & -\gamma_{{3}} \\
0 & \gamma_{{3}} & \gamma_{{2}} & 0 &-2\gamma_{{1}} \\
0 & -\gamma_{{2}} & \gamma_{{3}} & 2\gamma_{{1}} & 0
\end {array} \right], \label{eq:MCSO5} \\
\omega &= \frac{\sqrt{2}}{3} \left[ \begin {array}{ccccc} 0 & -2 \omega_{{2}} & 2 \omega_{{3}} & -\sqrt {10} \omega_{{4}}  & \sqrt {10} \omega_{{5}}  \\
2 \omega_{{2}} & 0 & -2 \sqrt {2} \omega_{{1}} & \sqrt {3} \omega_{{2}} - \sqrt{	5} \omega_{{6}} & -\sqrt {3} \omega_{{3}} + \sqrt {5} \omega_{{7}} \\
-2\omega_{{3}} & 2 \sqrt {2} \omega_{{1}} & 0 & \sqrt {3} \omega_{{3}} + \sqrt{5} \omega_{{7}} & \sqrt {3} \omega_{{2}} + \sqrt {5} \omega_{{6}} \\
 \sqrt {10} \omega_{{4}}  &-\sqrt {3}\omega_{{2}} + \sqrt {5}\omega_{{6}} & -\sqrt{3} \omega_{{3}} - \sqrt {5} \omega_{{7}} & 0 & \sqrt {2}\omega_{{1}} \\
 -\sqrt {10} \omega_{{5}}  & \sqrt {3}\omega_{{3}}-\sqrt {5}\omega_{{7}} & -\sqrt {3} \omega_{{2}} - \sqrt {5} \omega_{{6}} & -\sqrt {2} \omega_{{1}} & 0
\end {array} \right]. \label{eq:MCSO5om}
\end{align}
\end{small}
The $1$-forms $\omega_{{1}}, \ldots, \omega_{{7}}$ are semi-basic for the projection $\SO(5) \to \Ber,$ while the $1$-forms $\gamma_1, \gamma_2, \gamma_3$ are the connection $1$-forms for the homogeneous $\SO(3)$-structure on $\Ber$.

The Maurer-Cartan equation $d \mu= - \mu \wedge \mu$ implies
\begin{small}
\begin{align}
d \left[ \begin{array}{c}
\omega_1 \\
\omega_2 \\
\omega_3 \\
\omega_4 \\
\omega_5 \\
\omega_6 \\
\omega_7
\end{array} \right] =& - \left[ \begin {array}{ccccccc}  0 & \sqrt {6} \gamma_{{2}} & -\sqrt {6} \gamma_{{3}} & 0 & 0 & 0 & 0 \\
-\sqrt {6} \gamma_{{2}} & 0 & \gamma_{{1}} & -\tfrac{\sqrt{10}}{2} \gamma_{{3}} &-\tfrac{\sqrt{10}}{2} \gamma_{{2}} & 0 & 0 \\
\sqrt {6} \gamma_{{3}} & - \gamma_{{1}} & 0 & \tfrac{\sqrt{10}}{2} \gamma_{{2}} &-\tfrac{\sqrt{10}}{2} \gamma_{{3}} & 0 & 0 \\
0 & \tfrac{\sqrt{10}}{2} \gamma_{{3}} & -\tfrac{\sqrt{10}}{2} \gamma_{{2}} & 0 & 2 \gamma_{{1}} & - \tfrac{\sqrt{6}}{2} \gamma_{{3}} & -\tfrac{\sqrt{6}}{2} \gamma_{{2}} \\
 0 & \tfrac{\sqrt{10}}{2} \gamma_{{2}} & \tfrac{\sqrt{10}}{2} \gamma_{{3}} & -2 \gamma_{{1}} & 0 & \tfrac{\sqrt{6}}{2} \gamma_{{2}} & -\tfrac{\sqrt{6}}{2} \gamma_{{3}} \\
 0 & 0 & 0 & \tfrac{\sqrt{6}}{2} \gamma_{{3}} & -\tfrac{\sqrt{6}}{2} \gamma_{{2}} & 0 & 3\gamma_{{1}} \\
 0 & 0 & 0 & \tfrac{\sqrt{6}}{2} \gamma_{{2}} & \tfrac{\sqrt{6}}{2} \gamma_{{3}} & -3 \gamma_{{1}} & 0 \end {array} \right] \wedge \left[ \begin{array}{c}
 \omega_1 \\
 \omega_2 \\
 \omega_3 \\
 \omega_4 \\
 \omega_5 \\
 \omega_6 \\
 \omega_7
 \end{array} \right] \nonumber \\
 & + \frac{2}{3} \left[ \begin{array}{c}
  - \omega_2 \wedge \omega_3 - \omega_4 \wedge \omega_5 + \omega_6 \wedge \omega_7 \\
 \omega_1 \wedge \omega_3 - \omega_4 \wedge \omega_6 - \omega_5 \wedge \omega_7 \\
 -\omega_1 \wedge \omega_2 + \omega_5 \wedge \omega_6 - \omega_4 \wedge \omega_7 \\
 \omega_1 \wedge \omega_5 + \omega_2 \wedge \omega_6 + \omega_3 \wedge \omega_7 \\
 -\omega_1 \wedge \omega_4 + \omega_3 \wedge \omega_6 + \omega_2 \wedge \omega_7 \\
 -\omega_1 \wedge \omega_7 - \omega_2 \wedge \omega_4 + \omega_3 \wedge \omega_5 \\
 \omega_1 \wedge \omega_6 - \omega_2 \wedge \omega_5 - \omega_3 \wedge \omega_4
 \end{array}
 \right] \label{eq:BerStruct1}
\end{align}
\end{small}
and
\begin{small}
\begin{align}\label{eq:BerStruct2}
\left[ \begin{array}{c}
d \gamma_1 + \gamma_2 \wedge \gamma_3 \\
d \gamma_2 + \gamma_3 \wedge \gamma_1 \\
d \gamma_3 + \gamma_1 \wedge \gamma_2
\end{array} \right] = \frac{2}{9} \left[ \begin{array}{c}
2 \omega_2 \wedge \omega_3 + 4 \omega_4 \wedge \omega_5 + 6 \omega_6 \wedge \omega_7 \\
2 \sqrt{6} \omega_1 \wedge \omega_2 - \sqrt{10} \left( \omega_2 \wedge \omega_5 - \omega_3 \wedge \omega_4 \right) - \sqrt{6} \left( \omega_4 \wedge \omega_7 - \omega_5 \wedge \omega_6 \right) \\
-2 \sqrt{6} \omega_1 \wedge \omega_3 - \sqrt{10} \left( \omega_2 \wedge \omega_4 + \omega_3 \wedge \omega_5 \right) - \sqrt{6} \left( \omega_4 \wedge \omega_6 + \omega_5 \wedge \omega_7 \right)
\end{array} \right],
\end{align}
\end{small}
which we shall refer to as the structure equations of $\Ber.$

\subsubsection{The $\G_2$-structure on $\Ber$}
Consider the 3-form $\tilde{\varphi}$ on $\SO(5)$ defined by
\begin{align}\label{eq:G2StructBer}
\tilde{\varphi} = \omega_{123} + \omega_{145} - \omega_{167} + \omega_{256} + \omega_{257} + \omega_{347} - \omega_{356}
\end{align}
By the structure equations (\ref{eq:BerStruct1}), $\tilde{\varphi}$ is invariant under the action of $\SO(3) \subset \SO(5).$ Consequently, $\tilde{\varphi}$ is the pullback to $\SO(5)$ of a 3-form $\varphi$ on $\Ber.$ The 3-form $\varphi$ defines a $\G_2$-structure on $\Ber.$ The induced metric $g_{\varphi}$ and 4-form $*_{\varphi} \varphi$ on $\Ber$ satisfy
\begin{align*}
\pi^* \left( g_{\varphi} \right) &= \omega_1^2 + \omega_2^2 + \omega_3^2 + \omega_4^2 + \omega_5^2 + \omega_6^2 + \omega_7^2, \\
\pi^* \left( *_{\varphi} \varphi \right) &= \omega_{4567} + \omega_{2367} - \omega_{2345} + \omega_{1357} + \omega_{1346} + \omega_{1256} - \omega_{1247},
\end{align*}
where $\pi$ denotes the projection $\SO(5) \to \Ber.$ The metric $g_{\varphi}$ is $\SO(5)$-invariant and, since $B$ is isotropy-irreducible, it agrees with the metric $g$ of \S\ref{ssect:BergSpa} up to scale.

The structure equations (\ref{eq:BerStruct1}) imply that $\varphi$ satisfies
\begin{align*}
d \varphi = 4 *_{\varphi} \varphi,
\end{align*}
so $\varphi$ defines a \emph{nearly parallel $\G_2$-structure} on $\Ber.$ It follows that the metric $g_{\varphi}$ is Einstein with scalar curvature 42, and that the metric cone over $\Ber$ has holonomy contained in $\mathrm{Spin}(7).$ In fact, the cone over $\Ber$ was the first explicit example of a metric with $\mathrm{Spin}(7)$-holonomy \cite{BryExcept}. For more details on nearly parallel $\G_2$-structures see \cite{FrKaMoSe97}.

\subsection{Associative submanifolds} In this work, we will study a special class of $3$-dimensional submanifolds of the Berger space known as associative submanifolds.

\begin{definition} Let $M^7$ be an oriented $7$-manifold equipped with a $\text{G}_2$-structure $\varphi \in \Omega^3(M)$.  An oriented $3$-dimensional submanifold $N \subset M$ is called an \emph{associative $3$-fold} if:
\begin{equation*}
\varphi|_N = \text{vol}_N.
\end{equation*}
\end{definition}

Two special cases are worth highlighting.  First, if $d\varphi = 0$, then $\varphi$ is a calibration \cite{HarLawCali}, and hence associative $3$-folds in $M$ are area-minimizing.  Second, if $\varphi$ is nearly-parallel, meaning that $d\varphi = 4\ast_\varphi \varphi$, then associative $3$-folds in $M$ are the links of Cayley cones in the metric cone over $M$, and hence are also minimal submanifolds of $M$. In fact, the two special cases just described are exactly the classes of $\G_2$-structures for which every associative 3-fold is minimal \cite{BaMaExcept}.

Associative $3$-folds in $7$-manifolds with nearly-parallel $\text{G}_2$-structures have been studied by Lotay \cite{LotAssoc}, who considers the round $7$-sphere, and Kawai \cite{KawAssoc}, who considers the squashed $7$-sphere.

\subsection{Subgroups and Quotients of $\SO(5)$} Several different homogeneous spaces of $\SO(5)$ will play a role in this work.  As a guide to these, and to fix conventions, we indicate the connected subgroups $\mathrm{H}$ of $\SO(5)$ up to $\SO(5)$-conjugacy in Figure \ref{fig:SO5Grp}. The corresponding diagram of homogeneous spaces $\SO(5)/\mathrm{H}$ is given in Figure \ref{fig:SO5HomSp}.
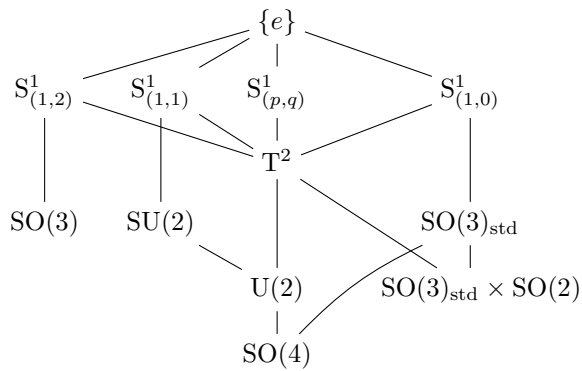
\begin{figure}\caption{Connected subgroups of $\SO(5)$ up to conjugacy}\label{fig:SO5Grp}
	$$\begin{tikzcd}[column sep=small, row sep=small,cramped]
	&                                                       & \{e\} \arrow[lld, no head] \arrow[ld, no head] \arrow[d, no head] \arrow[rd, no head] &                                                                     \\
	{\Sg^1_{(1,2)}} \arrow[rrd, no head] \arrow[dd, no head] & {\Sg^1_{(1,1)}} \arrow[rd, no head] \arrow[dd, no head] & {\Sg^1_{(p,q)}} \arrow[d, no head]                                                      & {\Sg^1_{(1,0)}} \arrow[ld, no head] \arrow[dd, no head]               \\
	&                                                       & \mathrm{T}^2 \arrow[dd, no head] \arrow[rdd, no head]                                 &                                                                     \\
	\mathrm{SO}(3)                                         & \mathrm{SU}(2) \arrow[rd, no head]                    &                                                                                       & \mathrm{SO}(3)_{\text{std}} \arrow[d, no head] \arrow[ldd, no head, bend right = 10] \\
	&                                                       & \mathrm{U}(2) \arrow[d, no head]                                                      & \ \ \, \mathrm{SO}(3)_{\text{std}} \times \mathrm{SO}(2)                   \\
	&                                                       & \mathrm{SO}(4)                                                                        &                                                                    
	\end{tikzcd}$$
\end{figure}

\begin{figure}\caption{Homogeneous spaces of $\SO(5)$}\label{fig:SO5HomSp}
	$$	\begin{tikzcd}[column sep=small, row sep=small,cramped]
	&     & \mathrm{SO}(5) \arrow[lld, no head] \arrow[ld, no head] \arrow[d, no head] \arrow[rd, no head] &  \\
	{\mathrm{SO}(5)/\Sg^1_{(1,2)}} \arrow[rrd, no head] \arrow[dd, no head] & {\mathrm{SO}(5)/\Sg^1_{(1,1)}} \arrow[rd, no head] \arrow[dd, no head] & {\mathrm{SO}(5)/\Sg^1_{(p,q)}} \arrow[d, no head]    & {\mathrm{SO}(5)/\Sg^1_{(1,0)}} \arrow[ld, no head] \arrow[dd, no head] \\
	&   & \text{Gr}_2^+(TS^4) \arrow[dd, no head] \arrow[rdd, no head]    &  \\
	B = \mathrm{SO}(5)/\mathrm{SO}(3)  & S^7 \arrow[rd, no head]   & & T_1(S^4) \arrow[d, no head] \arrow[ldd, no head]                     \\
	&   & \mathbb{CP}^3 \arrow[d, no head]      & \text{Gr}_2^+(\mathbb{R}^5)         \\
	&   & S^4       &                                                                     
	\end{tikzcd}$$
\end{figure}
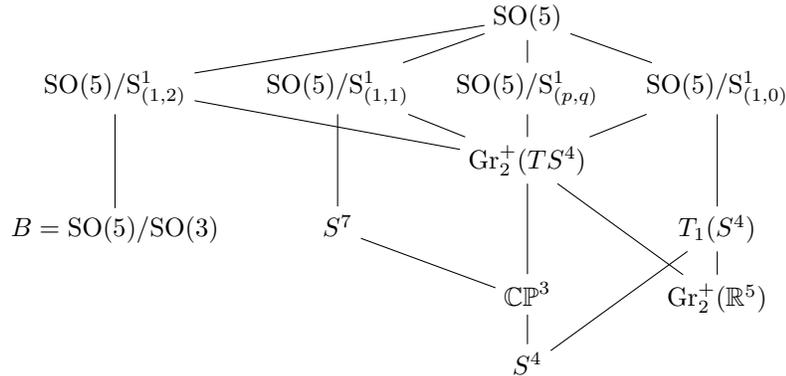

In Figure \ref{fig:SO5Grp}, for $(p,q) \in \mathbb{Z}^2$, $(p,q) \neq (0,0)$, we let  $\Sg^1_{p,q} \leq \SO(5)$ denote the circle subgroup given by
\begin{equation*}
\theta \cdot (e_1, e_2 + ie_3, e_4 + ie_5) = (e_1, e^{ip\theta}(e_2 + ie_3), e^{iq\theta}(e_4 + ie_5)).
\end{equation*}
Each of these circles are subgroups of the maximal torus $\mathrm{T}^2$ of $\SO(5)$ given by
\begin{equation} \label{eq:maxtorus}
(\theta, \phi) \cdot (e_1, e_2 + ie_3, e_4 + ie_5) = (e_1, e^{i\theta}(e_2 + ie_3), e^{i\phi}(e_4 + ie_5)).
\end{equation}
We let $\mathrm{O}(2)_{p,q}$ denote the group generated by $\Sg^1_{p,q}$ and the element $\mathrm{diag} \left(1, 1, -1, 1, -1\right).$

The group $\SO(4) \subset \SO(5)$ is defined to be the identity component of the group fixing the vector $e_1,$ and the subgroup $\mathrm{U}(2) \subset \SO(4)$ is the subgroup fixing the 2-form $e^{23} + e^{45}.$ The subgroup $\mathrm{SU}(2) \subset \mathrm{U}(2)$ is the subgroup fixing $\left( e_2 + i e_3 \right) \wedge \left(e_4 + i e_5 \right).$

The subgroup $\SO(2) \times \SO(3)_{\mathrm{std}} \subset \SO(5)$ is the identity component of the group preserving the 3-plane $\mathrm{span} \left( e_3, e_4, e_5 \right),$ and the subgroup $\SO(3)_{\mathrm{std}} \subset \SO(2) \times \SO(3)_{\mathrm{std}}$ is the subgroup acting trivially on $\mathrm{span} \left(e_1, e_2 \right).$

In Figure \ref{fig:SO5HomSp}, $T_1(S^4)$ denotes the unit tangent bundle of $S^4$, while $\text{Gr}_2^+(\mathbb{R}^5)$ denotes the Grassmannian of oriented $2$-planes in $\mathbb{R}^5$, and $\text{Gr}_2^+(TS^4)$ denotes the Grassmann bundle of oriented tangent $2$-planes to $S^4$.






\subsection{Cohomogeneity-one action of $\SO(4)$}

The action of the subgroup $\SO(4) \subset \SO(5)$ on the Berger space $\Ber$ is cohomogeneity-one, meaning that its principal orbits have codimension 1. This action was first described in the context of manifolds of positive curvature by Verdiani-Podest\`a \cite{PoVe99}, and appears in the classification of simply-connected positively curved cohomogeneity-one manifolds due to Grove-Wilking-Ziller \cite{GWZ08}. Since the nearly parallel $\G_2$-structure $\varphi$ on $\Ber$ defined above is $\SO(5)$-invariant, it is \emph{a fortiori} invariant under this cohomogeneity-one action.

The $\SO(4)$-orbit through the identity coset $\mathrm{Id}_5 \, \SO(3) = \Sigma_0$ is a singular orbit. The curve $u : \R \to \SO(5),$ 
\begin{align*}
u : s \mapsto \begin{small} \left[ \begin{array}{ccccc}
\cos \frac{2 \sqrt{5}}{3} s & 0 & 0 & -\sin \frac{2 \sqrt{5}}{3} s & 0 \\
0 & 0 & 0 & 0 & 0 \\
0 & 0 & 0 & 0 & 0 \\
\sin \frac{2 \sqrt{5}}{3} s & 0 & 0 & \cos \frac{2 \sqrt{5}}{3} s & 0 \\
0 & 0 & 0 & 0 & 0
\end{array} \right], \end{small}
\end{align*}
projects under the map $ \pi: \SO(5) \to \Ber$ to a geodesic orthogonal to all $\SO(4)$-orbits, and the image of the map \begin{align*}
\left[ 0, \pi / \sqrt{5} \right] \times \SO(4) & \to \SO(5) \\
 (s, A) & \mapsto A \, u(s)
\end{align*}
surjects onto $\Ber.$ The $\SO(4)$-stabiliser of the point $\pi \left( u\left( s \right) \right) \in \Ber$ is given by
\begin{equation*}
\left( u(s)^{-1} \SO(4) u(s) \right) \cap \SO(3) = \mathrm{Stab}_{\SO(3)} \left(u(s)^{-1} e_1 \right) \cong \begin{cases}
\mathrm{O}(2)_{1,2} & \text{if} \: s = 0, \pi / \sqrt{5}, \\
\mathbb{Z}_2 \times \mathbb{Z}_2 & \text{otherwise}.
\end{cases}
\end{equation*}
Thus, the group picture for the action of $\SO(4)$ on $\Ber$ is
\begin{align*}
\mathbb{Z}_2 \times \mathbb{Z}_2 \subset \left\lbrace \mathrm{O}(2)_{1,2} , \mathrm{O}(2)_{1,2} \right\rbrace \subset \SO(4).
\end{align*}

 Writing the Maurer-Cartan form of $\SO(4) \subset \SO(5)$ in a manner adapted to the splitting $\mathfrak{so}(4) \cong \mathfrak{su}(2) \oplus \mathfrak{su}(2)$ as
\begin{small}
	\begin{align*}
	\frac{1}{2} \left[ \begin {array}{ccccc} 0&0&0&0&0\\ 0&0&-\mu_{
		{1}}+\nu_{{1}}&-\mu_{{2}}+\nu_{{2}}&-\mu_{{3}}+\nu_{{3}}
	\\ 0&\mu_{{1}}-\nu_{{1}}&0&-\mu_{{3}}-\nu_{{3}}&\mu_
	{{2}}+\nu_{{2}}\\ 0&\mu_{{2}}-\nu_{{2}}&\mu_{{3}}+
	\nu_{{3}}&0&-\mu_{{1}}-\nu_{{1}}\\ 0&\mu_{{3}}-\nu_{
		{3}}&-\mu_{{2}}-\nu_{{2}}&\mu_{{1}}+\nu_{{1}}&0\end {array} \right],
	\end{align*}
\end{small}
the pullback of the Maurer-Cartan form of $\SO(5)$ to $\left[ 0, \pi /3 \right] \times \SO(4)$ is given by
\begin{small}
	\begin{align*}
	\frac{1}{2} \left[ \begin {array}{ccccc} 0 & \sin \frac{2 \sqrt{5}}{3} s \left( \mu_2 - \nu_2 \right) & \sin  \frac{2 \sqrt{5}}{3} s \left( \mu_{{3}} + \nu_{{3}} \right) &- \frac{2 \sqrt{5}}{3} \, d s & -\sin \frac{2 \sqrt{5}}{3} s \left( \mu_{{1}} + \nu_{{1}} \right) \\
	\sin  \frac{2 \sqrt{5}}{3} s  \left( -\mu_{{2}} + \nu_2 \right) & 0 & -\mu_{{1}}+\nu_{{1}} & \cos \frac{2 \sqrt{5}}{3} s  \left( -\mu_{{2}} + \nu_{{2}} \right) & -\mu_{{3}} + \nu_{{3}} \\
	-\sin  \frac{2 \sqrt{5}}{3} s \left( \mu_{{3}} + \nu_{{3}} \right) & \mu_{{1}}-\nu_{{1}} & 0 & - \cos \frac{2 \sqrt{5}}{3} s \left( \mu_{{3}} + \nu_{{3}} \right) & \mu_{{2}} + \nu_{{2}} \\
	\frac{2 \sqrt{5}}{3} d  s & \cos \frac{2 \sqrt{5}}{3} s \left( \mu_{{2}} -\nu_{{2}} \right) & \cos \frac{2 \sqrt{5}}{3} s \left( \mu_{{3}} + \nu_{{3}} \right) & 0 & \cos  \frac{2 \sqrt{5}}{3} s  \left( -\mu_{{1}}- \nu_{{1}} \right) \\ 
	\sin \frac{2 \sqrt{5}}{3} s \left( \mu_{{1}} + \nu_{{1}} \right)  & \mu_{{3}}-\nu_{{3}} & -\mu_{{2}}-\nu_{{2}} & \cos  \frac{2 \sqrt{5}}{3} s \left( \mu_{{1}} + \nu_{{1}} \right) & 0 \end {array} \right].
	\end{align*}
\end{small}

Reparametrising $s = (3 \sqrt{5} / 10 ) t$, on $\left[ 0, \pi /3 \right] \times \SO(4),$ we have that
\begin{equation}\label{eq:PrinOrbitsoms}
\begin{aligned}
\omega_{{1}} &= \tfrac{3}{20} \left( \left( 2 - \cos t \right) \mu_1 - \left( 2 + \cos t \right) \nu_1 \right), \\
\omega_{{2}} &= \tfrac{3 \sqrt{2}}{40} \left( \left( \sqrt{3} - \sqrt{7} \cos \left( t - \phi \right) \right) \mu_2 + \left( \sqrt{3} + \sqrt{7} \cos \left( t - \phi \right) \right)  \nu_2 \right), \\
\omega_{{3}} &= \tfrac{3 \sqrt{2}}{40} \left( \left( \sqrt{3} - \sqrt{7} \cos \left( t + \phi \right) \right) \mu_3 - \left( \sqrt{3} + \sqrt{7} \cos \left( t + \phi \right) \right)  \nu_3 \right), \\
\omega_4 &= \tfrac{3 \sqrt{5}}{10} d t, \\
\omega_5 &= - \tfrac{3 \sqrt{5}}{20} \sin t \left( \mu_1 - \nu_1 \right) \\
\omega_6 &= \tfrac{3 \sqrt{10}}{40} \left( \left( 1 + \cos t \right) \mu_2 + \left( 1 - \cos t \right) \nu_2 \right) \\
\omega_7 &= \tfrac{3 \sqrt{10}}{40} \left( \left( -1 - \cos t \right) \mu_3 + \left( 1 - \cos t \right) \nu_3 \right),
\end{aligned}
\end{equation}
where $\phi = \arctan \left( {2} / {\sqrt{3}} \right).$ Pulling back equation (\ref{eq:G2StructBer}) and using the above formulas gives an explicit expression for the nearly parallel $\G_2$-structure $\varphi$ on $\Ber$ as a curve in the space of invariant 3-forms on the principal orbits $\SO(4) / \mathbb{Z}^2_2.$

\subsubsection{$\SU(3)$-structure on principal orbits}

The group $\G_2$ acts transitively on the 6-sphere $S^6$ with stabiliser $\SU(3),$ and thus any hypersurface $X^6$ in a manifold $M$ with $\G_2$-structure has an induced $\SU(3)$ structure $ \left(\Omega, \Re \Upsilon \right) \in \Omega^2\left(X \right) \oplus \Omega^3 \left( X \right),$ given explicitly by
\begin{equation*}
\Omega = v \lrcorner \varphi, \:\:\: \Re \Upsilon = \varphi |_X,
\end{equation*}
where $v$ is a unit normal vector field to $X.$

The torsion of the induced $\SU(3)$-structure on the $X$ is determined by the torsion of the ambient $\G_2$-structure and the second fundamental form of the inclusion $X \subset M$. When the ambient $\G_2$-structure is nearly parallel, the induced $\SU(3)$-structure on a hypersurface is of a type called \emph{nearly half-flat} \cite{FIMU08}, meaning that $d \Re \Upsilon = - 2 \Omega \wedge \Omega.$ Thus, the principal orbits $\SO(4) / \mathbb{Z}_2^2$ of the $\SO(4)$-action on $\Ber$ all carry nearly half-flat $\SU(3)$-structures. Explicitly, the $\SU(3)$-structure on a principal orbit $\left\lbrace s = \mathrm{const.} \right\rbrace$ is given by the forms
\begin{equation}\label{eq:PrinOrbitStruct}
\begin{aligned}
\Omega &= - \omega_{15} - \omega_{26} - \omega_{37}, \\
\Re \Upsilon &= \omega_{123} - \omega_{167} + \omega_{257} - \omega_{356},
\end{aligned}
\end{equation}
restricted to this orbit.

\begin{prop}\label{prop:PrinOrbSLag}
	Any special Lagrangian submanifold (of phase 0) of a principal orbit $\SO(4) / \mathbb{Z}_2^2 \subset \Ber$ is an associative submanifold of $\Ber.$
\end{prop}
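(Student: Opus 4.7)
The plan is to reduce the proposition to the single pointwise identity $\varphi|_X = \Re \Upsilon$ that defines the induced $\SU(3)$-structure on a hypersurface of a $\G_2$-manifold, combined with the standard linear-algebraic description of phase-$0$ special Lagrangian planes inside an $\SU(3)$-module. The crucial observation is that no information whatsoever about the torsion of either structure is required: both the associative condition and the phase-$0$ special Lagrangian condition are first-order algebraic conditions on the tangent planes.

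First, I would recall the pointwise fact that an oriented $3$-plane $P$ inside a standard $\SU(3)$-module $\left(\R^6, \Omega, \Upsilon \right)$ satisfies $\Omega|_P = 0$ and $\Im \Upsilon|_P = 0$ if and only if $\Re \Upsilon|_P = \mathrm{vol}_P$, where $\mathrm{vol}_P$ is the volume form induced by the compatible metric and the orientation. This is a purely linear-algebraic statement, so it applies verbatim to the nearly half-flat $\SU(3)$-structure on a principal orbit $X = \SO(4)/\mathbb{Z}_2^2 \subset \Ber$ given by \eqref{eq:PrinOrbitStruct}. Consequently, if $L \subset X$ is a phase-$0$ special Lagrangian submanifold, then $\Re \Upsilon|_L = \mathrm{vol}_L$ as $3$-forms on $L$, with volume form computed in the metric induced from $X$.

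The proposition now follows from a one-line chain of restrictions. Since the inclusion $X \hookrightarrow \Ber$ is isometric, the volume form of $L$ as a submanifold of $X$ coincides with its volume form as a submanifold of $\Ber$. Using the relation $\varphi|_X = \Re \Upsilon$ coming from the definition of the hypersurface $\SU(3)$-structure, we then obtain
\[
\varphi|_L \,=\, \left( \varphi|_X \right)\bigr|_L \,=\, \Re \Upsilon|_L \,=\, \mathrm{vol}_L,
\]
which is precisely the condition for $L$ to be associative in $\Ber$.

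I do not anticipate any serious obstacle: the argument is essentially tautological once the relation $\varphi|_X = \Re \Upsilon$ is invoked. The only small point worth verifying is compatibility of orientations, namely that the orientation on $L$ calibrated by $\Re \Upsilon$ is the same one calibrated by $\varphi$; this is a matter of convention and can be read off from comparing the expression for $\Re \Upsilon$ in \eqref{eq:PrinOrbitStruct} with the expression for $\tilde{\varphi}$ in \eqref{eq:G2StructBer}.
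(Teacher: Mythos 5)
Your proposal is correct and follows essentially the same route as the paper: both arguments reduce to the observation that a phase-$0$ special Lagrangian satisfies $\Re \Upsilon|_L = \mathrm{vol}_L$ and then invoke the defining relation $\Re \Upsilon = \varphi|_X$ for the induced hypersurface $\SU(3)$-structure to conclude $\varphi|_L = \mathrm{vol}_L$. Your additional remarks on the pointwise linear algebra and orientation conventions are fine but not substantively different from what the paper does.
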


\begin{proof}
	This is true for any special Lagrangian submanifold in a hypersurface of a manifold with $\G_2$-structure, as we now show. An oriented 3-dimensional submanifold $N$ of a 6-manifold $X$ with $\SU(3)$-structure $\left(\Omega, \Re \Upsilon \right)$ is special Lagrangian (of phase 0) if and only if $\Re \Upsilon |_N = \mathrm{vol}|_N.$ If the $\SU(3)$-structure on $X$ is induced from an inclusion $X \subset M$ of $X$ as a hypersurface in a 7-manifold $M$ with $\G_2$-structure $\varphi,$ then $\Re \Upsilon = \varphi |_X.$ Thus, $\varphi |_N = \mathrm{vol}_N,$ so $N$ is an associative submanifold of $M.$
\end{proof}

Proposition \ref{prop:PrinOrbSLag} is not of direct use in constructing associative submanifolds of $\Ber,$ as typically there will be obstructions to the existence of special Lagrangian submanifolds in the principal orbits arising from the torsion of the induced $\SU(3)$-structure \cite{BaMaSLag}. In particular, it is possible to compute from equations (\ref{eq:PrinOrbitsoms}) and (\ref{eq:PrinOrbitStruct}) that the induced $\SU(3)$-structure on a principal orbit is never nearly K\"ahler.

\section{Ruled associative submanifolds}\label{sect:Ruled}

A natural condition on a submanifold is that it be ruled by some special class of curves in the ambient space. In the context of calibrated geometry, special Lagrangian submanifolds of $\mathbb{C}^3$ ruled by lines have been studied by Bryant \cite{Bry06} and Joyce \cite{Joyce02Ruled}. Fox considers coassociative cones in $\R^7$ and Cayley cones in $\R^8$ ruled by 2-planes \cites{Fox2008Cayley,Fox2007ConesPlanes}. Lotay has also studied coassociative submanifolds of $\R^7$ and Cayley submanifolds of $\R^8$ ruled by 2-planes, as well as special Lagrangians in $S^6$ and associative submanifolds of $S^7$ ruled by circles \cites{LotAssoc, LotLag11}. In this section, we shall apply similar techniques to the Berger space $\Ber.$ 

Unlike in the cases described above, where the ambient manifold is a Euclidean space or a sphere, it is not obvious what the appropriate class of ruling curves should be. Our first step is then to describe the ruling curves we shall consider. Consider the distribution $\mathcal{D}$ on $\SO(5)$ given by
\begin{align*}
\mathcal{D} = \text{ker} \left( \omega_2, \omega_3, \ldots, \omega_7, \gamma_2, \gamma_3 \right).
\end{align*}
 By the structure equations (\ref{eq:BerStruct1}), $\mathcal{D}$ is a Frobenius system.  In fact, using the structure equation (\ref{eq:SO5Struct}), we can explicitly describe the integral surface of $\mathcal{D}$ passing through the frame $\left( f_1, \ldots, f_5 \right) \in \SO(5)$: it is the image of the map $T^2 \to \SO(5)$ given by
	 \begin{equation}\label{eq:D-Surf}
	 \begin{aligned}
	 \left( \theta, \phi \right) \mapsto \left( f_1, \cos(\theta + 2 \phi ) f_2 + \sin(\theta + 2 \phi ) f_3, -\sin(\theta + 2 \phi ) f_2 + \cos(\theta + 2 \phi ) f_3, \right. \\
	 \left. \cos(2 \theta - \phi ) f_4 + \sin(2 \theta - \phi ) f_5, -\sin(2 \theta - \phi ) f_4 + \cos(2 \theta - \phi ) f_5 \right).
	 \end{aligned}
	 \end{equation}
In particular, we see that the (connected, maximal) integral surfaces of $\mathcal{D}$ in $\SO(5)$ (hereafter called \emph{$\mathcal{D}$-surfaces}) are precisely the cosets of the maximal torus $\mathrm{T}^2$ in $\SO(5)$ chosen in (\ref{eq:maxtorus}).  Said another way, the $\mathcal{D}$-surfaces are precisely the fibers of the map
\begin{equation}\label{eq:lambda-map}
\begin{aligned} 
\lambda \colon \SO(5) & \to \text{Gr}_2^+(TS^4) \cong \SO(5)/\mathrm{T}^2 \\
\lambda(f_1, \ldots, f_5) & = (f_1, \text{span}(f_2, f_3))
\end{aligned}
\end{equation}

 The $\mathcal{D}$-surfaces in $\SO(5)$ project to curves in the Berger space $\Ber$ that we shall call $\C$-curves. We will consider $\mathcal{C}$-curves as oriented.  We will see shortly (Proposition \ref{prop:Ccurves}) that the $\C$-curve given by the projection of the $\mathcal{D}$-surface (\ref{eq:D-Surf}), for example, is the family of Veronese surfaces $\Sigma$ that satisfy $f_1 \in \Sigma$ and $T_{f_1}\Sigma = \text{span}(f_2, f_3)$.  We also note that every $\C$-curve is an orbit of the subgroup $S^1_{(-2,1)} \leq \SO(5)$, or of one of its conjugates.

 \begin{definition}\label{defn:ruled}
 	An associative submanifold of $\Ber$ is said to be \emph{ruled} if it is foliated by $\C$-curves.
 \end{definition} 
 
 \begin{remark}
 	The $\SO(3)$-structure on $\Ber$ naturally identifies each tangent space of $\Ber$ with the $\SO(3)$-module $\mathcal{H}_3.$ The action of $\SO(3)$ on $\Hc_3$ preserves the 3-dimensional cone of harmonic cubics that are the harmonic parts of a perfect cube $(a_1 x + a_2y + a_3)^3$.  Explicitly, these are the harmonic cubics of the form
 	\begin{equation}\label{eq:Celts}
 	\left( a_1 x + a_2 y + a_3 z \right)^3 - \tfrac{3}{5} \left( a_1^2 + a_2^2 + a_3^2 \right) \left( a_1 x + a_2 y + a_3 z \right) \left( x^2 + y^2 +z ^2 \right)
 	\end{equation}
 	for $\left( a_1, a_2, a_3 \right) \in \R^3.$ Thus, the tangent bundle $T \Ber$ contains an $\SO(5)$-invariant subset $\mathcal{C}$ consisting of the tangent vectors identified with the elements of the form (\ref{eq:Celts}). The $\mathcal{C}$-curves in $\Ber$ are simply the geodesics that are everywhere tangent to $\mathcal{C}.$ 
 \end{remark}
 
 \begin{prop}\label{prop:Ccurves}
 	The space of $\C$-curves in $\Ber$ is diffeomorphic to the flag manifold $\SO(5)/\mathrm{T}^2 \cong \text{Gr}_{2}^+\!\left( T S^4 \right)$.  An explicit $\SO(5)$-equivariant diffeomorphism is given by the map
 	\begin{align*}
 		\Gamma \colon \text{Gr}_2^+(TS^4) & \to \{\mathcal{C}\text{-curves in } \Ber \} \\
		\Gamma(p , E) & = \left\lbrace \Sigma \in \Ber \mid p \in \Sigma, \: T_p \Sigma = E \right\rbrace.
 	\end{align*}
 \end{prop}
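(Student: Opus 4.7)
The plan is to exploit that both the domain $\Gr_2^+(TS^4) = \SO(5)/\mathrm{T}^2$ and the target $\{\mathcal{C}\text{-curves in } \Ber\}$ are naturally $\SO(5)$-homogeneous, while the map $\Gamma$ is manifestly $\SO(5)$-equivariant from its definition. It will then suffice to verify, for a single base pair $(p_0, E_0) \in \Gr_2^+(TS^4)$, that $\Gamma(p_0, E_0)$ is a $\mathcal{C}$-curve, together with a matching of stabilizers.

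First I will introduce the incidence variety
\begin{equation*}
I = \left\{ (\Sigma, p, E) \in \Ber \times \Gr_2^+(TS^4) \ : \ p \in \Sigma, \ T_p \Sigma = E \right\},
\end{equation*}
which is a $\mathbb{RP}^2$-bundle over $\Ber$ via $\Sigma$ and hence connected of dimension $9$. The fiber of the other projection $I \to \Gr_2^+(TS^4)$ over $(p, E)$ is exactly $\Gamma(p, E)$. The hard part will be showing that $\SO(5)$ acts transitively on $I$. I would approach this by fixing a base triple $(\Sigma_0, p_0, E_0) \in I$ and computing its stabilizer: it is contained in $\SO(3) = \mathrm{Stab}(\Sigma_0)$, and within $\SO(3)$ it is the subgroup fixing the oriented pair $(p_0, E_0)$, which by the matrix description of Proposition \ref{prop:VerInv} reduces to $\SO(2) = \mathrm{T}^2 \cap \SO(3)$. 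The $\SO(5)$-orbit therefore has dimension $10 - 1 = 9 = \dim I$, so is open in $I$, and connectedness of $I$ forces transitivity.

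Transitivity implies that $\Gamma(p_0, E_0)$ is a single orbit of $\mathrm{Stab}_{\SO(5)}(p_0, E_0) = \mathrm{T}^2$ in $\Ber$, namely $\mathrm{T}^2 \cdot \Sigma_0$; comparison with the explicit formula (\ref{eq:D-Surf}) identifies this with the $\mathcal{C}$-curve $C_0 = \pi(\mathrm{T}^2)$. To conclude that $\Gamma$ is bijective, I would argue that the $\SO(5)$-stabilizer of $C_0$ coincides with $\mathrm{T}^2$: any element preserving $C_0$ permutes its member surfaces and therefore fixes their unique common incidence datum $(p_0, E_0)$, the uniqueness following because two distinct Veronese surfaces in $C_0$ generically intersect only at $p_0$ (the lone fixed point in $\Sigma_0$ of a generic element of $\mathrm{T}^2$). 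Hence $\Gamma$ is an equivariant bijection between homogeneous spaces with matching isotropy, and so a diffeomorphism.
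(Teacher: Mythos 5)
Your approach is essentially the paper's: the authors likewise introduce the incidence space $L = \{(\Sigma,p,\mathrm{o})\}$ (your $I$), observe that $\SO(5)$ acts transitively on it with circle stabiliser $\Sg^1_{(1,2)} = \mathrm{T}^2\cap\SO(3)$, and read off $\Gamma(p,E)$ as the $\pi$-image of a $\lambda$-fibre, i.e.\ of a coset of $\mathrm{T}^2$, hence a $\mathcal{C}$-curve. Two points in your write-up need repair. First, the fibre of $I \to B$ over $\Sigma$ is the orientation double cover of $\Sigma\cong\R\mathbb{P}^2$, i.e.\ an $S^2$ rather than an $\R\mathbb{P}^2$ (the datum $E$ carries an orientation while $\Sigma$ is non-orientable); this does not affect the connectedness or the dimension count you use. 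Second, and more substantively, the inference ``the orbit is open and $I$ is connected, therefore the action is transitive'' is not valid as stated: an open orbit in a connected space need not exhaust it (e.g.\ $\mathrm{GL}^+(2,\R)$ acting on $\R^2$ has an open orbit and a fixed point). The gap is easy to close: either observe that \emph{every} point of $I$ has a $1$-dimensional stabiliser, so every orbit is open and hence each is also closed, forcing a single orbit; or argue directly that $\mathrm{Stab}_{\SO(3)}(p_0)=\mathrm{O}(2)$ interchanges the two orientations of $T_{p_0}\Sigma_0$, so $\SO(3)$ is transitive on the fibre of $I\to B$ over $\Sigma_0$ and transitivity on $I$ follows from transitivity on $B$. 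With that repaired, your stabiliser-matching argument for bijectivity is a sensible way of making precise what the paper compresses into ``by construction, the correspondence is bijective.''
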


\begin{proof}
	The space of $\C$-curves in $\Ber$ is in bijection with the space of $\mathcal{D}$-surfaces in $\SO(5)$, which in turn is parametrized by $\SO(5)/\mathrm{T}^2$.
	
	We now show that each $\Gamma(p,E)$ is a $\C$-curve.  Let 
	\begin{equation*}
		L = \{ (\Sigma, p, \mathrm{o}) \in \Ber \times S^4 \times \mathbb{S} \left( \Lambda^2 \left( T^* S^4 \right) \right) \mid p \in \Sigma, \:\: \mathrm{o} \in \mathbb{S} \left( \Lambda^2 \left( T^*_p \Sigma \right) \right) \}.
	\end{equation*}
Note that $\SO(5)$ acts transitively on $L$ with stabiliser $\Sg^1_{(1,2)}$.  Letting $q_1 \colon L \to \Ber$ denote $q_1(\Sigma, p, \mathrm{o}) = \Sigma$ and $q_2 \colon L \to \text{Gr}_2^+(TS^4)$ denote $q_2(\Sigma, p, \mathrm{o}) = \left( T_p\Sigma, \mathrm{o} \right)$, we have a commutative diagram:
\begin{equation} \label{eq:DoubleFib}
\begin{tikzcd}
   & \SO(5) \arrow[d] \arrow[ldd, "\pi"'] \arrow[rdd, "\lambda"] &     \\
   & L \arrow[ld, "q_1"] \arrow[rd, "q_2"']   &     \\
\Ber &  & \text{Gr}_2^+(TS^4)
\end{tikzcd}
\end{equation}
For each $(p,E) \in \text{Gr}_2^+(TS^4)$, we now see that
\begin{align*}
\Gamma(p,E) = \left\{\Sigma \in \Ber \mid p \in \Sigma , \: T_p\Sigma = E\right\} & = (q_1 \circ q_2^{-1})(p,E) = (\pi \circ \lambda^{-1})(p,E).
\end{align*}
Thus, $\Gamma(p,E)$ is the $\pi$-image of the $\lambda$-fiber over $(p,E)$, hence is a $\mathcal{C}$-curve in $\Ber$.  By construction, the correspondence $(p,E) \mapsto \Gamma(p,E)$ is bijective. 
\end{proof}



 To study associatives in $\Ber$ ruled by $\C$-curves, we consider the double fibration of $\SO(5)$ given in (\ref{eq:DoubleFib}).  In view of Proposition \ref{prop:Ccurves}, we expect that if $S$ is a generic immersed surface in $\text{Gr}_{2}^+\! \left( T S^4 \right)$, then $(\pi \circ \lambda^{-1})(S)$ will be a ruled immersed 3-submanifold of $\Ber$.  We now clarify this point and fix notation.

Let $S \to \text{Gr}_{2}^+\! \left( T S^4 \right)$ be an immersed surface.  Recall the map $\lambda : \SO(5) \to \SO(5)/\mathrm{T}^2$ defined in (\ref{eq:lambda-map}). The structure equation (\ref{eq:SO5Struct}) gives
	\begin{align*}
	d \mathbf{e}_1 =& -\mu_{12} \mathbf{e}_2 - \mu_{13} \mathbf{e}_3 - \mu_{14} \mathbf{e}_4 - \mu_{15} \mathbf{e}_5, \\
	d \left( \mathbf{e}_2 \wedge \mathbf{e}_3 \right) =& - \mu_{13} \mathbf{e}_1 \wedge \mathbf{e}_2 + \mu_{12} \mathbf{e}_{1} \wedge \mathbf{e}_3 - \mu_{34} \mathbf{e}_2 \wedge \mathbf{e}_4 - \mu_{35} \mathbf{e}_2 \wedge \mathbf{e}_5 \\
	& + \mu_{24} \mathbf{e}_3 \wedge \mathbf{e}_4 + \mu_{25} \mathbf{e}_3 \wedge \mathbf{e}_5,
	\end{align*}
so the eight 1-forms that appear on the right-hand-side of these equations are $\lambda$-semibasic.  Let $g_{S}$ denote the restriction of the $\SO(5)$-invariant metric on $\text{Gr}_2^+(TS^4)$ given by 
	 \begin{equation*}
	\textstyle \frac{1}{2}\left( \mu_{12}^2 + \mu_{13}^2 + \mu_{14}^2 + \mu_{15}^2 + \mu_{24}^2 + \mu_{25}^2 + \mu_{34}^2 + \mu_{35}^2\right)
	 \end{equation*}
to $S$.  Let $\mathcal{F}(S)$ denote the $\Sg^1$-bundle of $g_{S}$-orthonormal frames on $S$, let $\mathcal{B}(S)$ denote the pullback of the $\mathrm{T}^2$-bundle $\SO(5) \to \text{Gr}_{2}^+\! \left( T S^4 \right)$ to $S$, and denote $\mathcal{F}(S) \times_S \mathcal{B}(S)$ by $\mathcal{G}(S).$ Then $\mathcal{G}(S)$ is a principal $\mathrm{T}^3$-bundle over $S$.

It is straightforward to check that the image of natural map $\mathcal{G}(S) \to \Ber$ is exactly $(\pi \circ \lambda^{-1})(S)$, and is a 3-dimensional $\mathcal{C}$-ruled submanifold on the locus of points $p$ where the linear map $\left( \omega_{{1}}, \ldots, \omega_{{7}} \right) : T_p \mathcal{G}(S) \to \R^7$ is injective.
\begin{equation*}
\begin{tikzcd}
\mathcal{G}(S) \arrow[d] \arrow[r] & \mathcal{B}(S) \arrow[d] \arrow[r] & \text{SO}(5) \arrow[d, "\lambda"'] \arrow[rd, "\pi"] &                           \\
S \arrow[r]                                                & S \arrow[r]                        & \text{Gr}_2^+(TS^4)                                  & \Ber
\end{tikzcd}
\end{equation*}

We now describe a sort of inverse to this construction.  Let $N \to \Ber$ be a ruled $3$-submanifold of $\Ber$.  Let $\mathcal{F}_{\text{ON}}(N)$ be the orthonormal coframe bundle of $N$ with respect to the induced metric on $N,$ and denote the tautological $\R^3$-valued 1-form on $\mathcal{F}_{\text{ON}}(N)$ by $(\alpha_1, \alpha_2, \alpha_3)$. Let $\mathcal{B}(N)$ denote the pullback of the $\SO(3)$-bundle $\pi: \SO(5) \to \Ber$ to $N$. 

Since $N$ is ruled, there is a $\C$-curve through each point $\Sigma \in N$ that remains in $N$. Let $V$ denote the unit vector field on $N$ that is tangent to the $\C$-ruling. Define a subbundle $\mathcal{F}'(N)$ of $\mathcal{F}_{\text{ON}}(N) \times_N \mathcal{B}(N)$ by letting the fibre over $\Sigma \in N$ be
	\begin{multline*}
	\mathcal{F}'(N)_{\Sigma} = \left\lbrace \left( u, \left( f_1, \ldots, f_5 \right) \right) \in \mathcal{F}_{\text{ON}}(N)_{\Sigma} \times \pi^{-1} \left( \Sigma \right) \mid u_* \left( V \right) = \left(1, 0, 0 \right),\: \right. \\ \left. \text{the ruling curve through $\Sigma$ is given by} \:\: \Gamma\!\left( f_1, \text{span}(f_2, f_3) \right) \right\rbrace.
	\end{multline*}
	The bundle $\mathcal{F}'(N)$ is a principal $\mathrm{T}^2$-bundle over $N$.  One can check that the image of the natural map $\mathcal{F}'(N) \to \SO(5)$ is the $\lambda$-preimage of a surface $S \subset \text{Gr}_2^+(TS^4)$, and hence $N = (\pi \circ \lambda^{-1})(S)$.
\begin{equation*}
\begin{tikzcd}
\mathcal{F}'(N) \arrow[r, hook] \arrow[d] & \mathcal{F}_{\text{ON}}(N) \times_N \mathcal{B}(N) \arrow[d] \arrow[r] & \mathcal{B}(N) \arrow[d] \arrow[r] & \text{SO}(5) \arrow[d, "\pi"'] \arrow[rd, "\lambda"] &                     \\
N \arrow[r, no head]                      & N \arrow[r, no head]                                                   & N \arrow[r]                        & \Ber                            & \text{Gr}_2^+(TS^4)
\end{tikzcd}
\end{equation*}
	
The following theorem gives conditions on the surface $S \to \text{Gr}_2^+(TS^4)$ that ensure the corresponding ruled $3$-submanifold $N \to \Ber$ is associative.

\begin{thm}\label{thm:RuledAssoc}
	There is an $\SO(5)$-invariant (non-integrable) almost complex structure $J$ on $\mathrm{Gr}_2^+(TS^4)$ such that:
	\begin{enumerate}
		\item Any ruled associative submanifold of $\Ber$ is locally the $\pi \circ \lambda^{-1}$-image of a $J$-holomorphic curve $\gamma : S \to \mathrm{Gr}_2^+(TS^4)$. 
		\item For each $J$-holomorphic curve $\gamma : S \to \mathrm{Gr}_2^+(TS^4)$ not locally equivalent to the Gauss lift of a Veronese surface, there is a dense subset $S^{\circ} \subset S$ such that $\pi \circ \lambda^{-1} \circ \gamma \left( S^{\circ} \right)$ is a $\C$-ruled associative submanifold of $\Ber.$
	\end{enumerate}
\end{thm}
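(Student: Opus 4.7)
My approach is to exploit the double fibration $\Ber \xleftarrow{\pi} \SO(5) \xrightarrow{\lambda} \gra$ to translate the associative PDE on a ruled 3-fold $N \subset \Ber$ into a first-order equation on the associated surface $S \subset \gra$. Working on the $\mathrm{T}^3$-bundle $\mathcal{G}(S) \to S$ constructed above, the ruling direction on $N = (\pi\circ\lambda^{-1})(S)$ is dual to $\omega_1$, so $\omega_1$ is part of the coframe on $N$ while $\omega_2,\ldots,\omega_7$ restrict to $1$-forms on the $2$-dimensional base $S$. The first step is to split the $\G_2$-form (\ref{eq:G2StructBer}) as
\begin{equation*}
\varphi = \omega_1 \wedge (\omega_{23} + \omega_{45} - \omega_{67}) \;+\; (\omega_{256} + \omega_{257} + \omega_{347} - \omega_{356}),
\end{equation*}
and observe that the four terms in the second group pull back to $\mathcal{G}(S)$ as wedges of three $1$-forms from the $2$-dimensional surface $S$, and therefore vanish identically. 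The associative equation $\varphi|_N = \mathrm{vol}_N$ thus reduces to the single equation $(\omega_{23} + \omega_{45} - \omega_{67})|_S = \mathrm{vol}_S$.

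Next, I would verify that $\Omega := \omega_{23} + \omega_{45} - \omega_{67}$ is both $\mathrm{T}^2$-invariant and $\lambda$-horizontal, so that it descends to an $\SO(5)$-invariant $2$-form (still denoted $\Omega$) on $\gra$; both properties are direct computations from the structure equations (\ref{eq:MCSO5})--(\ref{eq:MCSO5om}) and the description (\ref{eq:maxtorus}) of the maximal torus. Paired with the $\SO(5)$-invariant metric on $\gra$ inherited from the Berger metric, $\Omega$ has unit pointwise norm and hence determines a unique orthogonal almost complex structure $J$ having $\Omega$ as K\"ahler form. Non-integrability of $J$ will then follow by exhibiting a non-vanishing $(0,2)$-component of $d\Omega$ via (\ref{eq:BerStruct1}). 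Since $\Omega|_S = \mathrm{vol}_S$ is precisely the $J$-holomorphicity condition for an oriented surface in an almost Hermitian manifold, the reduction above proves part (1).

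For part (2), the converse implication is immediate at points where the natural map $\mathcal{G}(S) \to \Ber$ is an immersion: by the computation above, the image is locally a $\C$-ruled $3$-fold satisfying $\varphi|_N = \mathrm{vol}_N$. The non-immersion locus is the closed subset of $\mathcal{G}(S)$ where the rank of $(\omega_1,\ldots,\omega_7) \colon T\mathcal{G}(S) \to \R^7$ drops below three. The main obstacle is to show that this closed set has empty interior unless $\gamma$ is locally equivalent to a Gauss lift of a Veronese surface. The plan is to assume rank-drop on an open subset of $\mathcal{G}(S)$, differentiate the resulting algebraic relations using (\ref{eq:BerStruct1})--(\ref{eq:BerStruct2}) to generate further identities, and propagate the constraints until $S$ is forced to lie in a distinguished $\SO(5)$-orbit inside $\gra$, which can then be identified with the Gauss lift of the standard Veronese surface $\Sigma_0$ by a homogeneity argument. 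Granting this, $S^\circ \subset S$ will be dense and $\Gamma(S^\circ)$ a $\C$-ruled associative $3$-fold, completing the proof.
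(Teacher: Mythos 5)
Your opening reduction is correct and is essentially what the paper does implicitly: since $\omega_2,\dots,\omega_7$ are $\lambda$-semibasic, the terms $\omega_{256}+\omega_{257}+\omega_{347}-\omega_{356}$ pull back to $\mathcal{G}(S)$ through the $2$-dimensional base and vanish, so the associative condition reduces to a statement about $\Omega:=\omega_{23}+\omega_{45}-\omega_{67}$ paired against the ruling direction. The proof breaks down at the next step: \emph{$\Omega$ does not descend to $\gra$}. It is $\lambda$-semibasic but not $\mathrm{T}^2$-invariant. Indeed, letting $e_1$ be the vector dual to $\omega_1$ (which is tangent to the fibres of $\lambda$, since $\mathcal{D}=\ker(\omega_2,\dots,\omega_7,\gamma_2,\gamma_3)$), the structure equations (\ref{eq:BerStruct1}) give $\mathcal{L}_{e_1}\omega_2=-\sqrt{6}\,\gamma_2+\tfrac23\omega_3$ and $\mathcal{L}_{e_1}\omega_3=\sqrt{6}\,\gamma_3-\tfrac23\omega_2$, whence
\begin{equation*}
\mathcal{L}_{e_1}\!\left(\omega_2\wedge\omega_3\right)=-\sqrt{6}\left(\gamma_2\wedge\omega_3+\gamma_3\wedge\omega_2\right)\neq 0 ,
\end{equation*}
while $\omega_{45}$ and $\omega_{67}$ are invariant. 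Equivalently, by (\ref{eq:omegazeta}) one has $\omega_2+i\omega_3=\tfrac{1}{10}(-6\zeta_1+3\sqrt{6}\,\zeta_4)$, a mixture of two distinct $\mathrm{T}^2$-weights, so $\omega_{23}$ contains the non-invariant cross term $\zeta_1\wedge\overline{\zeta}_4+\zeta_4\wedge\overline{\zeta}_1$. So the object you propose to use to define $J$ does not exist on $\gra$. Moreover, even on the frame bundle $\Omega$ is a \emph{degenerate} $(1,1)$-form: in the $(\zeta_1,\zeta_4)$-block its Hermitian matrix is proportional to $\bigl(\begin{smallmatrix}36 & -18\sqrt{6}\\ -18\sqrt{6} & 54\end{smallmatrix}\bigr)$, which has determinant zero with kernel exactly $\{2W_1=\sqrt{6}\,W_4\}$ --- precisely the degeneracy locus appearing in part (2). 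A unit-norm $2$-form does not determine an orthogonal almost complex structure (the associated skew endomorphism must square to $-\mathrm{id}$, i.e.\ all singular values must equal $1$), and a degenerate form certainly cannot.

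This is not merely a technical repair: the Wirtinger-type equivalence you invoke cannot hold for any form built only from $\omega_2,\dots,\omega_7$, because these span just $6$ of the $8$ $\lambda$-semibasic directions, so the condition $\Omega|_S=\mathrm{vol}_S$ places no constraint on how $S$ sits relative to $\gamma_2,\gamma_3$, whereas $J$-holomorphicity does (via $\zeta_1,\zeta_4$). In the paper's proof of part (1), the associative condition directly yields only $(\omega_2+i\omega_3)\wedge(\omega_4+i\omega_5)=0$ and its cyclic companions; the missing relation $(\omega_2+i\omega_3)\wedge(\gamma_2-i\gamma_3)=0$ is obtained by \emph{differentiating} those identities using (\ref{eq:BerStruct1}). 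That prolongation step is the real content of part (1) and has no counterpart in your argument. (There is also a smaller imprecision: the transverse volume of $N$ is that of $\omega_2^2+\cdots+\omega_7^2$, which is not proportional to $g_S$, so $\mathrm{vol}_S$ is not the relevant area form in your reduction.) Your outline of part (2) --- showing the rank-drop locus has empty interior unless $S$ is a Gauss lift of a Veronese surface --- is the right idea and matches the paper's strategy, but carrying it out requires the correct $J$ and the $\overline{\partial}$-identities for the coefficient functions $W_i$, so it cannot stand on the foundation laid in your part (1).
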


\begin{proof}
	We begin by defining the almost complex structure $J$ on $\text{Gr}_2^+(TS^4)$. Define $\mathbb{C}$-valued 1-forms on $\SO(5)$ by
	\begin{align*}
	\zeta_1 & = \textstyle \frac{1}{\sqrt{2}} \left( \mu_{12} - i \mu_{13} \right), & \zeta_3 & \textstyle = \frac{1}{2}\left[ (\mu_{24}- \mu_{35}) + i ( \mu_{25} + \mu_{34} ) \right], \\
	 \zeta_2 & = \textstyle \frac{1}{\sqrt{2}} \left( \mu_{14} - i \mu_{15} \right), & \zeta_4 & \textstyle = \frac{1}{2}\left[ (\mu_{24} + \mu_{35}) - i ( \mu_{25} - \mu_{34} ) \right].
	\end{align*}
	These forms are $\lambda$-semibasic and satisfy the equations
	\begin{equation}\label{eq:Jstruct} \left.
	\begin{aligned}
	d \zeta_1 &\equiv \overline{\zeta}_2 \wedge \overline{\zeta}_3 \\
	d \zeta_2 &\equiv \overline{\zeta}_3 \wedge \overline{\zeta}_1 \\
	d \zeta_3 &\equiv \overline{\zeta}_1 \wedge \overline{\zeta}_2 \\
	d \zeta_4 &\equiv 0
	\end{aligned} \:\:\: \right\rbrace \:\: \text{mod} \:\:\: \zeta_1, \zeta_2, \zeta_3, \zeta_4.
	\end{equation}
	Since the fibres of $\lambda$ are connected, it follows that there is a unique almost complex structure $J$ on $\SO(5)/\mathrm{T}^2$ so that the $(1,0)$-forms on $\SO(5)/\mathrm{T}^2$ pull back to be linear combinations of $\zeta_1, \zeta_2, \zeta_3, \zeta_4.$ Equations (\ref{eq:Jstruct}) imply that $J$ is non-integrable.  For future use, we remark that
\begin{equation} \label{eq:omegazeta}
\begin{bmatrix}
\omega_2 + i\omega_3 \\
\omega_4 + i\omega_5 \\
\omega_6 - i\omega_7 \\
\gamma_2 - i\gamma_3
\end{bmatrix} = \frac{1}{10}\begin{pmatrix}
-6 & 0 & 0 & 3\sqrt{6} \\
0 & -3\sqrt{10} & 0 & 0 \\
0 & 0 & -3\sqrt{10} & 0 \\
2\sqrt{6}\,i & 0 & 0 & 4i
\end{pmatrix}\begin{bmatrix}
\zeta_1 \\
\zeta_2 \\
\zeta_3 \\
\zeta_4
\end{bmatrix}.
\end{equation}
	
	
	We now prove statement (1). Let $N \to \Ber$ be a ruled associative submanifold.  As above, we let $V$ denote the unit vector field on $N$ that is tangent to the $\C$-ruling, and perform computations on $\mathcal{F}'(N)$.  Since $V$ is tangent to a $\C$-curve, we have $\omega_{1} = \alpha_1$ and
	\begin{align*}
	d \mathbf{e}_1 \equiv 0 \:\:\: \text{mod} \:\:\: \alpha_2, \alpha_3, \\
	d \left( \mathbf{e}_2 \wedge \mathbf{e}_3 \right) \equiv 0 \:\:\: \text{mod} \:\:\: \alpha_2, \alpha_3.
	\end{align*}
	on $\mathcal{F}'(N).$ Let $\sigma = \alpha_2 + i \alpha_3.$ It follows from (\ref{eq:SO5Struct}) that there exist $\mathbb{C}$-valued functions $A, B, C, F, G, X, Y, Z$ on $\mathcal{F}'(N)$ so that
	\begin{align*}
	\omega_2 + i \omega_3 & = A \sigma + X \overline{\sigma}, & \gamma_2 - i \gamma_3 & = F \sigma + G \overline{\sigma}, \\
	\omega_4 + i \omega_5 & = B \sigma + Y \overline{\sigma}, \\
	\omega_6 - i \omega_7 & = C \sigma + Z \overline{\sigma},
	\end{align*}
	 on $\mathcal{F}'(N)$. Pulling back to $\mathcal{F}'(N)$ we have
	 \begin{align*}
	 \varphi |_{N} &= \frac{i}{2} \left( \left\lvert A \right\rvert^2 + \left\lvert B \right\rvert^2 + \left\lvert C \right\rvert^2 - \left\lvert X \right\rvert^2 - \left\lvert Y \right\rvert^2 - \left\lvert Z \right\rvert^2 \right) \alpha_1 \wedge \sigma \wedge \overline{\sigma}, \\
	 g |_{N} &= \alpha_1^2 + \Re \left( \left( A \overline{X} + B \overline{Y} + C \overline{Z} \right) \sigma^2 \right) + \left( \left\lvert A \right\rvert^2 + \left\lvert B \right\rvert^2 + \left\lvert C \right\rvert^2 + \left\lvert X \right\rvert^2 + \left\lvert Y \right\rvert^2 + \left\lvert Z \right\rvert^2 \right) \sigma \circ \overline{\sigma} \\
	 \text{vol}_N & = \frac{i}{2} \left( \left( \left\lvert A \right\rvert^2 + \left\lvert B \right\rvert^2 + \left\lvert C \right\rvert^2 + \left\lvert X \right\rvert^2 + \left\lvert Y \right\rvert^2 + \left\lvert Z \right\rvert^2 \right)^2 - 4 \left\lvert A \overline{X} + B \overline{Y} + C \overline{Z} \right\rvert^2 \right)^{1/2} \alpha_1 \wedge \sigma \wedge \overline{\sigma},
	 \end{align*}
	 
	 Now, $N$ is associative, so $\varphi |_N = \text{vol}_N.$ It follows from the above formulas, and an application of the Cauchy-Schwarz inequality, that the vectors $\left(A, B, C \right), \left(X, Y, Z\right) \in \mathbb{C}^3$ are parallel. That is, there are $\mathbb{C}$-valued functions $u$ and $v$ on $\mathcal{F}'(N)$ so that $u X = v A,  u Y= v B, u Z = v C.$ So, the following equations hold on $\mathcal{F}'(N)$:
	 \begin{align*}
	 \left( \omega_2 + i \omega_3 \right) \wedge \left( \omega_4 + i \omega_5 \right) = 0, \\
	 \left( \omega_4 + i \omega_5 \right) \wedge \left( \omega_6 - i \omega_7 \right) = 0, \\
	 \left( \omega_6 - i \omega_7 \right) \wedge \left( \omega_2 + i \omega_3 \right) = 0.
	 \end{align*}
	 Differentiating these equations using the structure equations (\ref{eq:BerStruct1}) we find
	 \begin{align*}
	 B \left( u G - v F \right) \, \alpha_1 \wedge \sigma \wedge \overline{\sigma} = 0, \:\:\:\:\:
	 C \left( u G - v F \right) \, \alpha_1 \wedge \sigma \wedge \overline{\sigma} = 0,
	 \end{align*}
	 so the functions $B \left(u G - v F \right)$ and $C \left( u G -v F \right)$ vanish identically on $\mathcal{F}'(N).$
	 If the function $u G - v F$ vanishes identically on $\mathcal{F}'(N),$ then $\left( \omega_2 + i \omega_3 \right) \wedge \left( \gamma_2 - i \gamma_3 \right) = 0$ on $\mathcal{F}'(N).$ If $G$ does not vanish identically on $\mathcal{F}'(N),$ we may restrict to the dense open set on which $B = C = 0.$ On this set we have $\omega_4 = \omega_5 = \omega_6 = \omega_7 = 0.$ The structure equations (\ref{eq:BerStruct1}) imply that in this case too we have
	 \begin{equation*}
	  \left( \omega_2 + i \omega_3 \right) \wedge \left( \gamma_2 - i \gamma_3 \right) = 0.
	 \end{equation*}
	 
	 By (\ref{eq:omegazeta}), on $\SO(5)$ we have
	 \begin{equation*}
	 \begin{aligned}
	 \omega_2 + i \omega_3 \equiv \omega_4 + i \omega_5 \equiv \omega_6 - i \omega_7 \equiv \gamma_2 - i \gamma_3 \equiv 0 \:\:\: \text{mod} \:\:\: \zeta_1, \zeta_2, \zeta_3, \zeta_4.
	 \end{aligned}
	 \end{equation*}
	 It follows that the image of the natural map $\mathcal{F}'(N) \to \text{Gr}_2^+(TS^4)$ is a $J$-holomorphic curve. This proves statement (1).


	We now prove statement (2). Let $S \to \text{Gr}_2^+(TS^4)$ be a $J$-holomorphic curve. Since $S$ is $J$-holomorphic, there exist $\mathbb{C}$-valued functions $W_1, W_2, W_3, W_4$ on $\mathcal{G}(S)$ so that
	 \begin{align} \label{eq:WDef}
	 \zeta_1 = W_1 \sigma, \:\: \zeta_2 = W_2 \sigma, \:\: \zeta_3 = W_3 \sigma, \:\: \zeta_4 = W_4 \sigma, \\
	 |W_1|^2 + |W_2|^2 + |W_3|^2 + |W_4|^2 = 1, \label{eq:Wsqeq}
	 \end{align}
	 on $\mathcal{G}(S)$. Using (\ref{eq:omegazeta}) and (\ref{eq:WDef}), one can check that the natural map $\mathcal{G}(S) \to \Ber$ is a $\mathcal{C}$-ruled associative immersion on the locus where the functions $2 W_1 -\sqrt{6} W_4, W_2, W_3$ do not simultaneously vanish. 
 
	 Now, the functions $|W_1|^2, |W_2|^2, |W_3|^2, |W_4|^2$ on $\mathcal{G}(S)$ descend to well-defined functions on $S$, and we may consider the locus
	 \begin{equation*}
	 Z = \{ 2|W_1|^2 = 3|W_4|^2 \text{ and } |W_2|^2 = 0 \text{ and } |W_3|^2 = 0\} \subset S.
	 \end{equation*}
	 With respect to the complex structure on $S$ induced by the $J$-holomorphic map $S \to \text{Gr}_2^+(TS^4)$, one can compute that
	 \begin{align*}
	 \overline{\partial} W_1 & = W_2\overline{W}_4\,\overline{\sigma} & \overline{\partial} W_3 & = 0 \\
	 \overline{\partial} W_2 & = 0 & \overline{\partial} W_4 & = \overline{W}_1W_2\,\overline{\sigma}
	 \end{align*}
	 In particular, $W_2$ and $W_3$ are holomorphic.  If $W_2$  is not identically zero, then its zero set is discrete, and hence $Z$ is discrete. The same reasoning applies to $W_3.$ If both $W_2$ and $W_3$ are identically zero, then $W_1$ and $W_4$ are holomorphic, and the set $Z$ is the vanishing locus of a real analytic function. Defining $S^\circ = S \setminus Z$, we see that the open set $S^\circ$ is either dense or empty. 

If $S^\circ$ is empty, we have $2|W_1|^2 = 3|W_4|^2$ and $|W_2| = |W_3| = 0$ on $S,$ and hence by (\ref{eq:Wsqeq}), $\left\lvert W_1 \right\rvert^2 = 3/5$ and $\left\lvert W_4 \right\rvert^2 = 2/5$ on $S.$ Define a subbundle $\mathcal{G}'(S) \subset \mathcal{G}(S) $ by the condition $W_1 = \sqrt{3 / 5}, W_4 = \sqrt{2 / 5}.$ By (\ref{eq:omegazeta}), on this subbundle, $\omega_{{1}} = \ldots = \omega_{{7}} = 0,$ and the Maurer-Cartan form of $\SO(5)$ restricts to be
\begin{small}
	\begin{equation*}
	\mu = \left[ \begin {array}{ccccc} 0 & -\sqrt {3}\gamma_{{3}} & \sqrt {3}\gamma_{{2}} & 0 & 0 \\
	\sqrt {3}\gamma_{{3}} & 0 & -\gamma_{{1}} & -\gamma_{{3}} & \gamma_{{2}} \\
	-\sqrt {3}\gamma_{{2}} & \gamma_{{1}} & 0 & -\gamma_{{2}} & -\gamma_{{3}} \\
	0 & \gamma_{{3}} & \gamma_{{2}} & 0 &-2\gamma_{{1}} \\
	0 & -\gamma_{{2}} & \gamma_{{3}} & 2\gamma_{{1}} & 0
	\end {array} \right].
	\end{equation*}
\end{small}
Thus,  $S$ is the Gauss lift of a Veronese surface to $\gra.$ This proves statement (2).
\end{proof}

\begin{cor}
	Ruled associative submanifolds of $\Ber$ exist locally and depend on 6 functions of 1 variable.
\end{cor}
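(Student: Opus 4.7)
The plan is to derive the corollary directly from Theorem \ref{thm:RuledAssoc}. By that theorem, ruled associative submanifolds of $\Ber$ correspond, on dense open subsets and away from the finite-dimensional family of Gauss lifts of Veronese surfaces, to $J$-holomorphic curves in the $8$-real-dimensional almost complex manifold $(\mathrm{Gr}_2^+(TS^4), J)$. Since the Veronese family is parametrised by the $7$-manifold $\Ber$ and so does not contribute to any count of ``functions of one variable'', the local moduli problem for ruled associatives in $\Ber$ reduces to the local moduli problem for $J$-holomorphic curves in the almost complex $4$-fold $\mathrm{Gr}_2^+(TS^4)$.

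Local existence is then immediate from the Nijenhuis--Woolf theorem, which provides a $J$-holomorphic disc through every point of $\mathrm{Gr}_2^+(TS^4)$ tangent to every $J$-invariant $2$-plane; pushing these forward via $\pi \circ \lambda^{-1}$ produces local ruled associatives in $\Ber$. For the dependence count, I would invoke the standard fact that $J$-holomorphic curves in an almost complex manifold of complex dimension $n$ depend locally on $2(n-1)$ real functions of one real variable: in a local graph representation over a $(1,0)$-slice, a curve is governed by a non-linear $\bar\partial$-type elliptic system for $n-1$ complex unknowns, which in the real-analytic category is determined by its Cauchy data along a real-analytic $1$-curve, namely $n-1$ complex-valued, equivalently $2(n-1)$ real-valued, functions of one real variable. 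Setting $n = 4$ yields the claimed $6$ functions of $1$ variable.

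The main technical point I would want to justify rigorously, rather than merely cite, is this dependence count in the non-integrable setting. The cleanest approach is via Cartan--K\"ahler applied to the exterior differential system on $\mathrm{Gr}_2^+(TS^4)$ whose $2$-dimensional integral manifolds are precisely the $J$-holomorphic curves, generated algebraically by the real and imaginary parts of the $(2,0)$-forms $\zeta_i \wedge \zeta_j$ for $1 \le i < j \le 4$. At a generic integral $2$-flag $(0 \subset \ell \subset L)$, the polar space of $\ell$ is exactly $\ell \oplus J\ell$, giving Cartan characters $s_0 = 0$ and $s_1 = 6$; verifying Cartan's test then confirms involutivity, and the stated generality follows.
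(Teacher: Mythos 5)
Your argument is correct and is precisely the (unwritten) derivation the paper intends: the corollary follows from Theorem \ref{thm:RuledAssoc} by reducing to the local generality of $J$-holomorphic curves in the almost complex $4$-fold $\gra$, where the standard count $2(n-1)$ with $n=4$ gives $6$ functions of $1$ variable. Your Cartan--K\"ahler verification (no $1$-forms so $s_0=0$; polar space of a generic line $\ell$ equal to $\ell\oplus J\ell$ so $s_1=6$; the $6$-dimensional space of integral $2$-planes $\cong\mathbb{CP}^3$ has codimension $6=s_0+s_1$ in the $12$-dimensional Grassmannian, confirming involutivity) is the right way to make the count rigorous in the non-integrable setting, and correctly sets aside the finite-dimensional Veronese locus excluded in Theorem \ref{thm:RuledAssoc}(2).
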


\begin{remark}
	Theorem \ref{thm:RuledAssoc} may be thought of as an analogue of theorems of Bryant \cite{Bry06} and Fox \cites{Fox2007ConesPlanes,Fox2008Cayley} characterising the ruled submanifolds under consideration as pseudo-holomorphic curves in the respective spaces of rulings.
\end{remark}

\begin{remark}
 For a $J$-holomorphic curve in $\text{Gr}_2^+(TS^4)$ we may think of the locus $S  \setminus S^\circ$ as the set of points where $S$ osculates to second order with the lift of a Veronese surface. In light of the fact that the $\mathcal{C}$-ruled associative submanifold associated to $S$ is defined as the set of Veronese surfaces sharing first order contact with $S,$ it is not surprising that issues occur at  $S  \setminus S^\circ.$ A similar phenomenon occurs in the theory of curves in the plane: if a plane curve has a vertex (a point where the derivative of its curvature function is zero), then its evolute (the curve traced out by the centres of the osculating circles) has a cusp.
 
From the proof of Theorem \ref{thm:RuledAssoc}, the ruled associative corresponding to $S^\circ$ is given by the image of the natural map $\mathcal{G}(S^{\circ}) \to \Ber$. However, the locus on $\mathcal{G}(S)$ where the functions $2 W_1 -\sqrt{6} W_4, W_2, W_3$ do not simultaneously vanish is strictly larger than $\mathcal{G}(S^\circ)$: if $p \in S  \setminus S^\circ$ then the locus in the fibre $\mathcal{G}(S)_p$ with $2 W_1 -\sqrt{6} W_4 = 0, W_2 = 0, W_3 = 0$ has positive codimension. Thus, the image of the map $\mathcal{G}(S) \setminus \left\lbrace 2 W_1 -\sqrt{6} W_4 = 0, W_2 = 0, W_3 = 0 \right\rbrace \to \Ber$ is an associative submanifold extending the ruled associative corresponding to $S^\circ.$
\end{remark}

We now proceed to study the $J$-holomorphic curves in $\text{Gr}_2^+(TS^4)$. The structure equations of $\text{Gr}_2^+(TS^4)$ written in terms of the basis $\zeta_i$ defined in the proof of Theorem \ref{thm:RuledAssoc} are
\begin{equation}\label{eq:StructFlag}
\begin{aligned}
d \zeta_1 & = -i \left( \rho_1 - \rho_2 \right) \wedge \zeta_1 + \zeta_2 \wedge \overline{\zeta_4} + \overline{\zeta_2} \wedge \overline{\zeta_3}, & d \rho_1 & = -\tfrac{i}{2} \left(\zeta_1 \wedge \overline{\zeta_1} + \zeta_2 \wedge \overline{\zeta_2} -2 \zeta_3 \wedge \overline{\zeta_3}\right), \\
d \zeta_2 & = -i \left( \rho_1 + \rho_2 \right) \wedge \zeta_2 - \zeta_1 \wedge {\zeta_4} + \overline{\zeta_3} \wedge \overline{\zeta_1}, & d \rho_2 & = -\tfrac{i}{2} \left( - \zeta_1 \wedge \overline{\zeta_1} + \zeta_2 \wedge \overline{\zeta_2} + 2 \zeta_4 \wedge \overline{\zeta_4}\right), \\
d \zeta_3 & = 2 i \rho_1 \wedge \zeta_3 + \overline{\zeta_1} \wedge \overline{\zeta_2}, & & \\
d \zeta_4 & = -2 i \rho_2 \wedge \zeta_4 + \overline{\zeta_1} \wedge {\zeta_2}, & &
\end{aligned}
\end{equation}
where $\rho_1 = \tfrac{1}{2} \left(\mu_{23} + \mu_{45} \right), \rho_2 = -\tfrac{1}{2} \left(\mu_{23} - \mu_{45} \right)$ are connection forms for the $\mathrm{T}^2$-structure on $\SO(5)/\mathrm{T}^2.$

\begin{remark}
	The space $\SO(5)/\mathrm{T}^2 \cong \mathrm{Gr}_2^+\! \left( T S^4 \right)$ carries two tautological $\R^2$-bundles, corresponding to the projections
	\begin{align*}
	& \mathrm{Gr}_2^+\! \left( T S^4 \right) \ni \left( p, E \right) \mapsto E \in \mathrm{Gr}_2 (\R^5), \\
	& \mathrm{Gr}_2^+\! \left( T S^4 \right) \ni \left( p, E \right) \mapsto \left( \textrm{span}(p) \oplus E \right)^\perp \in \mathrm{Gr}_2 (\R^5).
	\end{align*}
	Let $P_1$ and $P_2$ denote the associated circle bundles. The forms $\rho_1 + \rho_2, \rho_1 - \rho_2$ are connection forms for $P_1, P_2$ respectively. If $S \to \mathrm{Gr}_2^+\! \left( T S^4 \right)$ is a $J$-holomorphic curve, then one sees from the proof of Theorem \ref{thm:RuledAssoc} that the $\mathcal{C}$-ruled associative submanifold $N$ corresponding to $S$ is topologically the total space of the circle subbundle of the pullback of $P_1 \times_{\text{Gr}_2^+(TS^4)} P_2 \to \text{Gr}_2^+(TS^4)$ to $S$ with connection form $-\rho_1 + 3 \rho_2.$ 
\end{remark}

It follows from the structure equations (\ref{eq:StructFlag}) that the subspace of $\mathfrak{so}(5)$ spanned by the dual vectors to $\zeta_4, \overline{\zeta_4}, \rho_1, \rho_2$ defines a subalgebra isomorphic to $\mathfrak{u}(2),$ which exponentiates to a subgroup $\mathrm{U}(2) \subset \SO(5)$.  The forms $\zeta_1, \zeta_2, \zeta_3$ are semi-basic for the projection $\SO(5) \to \SO(5)/\mathrm{U}(2)$. From the structure equations (\ref{eq:StructFlag}) we see that the forms
\begin{equation*}
\tfrac{i}{2} \left( \zeta_1 \wedge \overline{\zeta_1} + \zeta_2 \wedge \overline{\zeta_2} + \zeta_3 \wedge \overline{\zeta_3} \right) \:\:\: \text{and} \:\:\: \zeta_1 \wedge \zeta_2 \wedge \zeta_3
\end{equation*}
define an $\SO(5)$-invariant \emph{nearly K\"ahler} structure on $\SO(5)/\mathrm{U}(2).$ In fact, $\SO(5)/\mathrm{U}(2)$ is diffeomorphic to $\mathbb{CP}^3,$ and the nearly K\"ahler structure just described is exactly the well-known homogeneous nearly K\"ahler structure arising from the twistor fibration $\mathbb{CP}^3 \to S^4$. Let $J_{\text{NK}}$ denote the almost complex structure associated to this nearly K\"ahler structure on $\SO(5)/\mathrm{U}(2).$

The $(1,0)$-forms on $(\mathbb{CP}^3, J_{\text{NK}})$ are exactly the 1-forms whose pullbacks to $\SO(5)$ are linear combinations of $\zeta_1, \zeta_2, \zeta_3$. It follows that the projection of a $J$-holomorphic curve in $\mathrm{Gr}_2^+\! \left( T S^4 \right)$ is a $J_{\text{NK}}$-holomorphic curve in $\mathbb{CP}^3.$ The next proposition provides a converse to this construction.

\begin{prop}\label{prop:JNK}
	Any $J_{\text{NK}}$-holomorphic curve in $\mathbb{CP}^3$ has a unique lift to a $J$-holomorphic curve in $\mathrm{Gr}_2^+\! \left( T S^4 \right)$ satisfying $\zeta_2 = 0.$
\end{prop}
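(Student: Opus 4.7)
The strategy is to construct the lift as a $\mathrm{T}^2$-reduction of the pullback $\mathrm{U}(2)$-bundle over $C$, cut out by the condition $\zeta_2 = 0$, and then to verify $J$-holomorphicity using the structure equation (\ref{eq:StructFlag}) for $d\zeta_2$. First I would set up the pullback bundle $P := \pi^{-1}(C) \to C$, where $\pi \colon \SO(5) \to \mathbb{CP}^3$: this is a principal $\mathrm{U}(2)$-bundle over $C$. Since $\zeta_1, \zeta_2, \zeta_3$ are $\pi$-semi-basic and $C$ is $J_{\text{NK}}$-holomorphic, there exist $\mathbb{C}$-valued functions $W_1, W_2, W_3$ on $P$ and a $(1,0)$-form $\sigma$ pulled back from $C$ with $\zeta_i = W_i \sigma$. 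Under the $\mathrm{U}(2)$ isotropy action on $P$, the pair $(W_1, W_2)$ transforms as the standard representation on $\mathbb{C}^2$, because $\zeta_1, \zeta_2$ form a basis for the $(1,0)$-tangent of $\mathbb{CP}^3$ that is horizontal for the twistor fibration $\mathbb{CP}^3 \to S^4$.

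Next, provided $C$ is not itself a twistor fibre (so that $(W_1, W_2) \not\equiv (0,0)$ on $P$), the zero locus $P' := \{W_2 = 0\} \subset P$ is a smooth principal $\mathrm{T}^2$-subbundle, corresponding pointwise to the $\mathrm{T}^2$-stabiliser of $(W_1, 0) \in \mathbb{C}^2$ inside $\mathrm{U}(2)$. Equivalently, this reduction of structure group defines a unique section $\tilde C \subset \mathrm{Gr}_2^+(TS^4)$ of the $\mathbb{CP}^1$-bundle $\lambda$ over $C$, and $P' = \pi'^{-1}(\tilde C)$ for $\pi' \colon \SO(5) \to \mathrm{Gr}_2^+(TS^4)$.

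I would then verify that $\tilde C$ is $J$-holomorphic. Since $\zeta_4$ is $\pi'$-semi-basic, on $P'$ we may decompose $\zeta_4 = A \sigma + B \overline{\sigma}$, and the lift is $J$-holomorphic iff $B \equiv 0$. Pulling back the structure equation
\begin{equation*}
d\zeta_2 = -i(\rho_1 + \rho_2) \wedge \zeta_2 - \zeta_1 \wedge \zeta_4 + \overline{\zeta_3} \wedge \overline{\zeta_1}
\end{equation*}
to $P'$ annihilates the left-hand side, while the term $\overline{\zeta_3} \wedge \overline{\zeta_1} = \overline{W_1 W_3}\, \overline{\sigma} \wedge \overline{\sigma}$ vanishes automatically because $\zeta_1$ and $\zeta_3$ are both proportional to $\sigma$. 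What remains is $-W_1 B\, \sigma \wedge \overline{\sigma} = 0$, which forces $B = 0$ since $W_1 \neq 0$ on $P'$ (the alternative $W_1 = 0$ combined with $W_2 = 0$ would return us to the twistor-fibre case). Uniqueness is then immediate: any $J$-holomorphic lift satisfying $\zeta_2 = 0$ must arise from a $\mathrm{T}^2$-reduction of $P$ contained in $\{W_2 = 0\}$, and this reduction is $P'$.

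The main obstacle is the degenerate case when $C$ is itself a twistor fibre of $\mathbb{CP}^3 \to S^4$: there $\zeta_1$ and $\zeta_2$ vanish identically on $C$, so the condition $\zeta_2 = 0$ is vacuous and the reduction construction collapses. In this situation $\lambda^{-1}(C)$ is a $J$-invariant $4$-dimensional submanifold of $\mathrm{Gr}_2^+(TS^4)$, and the unique $J$-holomorphic section inside it must be pinned down by a separate argument, using the structure equation for $d\zeta_1$ in place of that for $d\zeta_2$ to identify the distinguished holomorphic section of the restricted $\mathbb{CP}^1$-bundle $\lambda^{-1}(C) \to C$. Once this specific case is handled, it joins with the generic case to give the uniform statement of the proposition.
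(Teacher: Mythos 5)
Your main argument is the paper's argument: reduce the pulled-back $\mathrm{U}(2)$-bundle to a $\mathrm{T}^2$-bundle by rotating the pair $(\zeta_1,\zeta_2)$ --- which transforms in the $\mathrm{U}(2)$-module $\mathbb{C}^2$ --- so that $\zeta_2=0$, then read off from the $d\zeta_2$ line of (\ref{eq:StructFlag}) that $\zeta_1\wedge\zeta_4=0$ on the reduction (your statement $B=0$), so that the section of $\lambda$ cut out by $W_2=0$ is $J$-holomorphic and clearly unique. This is correct and is exactly the proof in the paper.

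One comment on your final paragraph. You are right to flag the twistor-fibre case as degenerate --- the paper passes over it in silence even though it later feeds twistor fibres into Proposition \ref{prop:JNK} --- but your proposed repair does not work. On the preimage of a twistor fibre $C$ over $p\in S^4$, which is a coset of $\SO(4)\subset\SO(5)$, one has $\zeta_1=\zeta_2=0$ identically, so the structure equation for $d\zeta_1$ is just as vacuous as the one for $d\zeta_2$ and pins down nothing. In fact every constant section of $\mathrm{Gr}_2^+\!\left(T_pS^4\right)\cong S^2\times S^2\to C$ has $\zeta_1=\zeta_2=\zeta_4=0$ along its graph, hence is a $J$-holomorphic lift satisfying $\zeta_2=0$; uniqueness genuinely fails for twistor fibres, and the proposition should be read as applying to curves not contained in a fibre. (This does no harm to the paper's later use of the twistor-fibre case, since the $\SO(4)$-stabiliser of $p$ permutes these lifts transitively, so they all give the same associative up to ambient motion.)
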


\begin{proof}
	The isotropy representation of $\mathbb{CP}^3 \cong \SO(5)/\mathrm{U}(2)$ splits the cotangent space $T^* \mathbb{CP}^3$ into a direct sum of bundles
	\begin{equation*}
	T^* \mathbb{CP}^3 = \mathcal{H}^* \oplus \mathcal{V}^*,
	\end{equation*}
	where the pullback of $\mathcal{H}^*$ to $\SO(5)$ is spanned by $\zeta_1, \zeta_2,$ and the pullback of $\mathcal{V}^*$ is spanned by $\zeta_3.$ The bundle $\mathcal{H}$ is modeled on the $\mathrm{U}(2)$-representation $\mathbb{C}^2$. 
	
	Let $S$ be a $J_{\text{NK}}$-holomorphic curve in $\mathbb{CP}^3.$ Since  $\mathrm{U}(2)$ acts transitively on the complex lines in $\mathbb{C}^2,$ we may adapt frames so that $\zeta_2=0$ on $S$. Let $\mathcal{E}(S)$ denote the bundle of frames adapted in this way. The structure equations (\ref{eq:StructFlag}) then imply that $\zeta_1 \wedge \zeta_4 = 0$ on $\mathcal{E}(S)$.  The map $\mathcal{E}(S) \to \mathrm{Gr}_2^+\! \left( T S^4 \right)$ given by composing the natural map $\mathcal{E}(S) \to \SO(5)$ with the coset projection $\lambda: \SO(5) \to \mathrm{Gr}_2^+\! \left( T S^4 \right)$ does not depend on the choice of coframe, so provides a $J$-holomorphic lift $S \to \mathrm{Gr}_2^+\! \left( T S^4 \right)$.
\end{proof}

The $J_{\text{NK}}$-holomorphic curves in $\mathbb{CP}^3$ have been studied by Xu \cite{Xu10}. There are two classes of curves of particular interest. A  $J_{\text{NK}}$-holomorphic curve in $\mathbb{CP}^3$ is called \emph{horizontal} if it is horizontal for the twistor projection $\mathbb{CP}^3 \to \mathbb{HP}^1 \cong S^4,$ i.e. if $\zeta_3 = 0$ on the curve. A $J_{\text{NK}}$-holomorphic curve in $\mathbb{CP}^3$ is called \emph{null-torsion} if, after adapting frames so that $\zeta_2=0$, one has $\zeta_4 = 0$ on the adapted coframe bundle.

By work of Bryant \cite{Bry82}, the horizontal curves in $\mathbb{CP}^3$ are the twistor lifts of superminimal surfaces in $S^4$ of positive spin. Xu shows that the the null-torsion curves in $\mathbb{CP}^3$ are the lifts of superminimal surfaces in $S^4$ of negative spin (note that the antipodal map in $S^4$ maps superminimal surfaces of positive spin to ones of negative spin and vice versa). In both cases the resulting $J$-holomorphic curve in $\gra$ is simply the Gauss lift of the original surface in $S^4.$ Superminimal surfaces in $S^4$ admit a Weierstrass formula \cite{Bry82}, and using this formula it is possible in principle to write down a formula giving the associated ruled associative submanifolds of $\Ber$ for both of these cases.

\begin{thm}\label{thm:TopType}
	There exist infinitely many topological types of compact (immersed, generically 1-1) $\mathcal{C}$-ruled associative submanifolds of $\Ber.$
\end{thm}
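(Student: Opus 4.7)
The plan is to construct, for each integer $g \geq 0$, a compact immersed $\mathcal{C}$-ruled associative submanifold $N_g \subset \Ber$ whose first Betti number is at least $2g$. Different values of $g$ then yield non-homeomorphic $N_g$, giving infinitely many topological types. The construction chains together Bryant's Weierstrass-type theorem \cite{Bry82}, which produces compact immersed superminimal surfaces in $S^4$ of every genus and either spin, Xu's identification \cite{Xu10} of null-torsion $J_{\text{NK}}$-holomorphic curves in $\mathbb{CP}^3$ with the lifts of superminimal surfaces of negative spin, and the correspondence of Theorem \ref{thm:RuledAssoc}.

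Concretely, fix $g \geq 0$ and pick a compact immersed superminimal surface $\Sigma_g \subset S^4$ of genus $g$ and negative spin. Its twistor lift $\widehat{\Sigma}_g \subset \mathbb{CP}^3$ is null-torsion by Xu, and by Proposition \ref{prop:JNK} this lifts to a $J$-holomorphic curve $\tilde{S}_g \subset \mathrm{Gr}_2^+(TS^4)$ that coincides with the Gauss lift of $\Sigma_g$; in particular $\tilde{S}_g$ has genus $g$. Applying Theorem \ref{thm:RuledAssoc}(2) together with the remark following it, $N_g$ is defined as the image in $\Ber$ of the natural map from $\mathcal{G}(\tilde{S}_g)$. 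By the same remark, $N_g$ is (topologically) the total space of a circle bundle over $\tilde{S}_g$, so $b_1(N_g) \geq 2g$, which distinguishes the $N_g$.

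The principal obstacle, as flagged in the introduction, is ensuring that $N_g$ is an everywhere immersion of a compact 3-manifold into $\Ber$, rather than an immersion only on a dense open subset; equivalently, that the degeneracy locus $\{W_2 = W_3 = 0,\ 2W_1 = \sqrt{6}\,W_4\}$ on $\mathcal{G}(\tilde{S}_g)$ is empty. This is exactly why we use null-torsion (negative spin) rather than horizontal (positive spin) lifts: null-torsion forces $\zeta_2 = 0$ and $\zeta_4 = 0$ simultaneously on $\tilde{S}_g$, so $W_2 \equiv W_4 \equiv 0$ on $\mathcal{G}(\tilde{S}_g)$, and the normalization $|W_1|^2 + |W_3|^2 = 1$ makes the joint vanishing $W_1 = W_3 = 0$ impossible. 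For horizontal lifts, by contrast, one has only $W_2 = W_3 = 0$, and the degeneracy reduces to the real-codimension-one condition $|W_1|^2 = 3/5$, which generically cuts out a real curve in $\tilde{S}_g$ and spoils the immersion.

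Finally, the generically 1-1 property is the most delicate step of the bookkeeping: it will be ensured by choosing $\Sigma_g$ outside of a proper subvariety of Bryant's moduli space of superminimal surfaces, so that the map $N_g \to \Ber$ does not factor through a nontrivial covering of its image. The point is that sharing distinct $\mathcal{C}$-curves through a common Veronese surface imposes a positive-codimension algebraic condition on the Weierstrass data, which a generic member of the moduli avoids; combined with Step 3's immersion property, this yields the desired compact immersed generically 1-1 associatives $N_g$ and completes the proof.
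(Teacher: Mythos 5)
Your proposal follows the same route as the paper's proof: Bryant's compact generically 1-1 superminimal surfaces of every genus, taken with negative spin so that Xu's null-torsion condition and Proposition \ref{prop:JNK} produce a $J$-holomorphic lift with $\zeta_2 = \zeta_4 = 0$, which is then fed into Theorem \ref{thm:RuledAssoc} to yield a circle bundle over a genus-$g$ surface. Your check that the degeneracy locus is empty (all four $W_i$ would have to vanish, contradicting $\sum |W_i|^2 = 1$) is equivalent to the paper's observation that $\zeta_1 \neq 0$ because $\zeta_1 \circ \overline{\zeta_1}$ is the induced metric on the superminimal surface, and the rest of the bookkeeping matches.
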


\begin{proof}
	Bryant has proven \cite{Bry82} that every compact Riemann surface may be conformally immersed as a superminimal surface in $S^4$ which is generically 1-1. By the work of Xu \cite{Xu10} described above, any superminimal surface of negative spin in $S^4$ gives rise to a null-torsion $J_{\text{NK}}$-holomorphic curve in $\mathbb{CP}^3,$ and from Proposition \ref{prop:JNK} this further lifts to a $J$-holomorphic curve $S$ in $\SO(5)/\mathrm{T}^2$.
	
	On such a lift $S$ we have $\zeta_2 = \zeta_4 = 0,$ but $\zeta_1 \neq 0,$ since the induced metric on the superminimal surface in $S^4$ is given by $\zeta_1 \circ \overline{\zeta_1}.$ Thus we have $S^\circ = S,$ and Theorem \ref{thm:RuledAssoc} associates to $S$ a $\mathcal{C}$-ruled associative submanifold $N$ of $\Ber.$ The associative submanifold $N$ is topologically a circle bundle over $S,$ and so Bryant's result provides infinitely many topological types of such associative submanifolds.
\end{proof}

\begin{remark}
  If one begins with an embedded superminimal surface in $S^4$ then the lift $S \to \mathrm{Gr}_2^+\! \left( T S^4 \right)$ described in the proof will also be embedded. The non-embedded points of the associative submanifold $N$ can then only arise from intersections in $\mathrm{Gr}_2^+\! \left( T S^4 \right)$ of the lift $S$ with lifts of the Veronese surfaces comprising $N$ and one expects that in the generic case the set of such intersections is empty.
\end{remark}

\section{Associative submanifolds with special Gauss map}\label{sect:Gauss}

 Let $N \to \Ber$ be an associative submanifold.  The \emph{Gauss map} of $N$ is the map
\begin{align*}
N & \to \text{Gr}_{\textup{ass}}\! \left( \Ber \right) \\
p & \mapsto T_p N
\end{align*} 
where $\text{Gr}_{\textup{ass}} \left( \Ber \right)$ denotes the Grassmann bundle of associative 3-planes over $\Ber$.  The fibres of $\text{Gr}_{\textup{ass}}\! \left( \Ber \right) \to \Ber$ are diffeomorphic to the $8$-dimensional homogeneous space $\G_2 / \SO(4) \cong \text{Gr}_{\textup{ass}}\! \left( \R^7 \right)$.

Since $\Ber$ is a homogeneous space, we may translate each tangent plane $T_pN$ by an ambient motion $g \in \SO(5)$ to lie in a fixed $T_q\Ber \simeq \mathbb{R}^7$, and thereby obtain a well-defined map 
\begin{align*}
N & \to \text{Gr}_{\textup{ass}}\! \left( \R^7 \right) / \SO(3) \\
p & \mapsto \SO(3) \cdot [g_*^{-1}T_p N]
\end{align*}
Here, $\text{Gr}_{\textup{ass}} \!\left( \R^7 \right) / \SO(3)$ denotes the orbit space for the action of $\SO(3)$ on $\text{Gr}_{\textup{ass}}\! \left( \R^7 \right)$ that is induced by the $\SO(3)$-action on $\mathbb{R}^7$.  One difficulty in the study of associative submanifolds of the Berger space is the complicated nature of the $\SO(3)$-action on $\text{Gr}_{\text{ass}}(\mathbb{R}^7)$.  While the generic $\text{SO}(3)$-orbit has trivial stabiliser, there also exist both singular orbits (whose stabiliser is a continuous group) and exceptional orbits (whose stabiliser is a finite group).  

In this section, we shall classify the associative submanifolds of $\Ber$ with \emph{Gauss map of special type}, i.e., those associative submanifolds $N \subset \Ber$ all of whose tangent spaces $T_p N$ have non-trivial $\SO (3)$-stabiliser.  To that end, the main technical tool we will use is the following classical result, known as \textit{Cartan's Theorem on Maps into Lie Groups}:

 \begin{thm} \label{thm:MaurerCartan} \cites{IvLaSecond, EDSBook} Let $\G$ be a Lie group with Lie algebra $\mathfrak{g}$, and let $\omega \in \Omega^1(\G; \mathfrak{g})$ be the Maurer-Cartan form on $\G$.  Let $P$ be a connected, simply-connected manifold admitting a $1$-form $\mu \in \Omega^1(P; \mathfrak{g})$ satisfying $d\mu = -\mu \wedge \mu$.  Then there exists a map $F \colon P \to \G$, unique up to composition with left-translation in $\G$, such that $F^*\omega = \mu$.
 \end{thm}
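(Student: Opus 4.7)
The plan is to prove this classical result via Frobenius' theorem applied to a suitable distribution on the product $P \times \G$. First I would introduce the $\mathfrak{g}$-valued $1$-form
\[
\eta = \pi_1^* \mu - \pi_2^* \omega
\]
on $P \times \G$, where $\pi_1, \pi_2$ denote the projections onto $P$ and $\G$ respectively. Writing $\eta$ in components with respect to a basis of $\mathfrak{g}$ yields $\dim \G$ scalar $1$-forms which are pointwise linearly independent (since $\pi_2^*\omega$ restricts to a coframe on each $\G$-fibre). The common kernel of $\eta$ therefore defines a smooth distribution $\mathcal{D}$ of rank $\dim P$ on $P \times \G$, transverse to the $\G$-fibres.

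Next I would verify that $\mathcal{D}$ is Frobenius-integrable. Using the hypothesis $d\mu = -\mu \wedge \mu$ together with the Maurer-Cartan equation $d\omega = -\omega \wedge \omega$, one computes
\[
d\eta = \pi_1^*(d\mu) - \pi_2^*(d\omega) = -\pi_1^*(\mu \wedge \mu) + \pi_2^*(\omega \wedge \omega).
\]
Substituting $\pi_1^* \mu = \eta + \pi_2^* \omega$ and expanding gives
\[
d\eta = -\eta \wedge \eta - \eta \wedge \pi_2^*\omega - \pi_2^*\omega \wedge \eta,
\]
which is manifestly a multiple of the components of $\eta$. Hence the ideal generated by $\eta$ is differentially closed and $\mathcal{D}$ is integrable.

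Fix a basepoint $p_0 \in P$ and let $L$ be the maximal connected leaf of $\mathcal{D}$ through $(p_0, e)$. Transversality of $\mathcal{D}$ to the $\G$-fibres implies that $\pi_1|_L \colon L \to P$ is a local diffeomorphism with discrete fibres, i.e.\ a covering map. Since $P$ is connected and simply connected, $\pi_1|_L$ is in fact a diffeomorphism, so $L$ is the graph of a smooth map $F \colon P \to \G$ with $F(p_0) = e$; by construction $F^*\omega = \mu$. For uniqueness, given a second map $F'$ with $(F')^*\omega = \mu$, one considers the pointwise product $H(p) = F(p) \cdot F'(p)^{-1}$ and shows, using the left-invariance of $\omega$, that $H^*\omega = 0$; connectedness of $P$ then forces $H$ to be constant, equal to some $g \in \G$, so $F$ and $F'$ differ by left-translation by $g$.

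The main obstacle is simply the cleanliness of the integrability computation: one must verify that the two Maurer-Cartan-type equations on $P$ and $\G$ combine precisely to give $d\eta \equiv 0$ modulo the components of $\eta$. Once this identity is in hand, the rest of the argument — covering-space theory to produce $F$ from a leaf, and a one-line Lie-group calculation for uniqueness — is standard.
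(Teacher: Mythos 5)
The paper does not actually prove Theorem \ref{thm:MaurerCartan}; it is quoted as a classical result with references to \cite{IvLaSecond} and \cite{EDSBook}, so there is no in-paper argument to compare against. Your proposal is the standard textbook proof of Cartan's theorem (the one given in those references): form the graph ideal generated by $\eta = \pi_1^*\mu - \pi_2^*\omega$ on $P \times \G$, check it is a Frobenius system using the two Maurer--Cartan equations, and take the leaf through $(p_0,e)$. Your integrability computation is correct, and the uniqueness argument via $H = F\cdot (F')^{-1}$ and the identity $(F G)^*\omega = \operatorname{Ad}(G^{-1})F^*\omega + G^*\omega$ is also fine.

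The one genuine jump is the sentence ``$\pi_1|_L$ is a local diffeomorphism with discrete fibres, i.e.\ a covering map.'' A local diffeomorphism with discrete fibres need not be a covering map (consider the inclusion of a punctured disc into a disc), so this implication requires justification. The standard fix is to establish the path-lifting property directly: given a path $\gamma$ in $P$ from $p_0$, its lift into the leaf is obtained by solving the ODE $\omega(\dot g(t)) = \mu(\dot\gamma(t))$ on $\G$, i.e.\ $\dot g = (L_{g})_*\bigl(\mu(\dot\gamma(t))\bigr)$. Because this time-dependent vector field is invariant under left translation, the escape time of solutions is independent of the initial condition, hence solutions exist for all time and every path (and every homotopy of paths) lifts. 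With lifting of paths and homotopies in hand, surjectivity of $\pi_1|_L$ and the covering property follow, and simple connectivity of $P$ then gives the diffeomorphism and the graph map $F$. With that step supplied, your proof is complete and is exactly the argument the cited sources give.
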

 
 \subsection{Associative stabilisers in $\SO(3)$}\label{ssect:assocstab}
 
 As the first step, we classify the subgroups of $\SO(3)$ that stabilise an associative plane. The classification of subgroups of $\SO(3)$ up to conjugacy is well-known: the subgroups are
 \begin{equation*}
 \mathrm{O}(2), \ \SO(2), \ \mathrm{A}_5, \ \mathrm{S}_4, \ \mathrm{A}_4, \ \mathrm{D}_n, \ \mathbb{Z}_n.
 \end{equation*}
 Let $\G$ be a group on this list. If $\G$ stabilises an associative 3-plane, then this 3-plane must be a three-dimensional subrepresentation of $\R^7,$ where $\R^7$ is viewed as a $\G$-representation by restriction of the $\SO(3)$ representation. For each group $\G$ on the list we will determine the fixed associative 3-planes by first finding all three dimensional $\G$-subrepresentations of $\R^7$ and checking which are associative.
 
 For our calculations, we identify the irreducible $\SO(3)$-representation $\R^7$ with the space $\mathcal{H}_3$ of harmonic cubic polynomials in three variables $x, y,$ and $z$. We use the following basis of $\mathcal{H}_3$: 
 \begin{align*}
 e_1 = \tfrac{1}{5} x & \left( 2 x^2 - 3y^2 - 3 z^2 \right) \\
 e_2 = \tfrac{\sqrt{6}}{10} z \left( 4x^2 - y^2 -z ^2 \right), \:\:\: & e_3 = \tfrac{\sqrt{6}}{10} y \left( 4x^2 - y^2 -z ^2 \right) , \\
 e_4 = \tfrac{2\sqrt{15}}{5} x y z, \:\:\: & e_5 = \tfrac{\sqrt{15}}{5} x \left( y^2 -z^2 \right), \\
 e_6 = \tfrac{\sqrt{10}}{10} z \left( 3 y^2 - z^2 \right), \:\:\: & e_7 = \tfrac{\sqrt{10}}{10} y \left( y^2 - 3 z^2 \right),
 \end{align*}
 which agrees with our conventions for $\omega_1, \ldots, \omega_7$ (\ref{eq:MCSO5om}).

 \subsubsection{Continuous stabiliser}\label{sssect:ContStab}
 
 Let $\G$ be a continuous subgroup of $\SO(3).$ The identity component of $\G$ is a closed 1-dimensional subgroup, and hence is conjugate to the group $\SO(2) \subset \SO(3)$ consisting of the rotations about the $x$-axis. Let $R_{\alpha} \in \SO(2)$ be the element representing rotation of an angle $\alpha$ about the $x$-axis. Then $R_{\alpha}$ fixes $\text{span}(e_1),$ acts as rotation by $\alpha$ on $\text{span}(e_2,e_3),$ acts as rotation by $2\alpha$ on $\text{span}(e_4,e_5),$ and acts as rotation by $3\alpha$ on $\text{span}(e_6, e_7).$ Thus, there are exactly three $3$-dimensional subrepresentations of $\R^7$ under this action of $\SO(2)$:
 \begin{align*}
 A_{123} = \text{span}(e_1, e_2, e_3), & & A_{145} = \text{span}(e_1, e_4, e_5), & & A_{167} = \text{span}(e_1, e_7, e_6).
 \end{align*}
 It is easy to check that each of these spaces are preserved by the action of $\O(2)$, and are in fact associative 3-planes.
 
 \begin{prop}
 	The $\SO(3)$-stabiliser of an associative 3-plane $E \in \mathrm{Gr}_{\mathrm{ass}}\! \left( \Hc_3 \right)$ is isomorphic to a continuous subgroup of $\SO(3)$ if and only if $E$ lies in the $\SO(3)$-orbit of one of the following associative 3-planes.
 	\begin{enumerate}
 		\item The associative 3-plane $A_{123}$ with stabiliser $\mathrm{O}(2).$ 
 		\item The associative 3-plane $A_{145}$ with stabiliser $\mathrm{O}(2).$ 
 		\item The associative 3-plane $A_{167}$ with stabiliser $\mathrm{O}(2).$
 	\end{enumerate}
 \end{prop}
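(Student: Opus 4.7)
The plan is to handle the two directions separately, building on the discussion preceding the proposition, which has already identified the candidate subspaces.

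For the ``only if'' direction, suppose $E \subset \Hc_3$ is an associative 3-plane whose $\SO(3)$-stabiliser $\G$ contains a positive-dimensional closed subgroup. The identity component $\G^0$ is then a connected closed 1-dimensional subgroup of $\SO(3)$, hence conjugate to the group of rotations about the $x$-axis. After applying a suitable conjugating element of $\SO(3)$ (which permutes $\SO(3)$-orbits in $\mathrm{Gr}_{\mathrm{ass}}(\Hc_3)$), we may assume $\G^0$ equals this standard $\SO(2)$. The decomposition of $\Hc_3$ into $\SO(2)$-weight spaces recorded just above the proposition, with summands $\R e_1$, $\mathrm{span}(e_2, e_3)$, $\mathrm{span}(e_4, e_5)$, $\mathrm{span}(e_6, e_7)$ of weights $0, 1, 2, 3$, consists of four pairwise non-isomorphic real $\SO(2)$-modules. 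Schur's lemma then forces any 3-dimensional $\SO(2)$-invariant subspace to be a direct sum of isotypic components, and the only possibilities are $A_{123}$, $A_{145}$, and $A_{167}$.

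For the ``if'' direction, I would verify three subclaims. First, that each $A_{1ij}$ is associative is a one-line computation: restricting the explicit formula for $\varphi$ from \eqref{eq:G2StructBer} produces $\omega_{123}|_{A_{123}}$, $\omega_{145}|_{A_{145}}$, and $-\omega_{167}|_{A_{167}}$ respectively, and these are positive volume forms provided $A_{167}$ is oriented as $e_1 \wedge e_7 \wedge e_6$, which accounts for the asymmetric ordering in the statement. Second, each plane has stabiliser exactly $\O(2)$: the reflection in $\O(2) \setminus \SO(2)$ is represented by $\mathrm{diag}(1,1,-1) \in \O(3)$, which sends $z \mapsto -z$ and acts on the basis by fixing $e_1, e_3, e_5, e_7$ and negating $e_2, e_4, e_6$; hence it preserves each of $A_{123}, A_{145}, A_{167}$ setwise. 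That the stabiliser cannot be larger follows because the only subgroup of $\SO(3)$ strictly containing $\O(2)$ is $\SO(3)$ itself, and $\Hc_3$ is $\SO(3)$-irreducible so admits no proper nonzero $\SO(3)$-invariant subspace.

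The main obstacle I anticipate is bookkeeping in the associativity check rather than any genuine mathematical subtlety: the sign $-\omega_{167}$ appearing in $\varphi$ must be absorbed into the choice of orientation of $A_{167}$. Although not required by the statement of the proposition, it is worth recording for later use (and for the matching clause in Theorem \ref{ithm:GaussClass}) that the three $\SO(3)$-orbits listed are in fact distinct; this one sees from the weight $1, 2, 3$ of the $\SO(2)$-action on the 2-dimensional complement $E \cap (\R e_1)^\perp$, which is an $\SO(3)$-invariant of the pair $(\G^0, E)$ and so separates the three cases.
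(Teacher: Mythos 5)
Your argument follows the paper's proof essentially verbatim: both reduce to the weight decomposition of $\Hc_3$ under the standard $\SO(2)$ (weights $0,1,2,3$ on $\R e_1$, $\mathrm{span}(e_2,e_3)$, $\mathrm{span}(e_4,e_5)$, $\mathrm{span}(e_6,e_7)$), conclude that $A_{123}$, $A_{145}$, $A_{167}$ are the only $3$-dimensional invariant subspaces, and then verify associativity directly from the formula (\ref{eq:G2StructBer}) for $\varphi$, with the sign of $\omega_{167}$ absorbed into the orientation $e_1\wedge e_7\wedge e_6$ exactly as the paper's ordering $A_{167}=\mathrm{span}(e_1,e_7,e_6)$ indicates.

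There is one concrete error in your verification that the stabiliser is $\O(2)$ rather than $\SO(2)$: the element $\mathrm{diag}(1,1,-1)$ has determinant $-1$, so it lies in $\O(3)\setminus\SO(3)$ and is not a candidate element of the $\SO(3)$-stabiliser at all. You need a representative of the non-identity component of $\O(2)\subset\SO(3)$, i.e.\ an element of $\SO(3)$ acting as a reflection on the $(y,z)$-plane and by $-1$ on the $x$-axis. Taking $C=\mathrm{diag}(-1,-1,1)$ (the element the paper later uses to generate $\mathrm{D}_n$ from $\mathbb{Z}_n$) one finds that $C$ acts diagonally on $(e_1,\dots,e_7)$ with signs $(-,+,-,+,-,+,-)$, hence preserves each of $A_{123}$, $A_{145}$, $A_{167}$; the same works for $\mathrm{diag}(-1,1,-1)$, with signs $(-,-,+,+,-,-,+)$. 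With that substitution the rest of your argument, including the maximality of $\O(2)$ in $\SO(3)$ and the irreducibility of $\Hc_3$, goes through.
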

 
 \begin{definition}
 	Let $\mathcal{O}_{123}, \mathcal{O}_{145},$ and $\mathcal{O}_{167}$ denote the $\SO(3)$-orbits of $\text{span}(e_1, e_2, e_3), \text{span}(e_1, e_4, e_5),$ and $\text{span}(e_1, e_6, e_7)$ respectively in the Grassmannian of 3-planes in $\R^7.$
 \end{definition}
 
 \subsubsection{Icosahedral stabiliser}\label{sssect:IcoStab}
 
 Let $\ico \cong A_5$ denote the subgroup of $\SO(3)$ generated by
 \begin{align*}
 \left[ \begin{array}{ccc}
 -1 & 0 & 0 \\
 0 & -1 & 0 \\
 0 & 0 & 1
 \end{array} \right] , \left[ \begin{array}{ccc}
 0 & 0 & 1 \\
 1 & 0 & 0 \\
 0 & 1 & 0
 \end{array} \right] , \frac{1}{2} \left[ \begin{array}{ccc}
 1 & -\tau & \tfrac{1}{\tau} \\
 \tau & \tfrac{1}{\tau} & -1 \\
 \tfrac{1}{\tau} & 1 & \tau
 \end{array} \right],
 \end{align*}
 where $\tau = \tfrac{1+\sqrt{5}}{2}.$ The group $\ico$ is the symmetry group of the icosahedron with vertices
 \begin{equation}\label{eq:icos}
 (0, \pm \tau, \pm 1), \:\:\: (\pm 1, 0, \pm \tau), \:\:\: (\pm \tau, \pm 1, 0).
 \end{equation}
  A computation yields that the irreducible decomposition of $\R^7$ under the action of $\ico$ is given by
 \begin{align*}
 \R^7 = A_{\ico} \oplus C,
 \end{align*}
 where
 \begin{align*}
 A_{\ico} &= \text{span} \left(e_1 + \sqrt{3} e_5, \sqrt{3} \left( -1 + \sqrt{5} \right) e_2 - \left( 3 + \sqrt{5} \right) e_6, \sqrt{3} \left( 1 + \sqrt{5} \right) e_3 - \left( -3 + \sqrt{5} \right) e_7  \right), \\
 C &= \text{span} \left( e_4, \sqrt{3} e_1 - e_5, \left( 3 + \sqrt{5} \right) e_2 + \sqrt{3} \left( -1 + \sqrt{5} \right) e_6, \left( -3 + \sqrt{5} \right) e_3 + \sqrt{3} \left( 1 + \sqrt{5} \right) e_7 \right).
 \end{align*}
 The 3-plane $A_{\ico}$ is associative. We denote the $\SO(3)$-orbit of $A_{\ico}$ in the Grassmannian of 3-planes by $\mathcal{O}_{\ico}.$ Note that $C$ is equal to the space of cubics vanishing on the vertices (\ref{eq:icos}) of the icosahedron \cite{Hit09}.
 
 \subsubsection{Octahedral stabiliser}
 
 Let $\oct \cong S_4$ denote the subgroup of $\SO(3)$ generated by
 \begin{align*}
 \left[ \begin{array}{ccc}
 0 & -1 & 0 \\
 1 & 0 & 0 \\
 0 & 0 & 1
 \end{array} \right] , \left[ \begin{array}{ccc}
 0 & 0 & 1 \\
 1 & 0 & 0 \\
 0 & 1 & 0
 \end{array} \right] , \left[ \begin{array}{ccc}
 -1 & 0 & 0 \\
 0 & 0 & 1 \\
 0 & 1 & 0
 \end{array} \right],
 \end{align*}
 the symmetry group of the octahedron with vertices
 \begin{equation}
 (\pm 1, 0, 0), \:\:\: (0, \pm 1, 0), \:\:\: (0, 0, \pm 1).
 \end{equation}
 A computation yields that the irreducible decomposition of $\R^7$ under the action of $\oct$ is given by
 \begin{align}\label{eq:R7OctDecomp}
 \R^7 = A_\oct \oplus \text{span}\left( e_4 \right) \oplus W,
 \end{align}
 where
 \begin{align*}
 A_{\oct} = \text{span} \left( e_1, \sqrt{3} e_2 + \sqrt{5} e_6, \sqrt{3} e_3 - \sqrt{5} e_7 \right), \\
 W = \text{span} \left( e_5, \sqrt{5}e_2 - \sqrt{3} e_6, \sqrt{5} e_3 + \sqrt{3} e_7 \right). 
 \end{align*}
 The representations $A_{\oct}$ and $W$ of $\oct$ are not isomorphic, so a three dimensional subrepresentation of $\R^7$ is either $A_{\oct}$ or $W.$ The 3-plane $A_{\oct}$ is associative, while the 3-plane $W$ is not. We denote the $\SO(3)$-orbit of $A_{\oct}$ in the Grassmannian of 3-planes by $\mathcal{O}_{\oct}.$ Note that $A_{\oct}$ is the 3-plane spanned by the harmonic parts of the perfect cubes $x^3, y^3, z^3.$
 
 \subsubsection{Tetrahedral stabiliser}
 
 The tetrahedral group $\tet \cong A_4$ is the subgroup of $\SO(3)$ generated by 
 \begin{align*}
 \left[ \begin{array}{ccc}
 -1 & 0 & 0 \\
 0 & -1 & 0 \\
 0 & 0 & 1
 \end{array} \right] , \left[ \begin{array}{ccc}
 0 & 0 & 1 \\
 1 & 0 & 0 \\
 0 & 1 & 0
 \end{array} \right].
 \end{align*}
 Since $\tet$ is a subgroup of both $\ico$ and $\oct,$ the associative 3-planes $A_{\ico}$ and $A_{\oct}$ are $\tet$-invariant. The irreducible decomposition of $\R^7$ under $\tet$ is the same as the decomposition (\ref{eq:R7OctDecomp}) under $\oct.$ The spaces $A_{\oct}$ and $W$ are isomorphic as $\tet$ representations, and thus any 3-plane invariant under $\tet$ is of the form
 \begin{align*}
 P_{\theta} = \text{span} & \left( \cos \theta e_1 + \sin \theta e_5 , \cos \theta \left( \sqrt{3} e_2 + \sqrt{5} e_6 \right) + \sin \theta \left( \sqrt{5}e_2 - \sqrt{3} e_6 \right), \right. \\
  &  \left. \cos \theta \left( \sqrt{3} e_3 - \sqrt{5} e_7 \right) + \sin \theta \left( \sqrt{5} e_3 + \sqrt{3} e_7 \right) \right).
 \end{align*}
 Calculation shows that the 3-plane $P_\theta$ is associative if and only if $\theta \in \lbrace 0, 2\pi/3, 4\pi/3 \rbrace.$ When $\theta = 0,$ $P_0=A_{\oct}$ and when $\theta = 2\pi / 3, 4 \pi/3$ we find that $P_{\theta}$ lies in the orbit $\mathcal{O}_{\ico}.$ Thus, there are no associative 3-planes in $\mathcal{H}_3$ with stabiliser exactly equal to $\tet.$
 
 \begin{prop}
 	The $\SO(3)$-stabiliser of an associative 3-plane $E \in \mathrm{Gr}_{\mathrm{ass}} \left( \Hc_3 \right)$ is isomorphic to an irreducibly acting subgroup of $\SO(3)$ if and only if $E$ lies in the $\SO(3)$-orbit of one of the following associative 3-planes.
 	\begin{enumerate}
 		\item The associative 3-plane $A_{\ico}$ with stabiliser $\ico.$ 
 		\item The associative 3-plane $A_{\oct}$ with stabiliser $\oct.$
 	\end{enumerate}
 \end{prop}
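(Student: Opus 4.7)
The plan is to combine the case-by-case computations carried out in \S\ref{sssect:IcoStab} and the two subsections on octahedral and tetrahedral stabilisers. The starting observation is that the closed, irreducibly acting subgroups of $\SO(3)$ are, up to conjugacy, exactly the three polyhedral groups $\ico \cong A_5$, $\oct \cong S_4$, and $\tet \cong A_4$: the continuous proper subgroups $\SO(2)$ and $\mathrm{O}(2)$ act reducibly on $\R^3$, and among finite subgroups the dihedral and cyclic groups also act reducibly. Hence it suffices to determine, for each of the three polyhedral groups $\G$, all associative 3-planes $E \subset \Hc_3$ whose stabiliser under the $\SO(3)$-action contains (and equals) $\G$.

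For $\G = \ico$ and $\G = \oct$ the analysis is straightforward. The $\ico$-decomposition $\Hc_3 = A_{\ico} \oplus C$ has a unique 3-dimensional summand $A_{\ico}$, which is already verified to be associative; any $\ico$-invariant 3-plane must equal this summand. The $\oct$-decomposition $\Hc_3 = A_{\oct} \oplus \mathrm{span}(e_4) \oplus W$ has two 3-dimensional summands that are mutually non-isomorphic as $\oct$-modules, so the only $\oct$-invariant 3-planes are $A_{\oct}$ and $W$; a direct check on $\varphi|_W$ shows $W$ is not associative, leaving only $A_{\oct}$.

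The main obstacle is the case $\G = \tet$. Because $\tet \subset \oct$, the $\tet$-decomposition refines (\ref{eq:R7OctDecomp}), but now $A_{\oct}$ and $W$ are isomorphic as $\tet$-modules. Schur's lemma then produces a $\mathbb{CP}^1$-family of $\tet$-invariant 3-planes $P_\theta$, so associativity is a genuine condition to be checked rather than an automatic consequence of invariance. I would compute $\varphi|_{P_\theta}$ in the basis provided, reducing associativity to a trigonometric equation whose solutions are $\theta \in \{0, 2\pi/3, 4\pi/3\}$. At $\theta = 0$ one recovers $P_0 = A_{\oct}$, whose stabiliser is genuinely $\oct$ rather than $\tet$. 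For $\theta = 2\pi/3, 4\pi/3$, I would verify that $P_\theta \in \mathcal{O}_{\ico}$, either by producing an explicit $\SO(3)$-element mapping $P_\theta$ to $A_{\ico}$, or more conceptually by exhibiting an order-5 rotation that stabilises $P_\theta$ and then invoking the classification of finite subgroups of $\SO(3)$ containing $\tet$ and an element of order 5.

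Combining the three cases: every associative 3-plane in $\Hc_3$ with irreducibly acting stabiliser lies in $\mathcal{O}_{\ico}$ or $\mathcal{O}_{\oct}$, and conversely both $A_{\ico}$ and $A_{\oct}$ realize such stabilisers. This establishes the proposition.
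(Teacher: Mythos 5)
Your proposal is correct and follows essentially the same route as the paper: reduce to the three polyhedral groups, use the unique invariant $3$-plane for $\ico$, the two non-isomorphic summands for $\oct$, and resolve the $\tet$ case by checking associativity along the family $P_\theta$ and identifying the surviving planes with $A_{\oct}$ and points of $\mathcal{O}_{\ico}$. One small correction: since $A_{\oct}$ and $W$ are absolutely irreducible real $\tet$-modules, Schur's lemma gives an $\R\mathbb{P}^1$-family (a circle) of invariant $3$-planes, not a $\mathbb{CP}^1$-family.
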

 
 \subsubsection{Dihedral and cyclic stabiliser}
 
 Every cyclic subgroup of $\SO(3)$ of order $n$ is conjugate to the group $\mathbb{Z}_n \subset \SO(2) $ generated by the element $R_{2\pi /n}$ representing rotation of $2\pi /n$ about the $x$-axis. Under the irreducible representation $\SO(3) \to \SO(7)$, $R_{2\pi /n}$ fixes $e_1,$ acts as rotation by $2\pi / n$ on $\text{span}(e_2,e_3),$ acts as rotation by $4 \pi / n$ on $\text{span}(e_4,e_5),$ and acts as rotation by $6 \pi / n$ on $\text{span}(e_6, e_7).$ The dihedral group $\mathrm{D}_n \subset \SO(3)$ is generated by the elements of $\mathbb{Z}_n$ together with the reflection $C = \mathrm{diag} \left(-1, -1, 1 \right).$
 
 We restrict attention to the cases $n > 2,$ as the description of the associative planes stabilised by $\mathbb{Z}_2$ and $\mathrm{D}_2$ is more complicated than in the other cases. 
 
 If $n > 6,$ the irreducible decomposition under the action of $\mathbb{Z}_n$ is
 \begin{equation}\label{eq:CycIrrDeco6}
 \R^7 = \R e_1 \oplus \textrm{span} \left( e_2, e_3 \right) \oplus \textrm{span} \left( e_4, e_5 \right) \oplus \textrm{span} \left( e_6, e_7 \right),
 \end{equation}
 with the summands mutually non-isomorphic. It follows that any 3-plane fixed by the action of $\mathbb{Z}_n$ for $n > 6$ is equal to one of the associative 3-planes fixed by $\mathrm{O}(2)$ described in \S\ref{sssect:ContStab}.
 
 If $n=6,$ the summand $\textrm{span}(e_6, e_7)$ in the decomposition (\ref{eq:CycIrrDeco6}) reduces further to $\R e_6 \oplus \R e_7,$ while the other summands remain irreducible and are mutually non-isomorphic. The generator $R_{\pi/3}$ acts trivially on $\R \cdot e_1$ and acts by $-1$ on $\R e_6$ and $\R e_7.$ Thus, a 3-plane fixed by $\mathbb{Z}_6$ must be equal to one of the associative 3-planes fixed by $\mathrm{O}(2)$ described in \S\ref{sssect:ContStab}, or one of the 3-planes
 \begin{equation*}
 \textrm{span}(e_2, e_3, e_6), \:\:\: \textrm{span}(e_2, e_3, e_7), \:\:\: \textrm{span}(e_4, e_5, e_6), \:\:\: \textrm{span}(e_4, e_5, e_7).
 \end{equation*}
 Only the 3-planes fixed by $\mathrm{O}(2)$ are associative.
 
 If $n=5,$ the decomposition (\ref{eq:CycIrrDeco6}) is irreducible, but the summands $\textrm{span} \left( e_4, e_5 \right)$ and $\textrm{span} \left( e_6, e_7 \right)$ are isomorphic as $\mathbb{Z}_5$-modules via the map $(e_4, e_5) \mapsto (e_7, e_6).$ Thus, a 3-plane fixed by the action of $\mathbb{Z}_5$ must be equal to either $\textrm{span}(e_1, e_2, e_3)$ or of the form
 \begin{equation*}
 P^{(5)}( a,b ) = \mathrm{span} \left( e_1, \left( a_1 + i a_2 \right) \left( e_4 + i e_5 \right) + \left( b_1 + i b_2 \right) \left( e_6 - i e_7 \right) \right),
 \end{equation*}
 for complex numbers $a = a_1 + ia_2, b = b_1 + i b_2.$ The 3-planes $P(a,b)$ and $P(e^{2 \psi} a, e^{-3 \psi} b)$ are $\SO(3)$-equivalent, and simultaneously scaling $a$ and $b$ by a complex parameter does not change $P(a,b).$ The element $\mathrm{diag}(1, -1, -1) \in \SO(3)$ acts trivially on $\mathrm{span}(e_4, e_5),$ and by $-1$ on $\mathrm{span}(e_6, e_7).$ Thus, every $\mathbb{Z}_5$-invariant 3-plane is $\SO(3)$-equivalent to a plane of the form
 \begin{equation*}
 Q^{(5)}( {\theta} ) = \textrm{span}\left( e_1, \cos \theta e_4 + \sin \theta e_7, \cos \theta e_5 + \sin \theta e_6 \right),
 \end{equation*}
 for $0 \leq \theta < \pi.$ Calculation shows that all of these 3-planes are associative.
 
 If $n=4,$ the summand $\textrm{span}(e_4, e_5)$ in the decomposition (\ref{eq:CycIrrDeco6}) reduces further to $\R e_4 \oplus \R e_5.$ The other summands are irreducible. The generator $R_{\pi/2}$ acts trivially on $\R e_1$ and acts by $-1$ on $\R e_4$ and $\R e_5.$ The irreducible summands $\textrm{span}(e_2, e_3)$ and $\textrm{span}(e_6, e_7)$ are isomorphic as $\mathbb{Z}_4$-modules via the map $(e_2, e_3) \mapsto (e_7, e_6).$ Thus, by reasoning similar to the case $n = 5,$ a 3-plane fixed by the action of $\mathbb{Z}_4$ must be equal to $\textrm{span} (e_1, e_4, e_5)$ or lie in the $\SO(3)$ orbit of one of the 3-planes
 \begin{equation*}
 \begin{aligned}
 Q^{(4\textrm{a})} ( \theta ) &= \textrm{span} \left(e_1, \cos \theta \, e_2 + \sin \theta \, e_7, \cos \theta \,  e_3 + \sin \theta \, e_6 \right), \\
 Q^{(4\textrm{b})} \left( \psi, \theta \right) &= \textrm{span} \left(\cos \psi \, e_4 + \sin \psi \, e_5 , \cos \theta \, e_2 + \sin \theta \, e_7, \cos \theta \, e_3 + \sin \theta \, e_6 \right),
 \end{aligned}
 \end{equation*}
 for $0 \leq \psi, \theta < \pi.$ Calculation shows that all of the 3-planes $Q^{(4\textrm{a})} ( \theta )$ are associative, but none of the planes $Q^{(4\textrm{b})} \left( \psi, \theta \right)$ are.
 
 If $n=3,$ the summand $\textrm{span}(e_6, e_7)$ in the decomposition (\ref{eq:CycIrrDeco6}) reduces further to $\R e_6 \oplus \R e_7.$ The other summands are irreducible. The irreducible summands $\textrm{span}(e_2, e_3 )$ and $\textrm{span}(e_4, e_5)$ are isomorphic as $\mathbb{Z}_3$-modules via the map $\left( e_2, e_3 \right) \mapsto \left( e_5, e_4 \right)$. The generator $R_{2 \pi / 3}$ acts trivially on $\R e_1, \R e_6,$ and $\R e_7.$ By similar reasoning to the previous two cases, a 3-plane fixed by the action of $\mathbb{Z}_3$ must be equal to the associative 3-plane $\textrm{span}(e_1, e_7, e_6)$ with stabiliser $\mathrm{O}(2),$ or one of the planes
 \begin{equation*}
 Q^{(3)} ( \theta, a ) = \textrm{span} \left( a_1 e_1 + a_6 e_6 + a_7 e_7, \cos \theta e_2 + \sin \theta e_5, \cos \theta e_3 + \sin \theta e_4 \right),
 \end{equation*}
 here $a = (a_1, a_6, a_7)$ is an element of $S^2,$ and $0 \leq \theta < 2 \pi.$ Calculation shows that the 3-plane $Q^{(3)} ( \theta, a )$ is associative if and only if $(a_1, a_6, a_7)= ( \cos 2 \theta, \sin 2 \theta, 0).$

 \begin{prop}\label{prop:CycStab}
 	An associative 3-plane $E \in \mathrm{Gr}_{\mathrm{ass}} \left( \Hc_3 \right)$ is stabilised by a cyclic or dihedral subgroup of $\SO(3)$ with $n > 2$ if and only if $E$ lies in the $\SO(3)$-orbit of:
 	\begin{enumerate}
 		\item An associative 3-plane of the form
 		\begin{equation*}
 		 Q^{(5)} (\theta) = \mathrm{span}\left( e_1, \cos \theta e_4 + \sin \theta e_7, \cos \theta e_5 + \sin \theta e_6 \right),
 		\end{equation*}
 		with $\theta \in \left( 0, \pi \right) \setminus \left\lbrace \arccos \sqrt{3/5},  \pi /2 \right\rbrace,$ which has stabiliser $\mathbb{Z}_5.$
 		\item An associative 3-plane of the form 
 		\begin{equation*}
 		Q^{(4\textrm{a})} (\theta) = \mathrm{span} \left(e_1, \cos \theta e_2 + \sin \theta e_7, \cos \theta e_3 + \sin \theta e_6 \right),
 		\end{equation*}
 		with $\theta \in \left( 0, \pi \right) \setminus \left\lbrace \arccos \sqrt{6}/4, \pi /2 \right\rbrace,$ which has stabiliser $\mathbb{Z}_4.$
 		\item An associative 3-plane of the form
 		\begin{equation*}
 		Q^{(3)} (\theta) = \mathrm{span} \left( \cos 2 \theta e_1 + \sin 2 \theta e_6, \cos \theta e_2 + \sin \theta e_5, \cos \theta e_3 + \sin \theta e_4 \right),
 		\end{equation*}
 		with $\theta \in \left( 0, \pi \right) \setminus \left\lbrace \arccos \sqrt{6}/3, \pi /2 \right\rbrace,$ which has stabiliser $\mathbb{Z}_3.$
 	\end{enumerate}
 \end{prop}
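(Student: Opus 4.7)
The plan is to combine the irreducible-decomposition analysis already carried out in this subsection with a short cross-checking argument that determines precisely when the generic cyclic stabiliser $\mathbb{Z}_n$ of a family member jumps to a larger subgroup of $\SO(3)$. The dihedral case will then reduce immediately to the cyclic one.

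Given the case analysis above, the candidate $\mathbb{Z}_n$-invariant associative 3-planes are already exhausted by the three families $Q^{(5)}(\theta)$, $Q^{(4\textrm{a})}(\theta)$, and $Q^{(3)}(\theta)$, together with the $\mathrm{O}(2)$-fixed planes $A_{123}, A_{145}, A_{167}$ (which account entirely for the cases $n \geq 6$). Hence the main remaining task is to determine, for each family, the locus in $\theta$ where the plane in fact has stabiliser strictly larger than $\mathbb{Z}_n$. By the classification of closed subgroups of $\SO(3)$, any such enlargement is either continuous (giving $\SO(2)$ or $\mathrm{O}(2)$, whose associative fixed planes are exactly $A_{123}, A_{145}, A_{167}$) or finite containing $\mathbb{Z}_n$ as a proper subgroup (which for $n = 3, 4, 5$ forces a containment in $\mathrm{A}_5$ or $\mathrm{S}_4$, whose associative fixed planes are exactly $A_{\ico}$ and $A_{\oct}$). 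Thus the problem reduces to deciding for which $\theta$ a member of the family $Q^{(n)}(\theta)$ lies in the $\SO(3)$-orbit of one of the five distinguished planes $A_{123}, A_{145}, A_{167}, A_{\ico}, A_{\oct}$.

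This is a direct computation, which I would carry out family by family. At $\theta = \pi/2$ each family collapses onto an $\mathrm{O}(2)$-stabilised plane: $Q^{(5)}(\pi/2)$ and $Q^{(4\textrm{a})}(\pi/2)$ lie in $\mathcal{O}_{167}$, while $Q^{(3)}(\pi/2)$ lies in $\mathcal{O}_{145}$. The remaining excluded value in each family is identified by solving the $\SO(3)$-equivalence with one of the two exceptional planes, using the explicit spanning sets of $A_{\ico}$ and $A_{\oct}$ given in \S\ref{sssect:IcoStab} and the octahedral paragraph: one finds that $Q^{(5)}(\arccos\sqrt{3/5})$ and $Q^{(3)}(\arccos(\sqrt{6}/3))$ each lie in $\mathcal{O}_{\ico}$, while $Q^{(4\textrm{a})}(\arccos(\sqrt{6}/4))$ lies in $\mathcal{O}_{\oct}$. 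Outside these two values in each family the stabiliser cannot be strictly larger than $\mathbb{Z}_n$, so it is exactly $\mathbb{Z}_n$.

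For the dihedral case, any associative plane stabilised by $\mathrm{D}_n$ with $n > 2$ must also be stabilised by $\mathbb{Z}_n$, and hence appears on the cyclic list. The additional reflection in $\mathrm{D}_n$ imposes one further linear condition on the parameter $\theta$ defining $Q^{(n)}(\theta)$; checking this condition shows that it forces $\theta$ into the already excluded set, so the true stabiliser is in fact $\mathrm{O}(2)$, $\mathrm{A}_5$, or $\mathrm{S}_4$, and no associative plane has stabiliser exactly $\mathrm{D}_n$ with $n > 2$. The main technical obstacle is the explicit orbit-matching at the three critical values $\arccos\sqrt{3/5}$, $\arccos(\sqrt{6}/4)$, and $\arccos(\sqrt{6}/3)$: one must produce explicit elements of $\SO(3)$ carrying each $Q^{(n)}(\theta_\ast)$ onto $A_{\ico}$ or $A_{\oct}$. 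This is routine linear algebra but somewhat laborious, and it is the step where the precise numerical values appearing in the statement of the proposition get pinned down.
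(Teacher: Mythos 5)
Your proposal follows the paper's proof essentially verbatim in structure: the paper likewise observes that the bulk of the work is the case analysis already carried out in the preceding subsection, and reduces what remains to verifying that the excluded values of $\theta$ (and the absence of genuinely dihedral stabilisers) are exactly where the stabiliser jumps to a larger group --- a verification the paper explicitly leaves to the reader. Your sketch of that verification, identifying $\theta = \pi/2$ with the $\mathrm{O}(2)$-orbits and the remaining excluded values with $\mathcal{O}_{\ico}$ or $\mathcal{O}_{\oct}$, supplies more detail than the paper itself and is consistent with its statements (the only quibble being that a finite group properly containing $\mathbb{Z}_n$ need not sit inside $\mathrm{A}_5$ or $\mathrm{S}_4$ --- it could be a larger cyclic or dihedral group --- but those cases are already covered by the $n \geq 6$ analysis, so the reduction to the five distinguished orbits still stands).
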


\begin{proof}
	The bulk of the proof has been completed above. The only remaining task is to verify that the restrictions on $\theta$ in each case ensure that the $\SO(3)$-stabiliser of each member of the families is exactly equal to the claimed group. We leave this to the reader.
\end{proof}

\subsection{Continuous stabiliser}

In this section we classify the associative submanifolds of $\Ber$ whose tangent space has $\SO(3)$-stabiliser isomorphic to a continuous subgroup $\G.$ By the results of \S\ref{ssect:assocstab}, $\G = \mathrm{O}(2)$ are there are three cases to consider: one for each of the orbits $\mathcal{O}_{123}, \mathcal{O}_{145},$ and $\mathcal{O}_{167}.$

\begin{remark}
    The $\SO(3)$-module $\Hc_3$ endows $\R^7$ with a natural $\SO(3)$-structure, which may be viewed as the flat analogue of the $\SO(3)$-structure on $\Ber$ underlying the nearly parallel $\G_2$-structure $\varphi.$ Landsberg \cite{LandsbergMin} has studied minimal submanifolds of $\mathbb{R}^{2n+1} = \Hc_n$ with Gauss map taking values in an orbit of $\SO(3)$ with continuous stabiliser. The results in this section may be thought of as a non-flat analogue of Landsberg's work in the case $n=3.$ 
\end{remark}

\subsubsection{Case I}

Let $f: N \to \Ber$ be an associative 3-fold whose Gauss map lies in $\mathcal{O}_{123}$ for all $p \in N.$ Thus, we may adapt frames on $N$ so that $\omega_4 = \omega_{{5}} = \omega_{{6}} = \omega_{{7}} = 0.$ Let $\mathcal{P}_{123} (N)$ denote the $\mathrm{O}(2)$-subbundle of $f^* \SO(5)$ corresponding to this frame adaptation,
\begin{align*}
\mathcal{P}_{123} (N) = \left\lbrace \left( p , e \right) \in f^* \SO(5) \mid \omega_4 = \omega_{{5}} = \omega_{{6}} = \omega_{{7}} = 0 \right\rbrace.
\end{align*}

On $\mathcal{P}_{123} (N),$ the forms $\omega_1, \omega_2, \omega_3$ are semi-basic, while the form $\gamma_1$ is a connection form. The equations
\begin{align*}
d \left[ \begin{array}{c}
\omega_4 \\
\omega_5 \\
\omega_6 \\
\omega_7
\end{array} \right] = \left[ \begin{array}{c}
0 \\
0 \\
0 \\
0
\end{array} \right] = - \frac{\sqrt{10}}{2} \left[ \begin{array}{ccc}
0 & \gamma_3 & -\gamma_2 \\
0 & \gamma_2 & \gamma_3 \\
0 & 0 & 0 \\
0 & 0 & 0
\end{array} \right] \wedge \left[ \begin{array}{c}
\omega_{{1}} \\
\omega_2 \\
\omega_{{3}}
\end{array} \right]
\end{align*}
imply that there exist functions $a_2$ and $a_3$ on $\mathcal{P}_{123}(N)$ with
\begin{equation*}
\gamma_2 = \tfrac{1}{\sqrt{6}} \left( a_2 \omega_2 + a_3 \omega_3 \right), \:\:\: \gamma_3 = \tfrac{1}{\sqrt{6}} \left( a_3 \omega_2 - a_2 \omega_3 \right). 
\end{equation*}
The structure equations (\ref{eq:BerStruct1}) and (\ref{eq:BerStruct2}) restricted to $\mathcal{P}_{123}(N)$ imply
\begin{equation}\label{eq:123Cart1}
\begin{aligned}
d \omega_1 &=  \left( 2 a_3 - \tfrac{2}{3} \right) \omega_2 \wedge \omega_3, \\
d \omega_2 & =  - \gamma_1 \wedge \omega_3 - a_2 \omega_1 \wedge \omega_2 + \left( - a_3 + \tfrac{2}{3} \right) \omega_1 \wedge \omega_3, \\
d \omega_3 & = \gamma_1 \wedge \omega_2 + \left( a_3 - \tfrac{2}{3} \right) \omega_1 \wedge \omega_2  - a_2 \omega_1 \wedge \omega_3, \\
d \gamma_1 & = \left( \tfrac{1}{6} a_2^2 + \tfrac{1}{6} a_3^2 + \tfrac{4}{9} \right) \omega_2 \wedge \omega_3.
\end{aligned}
\end{equation}
The exterior derivatives of (\ref{eq:123Cart1}) together with the structure equations (\ref{eq:BerStruct2}) imply that there exist functions $b_2, b_3$ on $\mathcal{P}_{123}(N)$ such that
\begin{equation}\label{eq:123da}
\begin{aligned}
d a_2 & = \left(a_2^2 - a_3^2 + \tfrac{2}{3} a_3 + \tfrac{8}{3} \right) \omega_1 + b_2 \omega_2 + b_3 \omega_3, \\
d a_3 & = 2 a_2 \left( a_3 - \tfrac{1}{3} \right) \omega_1 + b_3 \omega_2 - b_2 \omega_3. 
\end{aligned}
\end{equation}
It follows that the functions $ a_2,  a_3$ descend to give well-defined functions on $N.$

The structure equations (\ref{eq:123Cart1}) imply that the differential system $\mathcal{R} = \left\langle \omega_2, \omega_3 \right\rangle$ on $\mathcal{P}_{123} (N)$ is a Frobenius system. Thus the 4-manifold $\mathcal{P}_{123}(N)$ is foliated by the integral surfaces of $\mathcal{R}.$ The equations $\omega_2 = \omega_3 = \omega_4 = \ldots = \omega_7 = 0$ hold on the integral surfaces on $\mathcal{R},$ so it follows that the integral surfaces of $\mathcal{R}$ are $\mathcal{D}$-surfaces (see \S\ref{sect:Ruled}), and their projections to $N$ are $\mathcal{C}$-curves. Thus, $N$ is foliated by $\mathcal{C}$-curves, i.e. it is a ruled associative in the sense of Definition \ref{defn:ruled}, and Theorem $\ref{thm:RuledAssoc}$ applies.

Let $S$ denote the $J$-holomorphic curve in $\gra$ corresponding to $N.$ By equations (\ref{eq:omegazeta}), the condition $\omega_4 = \ldots = \omega_7 = 0$ on $\mathcal{P}_{123}(N)$ implies that $\zeta_2 = \zeta_3 = 0$ on $\mathcal{G}(S).$ It follows from Proposition \ref{prop:JNK} and the discussion following it that $S$ is the Gauss lift of a superminimal surface in $S^4$ or is a fibre of the map $\gra \to \mathbb{CP}^3.$ This proves the following theorem.

\begin{thm}
	Let $N \to \Ber$ be an associative submanifold of the Berger space such that the Gauss map of $N$ has image contained in $\mathcal{O}_{123}.$ Then $N$ is a ruled associative submanifold which is locally the $\pi \circ \lambda^{-1}$-image of the Gauss lift to $\gra$ of a superminimal surface in $S^4$ or of a fibre of the map $\gra \to \mathbb{CP}^3.$
\end{thm}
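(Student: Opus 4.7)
The strategy is to reduce the structure group to $\mathrm{O}(2)$ along $N$, extract enough of the reduced structure equations to see that $N$ is $\mathcal{C}$-ruled, and then translate the resulting data into the $J$-holomorphic picture of Theorem \ref{thm:RuledAssoc}.

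Since the Gauss map of $f \colon N \to \Ber$ takes values in the single $\SO(3)$-orbit $\mathcal{O}_{123}$, I adapt frames to the $\mathrm{O}(2)$-subbundle $\mathcal{P}_{123}(N) \subset f^*\SO(5)$ on which $\omega_4 = \omega_5 = \omega_6 = \omega_7 = 0$. Differentiating these four equations using the Berger structure equations (\ref{eq:BerStruct1}) and applying Cartan's lemma produces functions $a_2, a_3$ on $\mathcal{P}_{123}(N)$ so that $\gamma_2$ and $\gamma_3$ become linear combinations of $\omega_2$ and $\omega_3$. Substituting back into (\ref{eq:BerStruct1}) yields explicit expressions for $d\omega_1, d\omega_2, d\omega_3$ and $d\gamma_1$; crucially, $d\omega_2$ and $d\omega_3$ land in the ideal $\langle \omega_2, \omega_3\rangle$, so the rank-two Pfaffian system $\mathcal{R} = \langle \omega_2, \omega_3\rangle$ on $\mathcal{P}_{123}(N)$ is Frobenius.

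The integral leaves of $\mathcal{R}$ satisfy $\omega_2 = \omega_3 = \omega_4 = \cdots = \omega_7 = 0$, so they are $\mathcal{D}$-surfaces in the sense of Section \ref{sect:Ruled}; projecting to $\Ber$ foliates $N$ by $\mathcal{C}$-curves. Hence $N$ is $\mathcal{C}$-ruled in the sense of Definition \ref{defn:ruled}, and Theorem \ref{thm:RuledAssoc} applies, producing a $J$-holomorphic curve $S \subset \gra$ such that $N$ is locally $(\pi \circ \lambda^{-1})(S)$.

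To finish, equations (\ref{eq:omegazeta}) show that $\omega_4 + i\omega_5$ is a nonzero scalar multiple of $\zeta_2$ and $\omega_6 - i\omega_7$ is a nonzero scalar multiple of $\zeta_3$, so the frame adaptation forces $\zeta_2 = \zeta_3 = 0$ on the bundle $\mathcal{G}(S)$ of Theorem \ref{thm:RuledAssoc}. Proposition \ref{prop:JNK} then identifies $S$ as the unique $\zeta_2 = 0$ lift of a $J_{\text{NK}}$-holomorphic curve $S' \subset \mathbb{CP}^3$, and the additional condition $\zeta_3 = 0$ says exactly that $S'$ is horizontal for the twistor fibration $\mathbb{CP}^3 \to S^4$; by Bryant's theorem \cite{Bry82}, horizontal $J_{\text{NK}}$-holomorphic curves are twistor lifts of superminimal surfaces in $S^4$, so $S$ is the Gauss lift to $\gra$ of such a surface. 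The residual possibility, in which the image of $S$ in $\mathbb{CP}^3$ collapses to a point, is precisely the case $S \subset$ a single $\mathbb{CP}^1$-fibre of $\gra \to \mathbb{CP}^3$. I expect the main technical obstacle to be the careful bookkeeping that confirms the Frobenius property of $\langle \omega_2, \omega_3\rangle$ after the Cartan-lemma substitution; once that is in hand, the translation to superminimal data is immediate from Theorem \ref{thm:RuledAssoc}, Proposition \ref{prop:JNK}, and Bryant's classification.
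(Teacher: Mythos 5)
Your proposal is correct and follows essentially the same route as the paper: the same frame adaptation to $\mathcal{P}_{123}(N)$, the same Cartan-lemma derivation of $a_2, a_3$ and the Frobenius system $\langle \omega_2, \omega_3\rangle$, and the same passage via Theorem \ref{thm:RuledAssoc}, equation (\ref{eq:omegazeta}), and Proposition \ref{prop:JNK} to the superminimal/fibre dichotomy. Your closing paragraph merely spells out the identification of $\zeta_3 = 0$ with horizontality and the degenerate fibre case, which the paper leaves to "the discussion following" Proposition \ref{prop:JNK}.
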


We now classify the homogeneous examples of this type.

\begin{thm}
	Let $N \to \Ber$ be a homogeneous associative submanifold of the Berger space such that the Gauss map of $N$ has image contained in $\mathcal{O}_{123}.$ Then either:
	\begin{enumerate}
		\item $N$ is $\SO(5)$-equivalent to (an open subset of) the orbit of the Veronese surface $h^{-1} \cdot \Sigma_0$ under the action of the group $\SO(2) \times \SO(3)_{\mathrm{std}},$ where
		\begin{equation*}
		h = \begin{small}
		\left[ \begin {array}{ccccc} 0&0&1&0&0\\ 0&0&0&1&0
		\\ 0&0&0&0&1\\ 1&0&0&0&0
		\\ 0&1&0&0&0 \end {array} \right].
		\end{small}
		\end{equation*}
		\item $N$ is $\SO(5)$-equivalent to (an open subset of) the orbit of the standard Veronese surface $\Sigma_0$ under the action of the group $\mathrm{U}(2).$
	\end{enumerate} 
\end{thm}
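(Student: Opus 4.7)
The plan is to exploit the structure equations (\ref{eq:123Cart1}) and (\ref{eq:123da}) obtained in the preceding analysis for associatives with Gauss map in $\mathcal{O}_{123}$. Recall these produced two functions $a_2, a_3$ on $\mathcal{P}_{123}(N)$ which, by (\ref{eq:123da}), are constant on the fibres of $\mathcal{P}_{123}(N) \to N$ and hence descend to functions on $N$. If $N$ is homogeneous under a subgroup $K \subset \SO(5)$, then $K$ lifts to $\mathcal{P}_{123}(N)$ preserving all Maurer-Cartan forms, so $a_2$ and $a_3$ are $K$-invariant and therefore constant.

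Setting $da_2 = da_3 = 0$ in (\ref{eq:123da}) and using that $\omega_1, \omega_2, \omega_3$ are linearly independent, I would immediately get $b_2 = b_3 = 0$, together with the polynomial system
\begin{align*}
a_2^2 - a_3^2 + \tfrac{2}{3}a_3 + \tfrac{8}{3} &= 0, \\
2\,a_2\!\left( a_3 - \tfrac{1}{3} \right) &= 0.
\end{align*}
The second equation forces $a_2 = 0$ or $a_3 = \tfrac{1}{3}$; the latter is incompatible with the first over $\R$, so $a_2 = 0$ and $a_3 \in \{2, -\tfrac{4}{3}\}$. For each of these two values, the structure equations (\ref{eq:123Cart1}) (together with the $\mathfrak{so}(3)$-valued equation for $d\gamma_1$) become a closed Maurer-Cartan system in the coframe $(\omega_1, \omega_2, \omega_3, \gamma_1)$ on the $4$-manifold $\mathcal{P}_{123}(N)$. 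By Theorem \ref{thm:MaurerCartan}, $N$ is then determined up to the action of $\SO(5)$ by the value of $a_3$.

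It remains to match each value of $a_3$ with the claimed homogeneous orbit. For $a_3 = -\tfrac{4}{3}$, I would observe directly from (\ref{eq:omegazeta}) that the relations $\gamma_2 = -\tfrac{4}{3\sqrt{6}}\,\omega_3$, $\gamma_3 = -\tfrac{4}{3\sqrt{6}}\,\omega_2$ are equivalent to $\zeta_1 = 0$ (on top of $\zeta_2 = \zeta_3 = 0$), so the associated $J$-holomorphic curve in $\gra$ is a fibre of $\gra \to \mathbb{CP}^3$. Such a fibre is a $\mathrm{U}(2)$-orbit, and its $\pi \circ \lambda^{-1}$-image is the $\mathrm{U}(2)$-orbit of $\Sigma_0$ in $\Ber$, giving case (2). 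For $a_3 = 2$, I would guess the candidate orbit as the $\SO(2) \times \SO(3)_{\mathrm{std}}$-orbit of $h^{-1} \cdot \Sigma_0$, verify it is $3$-dimensional (by checking that $(\SO(2) \times \SO(3)_{\mathrm{std}}) \cap h^{-1}\SO(3)h$ is $1$-dimensional), verify it is associative with Gauss map in $\mathcal{O}_{123}$, and finally compute the pair $(a_2, a_3)$ on an adapted coframe to confirm the value. Cartan's theorem then rules out any further examples.

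I expect the main obstacle to be the guess-and-verify step for the $a_3 = 2$ case: unlike the $a_3 = -\tfrac{4}{3}$ case, which is characterised invariantly by $\zeta_1 = 0$ and therefore lands on a canonical geometric object, the $a_3 = 2$ case has no such obvious geometric distinguishing feature. One must identify, by trial if necessary, which specific point of $\Ber$ and which specific subgroup action produces a 3-dimensional orbit realising the value $a_3 = 2$. The presence of the intertwiner $h$ (which cyclically permutes the basis $e_1, \ldots, e_5$) suggests that finding the right basepoint amounts to arranging for the tangent plane of the orbit at that basepoint to lie correctly in $\mathcal{O}_{123}$ rather than one of the other two continuous-stabiliser orbits.
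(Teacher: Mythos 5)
Your proposal is correct, and its skeleton — homogeneity forces $a_2,a_3$ constant, equations (\ref{eq:123da}) then force $a_2=0$ and $a_3\in\{2,-\tfrac43\}$, and Cartan's Theorem \ref{thm:MaurerCartan} pins down $N$ up to ambient motion once $a_3$ is fixed — is exactly the paper's. Where you diverge is in identifying the two models. The paper does this by writing out $\mu|_{\mathcal{P}_{123}(N)}$ explicitly in the coframe $(\omega_1,\omega_2,\omega_3,\gamma_1)$ for each value of $a_3$ and observing that it takes values in a $4$-dimensional subalgebra conjugate to $\mathfrak{so}(2)\oplus\mathfrak{so}(3)_{\mathrm{std}}$ (for $a_3=2$) or to $\mathfrak{u}(2)$ (for $a_3=-\tfrac43$); this makes your feared ``guess-and-verify'' step for $a_3=2$ unnecessary, since e.g.\ $\mu_{24}=\tfrac{2-a_3}{\sqrt6}\,\omega_2$ and its companions all vanish when $a_3=2$, leaving only $\mu_{12},\mu_{13},\mu_{23},\mu_{45}$ nonzero, and the subalgebra $\mathrm{span}(E_{12},E_{13},E_{23},E_{45})$ is visibly the $h$-conjugate of $\mathfrak{so}(2)\oplus\mathfrak{so}(3)_{\mathrm{std}}$. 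Your route for $a_3=-\tfrac43$ via (\ref{eq:omegazeta}) — showing the relation between $\gamma_2-i\gamma_3$ and $\omega_2+i\omega_3$ reads $\sqrt6(2-a_3)\zeta_1+(4+3a_3)\zeta_4=0$, which for $a_3=-\tfrac43$ forces $\zeta_1=0$ and hence a fibre of $\gra\to\mathbb{CP}^3$ — is a genuinely different and rather clean identification that plugs directly into the ruled-associative picture; note that it buys you the correct pairing ($a_3=-\tfrac43\leftrightarrow\mathrm{U}(2)$, $a_3=2\leftrightarrow\SO(2)\times\SO(3)_{\mathrm{std}}$), which is consistent with the paper's remarks after the theorem but is the opposite of the labelling in the paper's own proof, where the two displayed Maurer--Cartan forms appear to have been attached to the wrong values of $a_3$. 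The only incomplete point in your sketch is that the $a_3=2$ verification is described rather than executed; as noted, it reduces to reading off the subalgebra, so no genuine gap remains.
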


\begin{proof}
	Since $N$ is homogeneous, the functions $a_2$ and $a_3$ are constant on $N$. It follows from equations (\ref{eq:123da}) that $a_2 = 0$ and $a_3$ is equal to either $-4/3$ or $2.$
	
	First, suppose $a_3 = -4/3.$ Let
	\begin{align*}
	\kappa & = \tfrac{2}{3}\omega_1 -2 \gamma_1, & \chi_2 & = \tfrac{5\sqrt{2}}{3}\,\omega_3, \\
	\chi_1 & = -\tfrac{4}{3} \omega_1 -\gamma_1, & \chi_3 & = \tfrac{5\sqrt{2}}{3}\,\omega_2.
	\end{align*}	
	Then the Maurer-Cartan form of $\SO(5)$ restricted to $\mathcal{P}_{123}(N)$ satisfies
	\begin{equation*}
	h^{-1} \, \mu |_{\mathcal{P}_{123}(N)} \, h = \begin{small} \left[ \begin {array}{ccccc} 0&\kappa&0&0&0\\ -
	\kappa&0&0&0&0\\ 0&0&0&\chi_{{3}}&-\chi_{{2}}
	\\ 0&0&-\chi_{{3}}&0&\chi_{{1}}\\ 0
	&0&\chi_{{2}}&-\chi_{{1}}&0\end {array} \right]. \end{small}
	\end{equation*}
	It follows from Cartan's Theorem \ref{thm:MaurerCartan} that $N$ is $\SO(5)$-equivalent to (an open subset of) the homogeneous submanifold described in part 1 of the theorem.
	
	Next, suppose $a_3 = 2.$ Let
	 \begin{align*}
	 \nu & = -\tfrac{1}{3}\omega_1 -\tfrac{3}{2} \gamma_1, & \xi_2 & = \tfrac{5\sqrt{6}}{9}\,\omega_2, \\
	 \xi_1 & = -\omega_1 + \tfrac{1}{2} \gamma_1, & \xi_3 & = -\tfrac{5\sqrt{6}}{9}\,\omega_3.
	 \end{align*}
	 Then the Maurer-Cartan form of $\SO(5)$ restricted to $\mathcal{P}_{123}(N)$ satisfies
	 \begin{equation*}
	 \mu |_{\mathcal{P}_{123}(N)} = \left[ \begin {array}{ccccc} 0&0&0&0&0\\ 0&0&\nu+
	 	\xi_{{1}}&\xi_{{2}}&\xi_{{3}}\\ 0&-\nu-\xi_{{1}}&0&-
	 	\xi_{{3}}&\xi_{{2}}\\ 0&-\xi_{{2}}&\xi_{{3}}&0&\nu-
	 	\xi_{{1}}\\ 0&-\xi_{{3}}&-\xi_{{2}}&-\nu+\xi_{{1}}&0
	 	\end {array} \right],
	 \end{equation*}
	 and it follows from Cartan's Theorem \ref{thm:MaurerCartan} that $N$ is $\SO(5)$-equivalent to (an open subset of) the homogeneous submanifold described in part 2 of the theorem.
\end{proof}

The topology of the example homogeneous under $\SO(2) \times \SO(3)_{\mathrm{std}}$ is given by the quotient $\SO(2) \times \SO(3)_{\mathrm{std}} / \mathrm{O}(2)_{1,2},$ which is diffeomorphic to $\SO(3) / \mathbb{Z}_2$. As a ruled associative, it corresponds via Theorem \ref{thm:RuledAssoc} to the tangent lift of a totally geodesic $S^2 \subset S^4.$

The topology of the example homogeneous under $\mathrm{U}(2)$ is given by the quotient $\mathrm{U}(2) / \mathrm{O}(2)_{1,2},$ which is diffeomorphic to $S^3.$ As a ruled associative it corresponds via Theorem \ref{thm:RuledAssoc} to a fibre of the map $\gra \to \mathbb{CP}^3.$

%


\subsubsection{Case II}

Define a differential ideal $\mathcal{I}_{145}$ with independence condition $\Omega_{145}$ on $\SO(5)$ by
\begin{align*}
\mathcal{I}_{145} = \left\langle \omega_2, \omega_3, \omega_6, \omega_7 \right\rangle, \:\:\: \Omega_{145} = \omega_1 \wedge \omega_4 \wedge \omega_5 \wedge \gamma_1.
\end{align*}
By construction, the integral manifolds of $\left( \mathcal{I}_{145}, \Omega_{145} \right)$ are in bijection with $\O(2)$-adapted coframe bundles of associative submanifolds of $\Ber$ whose Gauss map has image contained in $\mathcal{O}_{145}.$

Using the structure equations of $\SO(5),$ we compute
\begin{align*}
d \left[ \begin{array}{c}
\omega_2 \\
\omega_3 \\
\omega_6 \\
\omega_7
\end{array} \right] \equiv - \sqrt{2} \left[ \begin{array}{ccc}
-\sqrt{3} \gamma_2 & -\tfrac{\sqrt{5}}{2} \gamma_3 & -\tfrac{\sqrt{5}}{2} \gamma_2 \\
\sqrt{3} \gamma_3 & \tfrac{\sqrt{5}}{2} \gamma_2 & -\tfrac{\sqrt{5}}{2} \gamma_3 \\
0 & \tfrac{\sqrt{3}}{2} \gamma_3 & -\tfrac{\sqrt{3}}{2} \gamma_2 \\
0 & \tfrac{\sqrt{3}}{2} \gamma_2 & \tfrac{\sqrt{3}}{2} \gamma_3
\end{array} \right] \wedge \left[ \begin{array}{c}
\omega_1 \\
\omega_4 \\
\omega_5
\end{array} \right] \:\:\: \textup{mod} \:\: I_{145}
\end{align*}
It follows that on an integral manifold of $\left( \mathcal{I}_{145}, \Omega_{145} \right)$ we have $\gamma_2 = \gamma_3 = 0.$

\begin{thm} \label{thm:145Case}
	Let $N^3 \to \Ber$ be an associative submanifold of the Berger space such that the Gauss map of $N$ has image contained in $\mathcal{O}_{145}.$ Then $N$ is $\SO(5)$-equivalent to (an open subset of) the orbit of the Veronese surface $h^{-1} \cdot \Sigma_0$ under the action of the group $ {\SO(2) \times \SO(3)_{\mathrm{std}}},$ where
	\begin{align*}
	h = \begin{small} \left[ \begin {array}{ccccc} 0&0&1&0&0\\ 1&0&0&0&0
	\\ 0&1&0&0&0\\ 0&0&0&1&0
	\\ 0&0&0&0&1\end {array} \right]
	 \end{small} \in \SO(5).
	\end{align*}
\end{thm}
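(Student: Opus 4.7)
The plan is to show that the pulled-back Maurer-Cartan form $\mu|_{\mathcal{P}_{145}(N)}$ takes values in a fixed Lie subalgebra $\mathfrak{k} \subset \mathfrak{so}(5)$ isomorphic to $\mathfrak{so}(2) \oplus \mathfrak{so}(3)$, and then apply Cartan's Theorem \ref{thm:MaurerCartan} to identify $N$ with the claimed orbit. Let $\mathcal{P}_{145}(N) \to N$ be the $\mathrm{O}(2)$-subbundle of $f^*\SO(5)$ cut out by $\omega_2 = \omega_3 = \omega_6 = \omega_7 = 0$. The computation immediately preceding the theorem shows that also $\gamma_2 = \gamma_3 = 0$ on $\mathcal{P}_{145}(N)$, so $\mu$ is built entirely from the four 1-forms $\omega_1, \omega_4, \omega_5, \gamma_1$, which are linearly independent by the independence condition $\Omega_{145}$; this matches $\dim \mathcal{P}_{145}(N) = 4$.

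Next, I would inspect the explicit matrices (\ref{eq:MCSO5}) and (\ref{eq:MCSO5om}) with $\omega_2 = \omega_3 = \omega_6 = \omega_7 = \gamma_2 = \gamma_3 = 0$ substituted. The entries of $\mu$ then block-decompose: the $\{2,3\}$ rows and columns decouple from the $\{1,4,5\}$ rows and columns. The $\{2,3\}$-block contributes an $\mathfrak{so}(2)$ summand (built from $\gamma_1$ and $\omega_1$ with some coefficient), and the $\{1,4,5\}$-block contributes an $\mathfrak{so}(3)$ summand (built from $\omega_1, \omega_4, \omega_5, \gamma_1$). Hence $\mu|_{\mathcal{P}_{145}(N)}$ is valued in a subalgebra $\mathfrak{k} \cong \mathfrak{so}(2) \oplus \mathfrak{so}(3)$, and the associated connected subgroup $K \subset \SO(5)$ is the $\SO(5)$-conjugate of $\SO(2) \times \SO(3)_{\mathrm{std}}$ by the permutation matrix $h$ written in the theorem statement.

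Finally, the $\SO(5)$-structure equations restricted to $\mathcal{P}_{145}(N)$ automatically become the Maurer-Cartan equations of $K$. Applying Cartan's Theorem \ref{thm:MaurerCartan} (using the fact that the tautological inclusion $\mathcal{P}_{145}(N) \hookrightarrow \SO(5)$ and any integration into $K$ must agree up to left translation), the image of $\mathcal{P}_{145}(N)$ in $\SO(5)$ lies in a single left coset $g_0 K$. Projecting to $B = \SO(5)/\SO(3)$, the submanifold $N$ lies in the 3-dimensional orbit $g_0 \cdot K \cdot \Sigma_0$, which via the identity $K \cdot \Sigma_0 = h \cdot \bigl((\SO(2) \times \SO(3)_{\mathrm{std}}) \cdot h^{-1} \Sigma_0\bigr)$ is $\SO(5)$-equivalent to the orbit in the theorem; since both are 3-dimensional, $N$ is an open subset. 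The main obstacle is the linear-algebraic bookkeeping in step two: verifying the block decomposition and correctly identifying $K$ with the conjugated group requires keeping careful track of coefficients from (\ref{eq:MCSO5}), (\ref{eq:MCSO5om}) and the conjugation by $h$.
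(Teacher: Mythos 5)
Your proposal is correct and follows essentially the same route as the paper: restrict the Maurer--Cartan form to the adapted bundle using $\omega_2=\omega_3=\omega_6=\omega_7=\gamma_2=\gamma_3=0$, observe that it becomes block-diagonal with an $\mathfrak{so}(2)$ block on the $\{2,3\}$ indices and an $\mathfrak{so}(3)$ block on the $\{1,4,5\}$ indices (the paper makes this explicit with the forms $\kappa = \tfrac{4}{3}\omega_1+\gamma_1$, $\chi_1 = -\tfrac{2}{3}\omega_1+2\gamma_1$, $\chi_4 = \tfrac{2\sqrt{5}}{3}\omega_4$, $\chi_5=\tfrac{2\sqrt{5}}{3}\omega_5$ and the conjugation by $h$), and conclude via Cartan's Theorem \ref{thm:MaurerCartan}. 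The only care needed beyond what you wrote is the bookkeeping you already flagged, namely which side the conjugation by $h$ acts on when converting the coset $g_0K$ into the stated orbit of $h^{-1}\cdot\Sigma_0$.
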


\begin{proof}
Let $P$ be an integral manifold of $\left( \mathcal{I}_{145}, \Omega_{145} \right)$.  Let
\begin{align*}
\kappa & = \textstyle \frac{4}{3}\omega_1 + \gamma_1 & \chi_4 & = \textstyle\frac{2\sqrt{5}}{3}\,\omega_4 \\
\chi_1 & = \textstyle -\frac{2}{3}\omega_1 + 2\gamma_1 & \chi_5 & = \textstyle\frac{2\sqrt{5}}{3}\,\omega_5
\end{align*}
Then the Maurer-Cartan form $\mu$, restricted to $P$, satisfies
\begin{equation*}
h\mu|_{P}h^{-1} = \begin{small}
\begin{bmatrix}
0 & -\kappa & 0 & 0 & 0\\
\kappa & 0 & 0 & 0 & 0 \\
0 & 0 & 0 & -\chi_4 & \chi_5 \\
0 & 0 & \chi_4 & 0 & -\chi_1 \\
0 & 0 & -\chi_5 & \chi_1 & 0
\end{bmatrix}.
\end{small}
\end{equation*}
An application of Cartan's Theorem \ref{thm:MaurerCartan} concludes the proof.
\end{proof}

A calculation shows that the associative $3$-fold of Theorem \ref{thm:145Case} is totally geodesic and has Ricci curvature given by $\frac{2}{9}\!\left( \omega_1^2 + 9\omega_4^2 + 9\omega_5^2\right)$. Its topology is given by the quotient $\SO(2) \times \SO(3)_{\mathrm{std}} / \mathrm{O}(2)_{1,2},$ which is diffeomorphic to $\SO(3) / \mathbb{Z}_2.$ This associative is ruled, and corresponds under Theorem \ref{thm:RuledAssoc} to the normal lift of a totally geodesic $S^2 \subset S^4.$ This construction is generalised in \S\ref{ssect:D4Assoc}.


\subsubsection{Case III}

Define a differential ideal $\mathcal{I}_{167}$ with independence condition $\Omega_{167}$ on $\SO(5)$ by
\begin{align*}
\mathcal{I}_{167} = \left\langle \omega_2, \omega_3, \omega_4, \omega_5 \right\rangle, \:\:\: \Omega_{167} = \omega_1 \wedge \omega_6 \wedge \omega_7 \wedge \gamma_1.
\end{align*}
By construction, the integral manifolds of $\left( \mathcal{I}_{167}, \Omega_{167} \right)$ are in bijection with $\O(2)$-adapted coframe bundles of associative submanifolds of $\Ber$ whose Gauss map has image contained in $\mathcal{O}_{167}.$

Using the structure equations of $\SO(5),$ we compute
\begin{align*}
d \left[ \begin{array}{c}
\omega_2 \\
\omega_3 \\
\omega_4 \\
\omega_5
\end{array} \right] \equiv - \sqrt{6} \left[ \begin{array}{ccc}
- \gamma_2 & 0 & 0 \\
\gamma_3 & 0 & 0 \\
0 & -\tfrac{1}{2} \gamma_2 & -\tfrac{1}{2} \gamma_3 \\
0 & -\tfrac{1}{2} \gamma_3 & \tfrac{1}{2} \gamma_2
\end{array} \right] \wedge \left[ \begin{array}{c}
\omega_1 \\
\omega_6 \\
\omega_7
\end{array} \right] \:\:\: \textup{mod} \:\: I_{167}
\end{align*}
It follows that on an integral manifold of $\left( \mathcal{I}_{167}, \Omega_{167} \right)$ we have $\gamma_2 = \gamma_3 = 0.$

\begin{thm}\label{thm:167Case}
	Let $N^3 \to \Ber$ be an associative submanifold of the Berger space such that the Gauss map of $N$ has image contained in $\mathcal{O}_{167}$. Then $N$ is $\SO(5)$-equivalent to (an open subset of) the orbit of the Veronese surface $h^{-1} \cdot \Sigma_0$ under the action of the group $\mathrm{U}(2)$, where
	\begin{equation*}
	 h = \begin{small} \left[ \begin {array}{ccccc} -1&0&0&0&0\\ 0&1&0&0&0
	\\ 0&0&1&0&0\\ 0&0&0&-1&0
	\\ 0&0&0&0&1\end {array} \right] \end{small} \in \SO(5).
	\end{equation*}
\end{thm}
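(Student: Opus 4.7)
I will mirror the proof of Theorem \ref{thm:145Case}. From the congruence immediately preceding the statement, any integral manifold $P$ of $(\mathcal{I}_{167}, \Omega_{167})$ satisfies $\gamma_2 = \gamma_3 = 0$ in addition to the defining vanishings $\omega_2 = \omega_3 = \omega_4 = \omega_5 = 0$. Hence $\mu|_P$ depends linearly on only the four remaining 1-forms $\omega_1, \omega_6, \omega_7, \gamma_1$, and direct substitution into (\ref{eq:MCSO5}) and (\ref{eq:MCSO5om}) produces an explicit skew-symmetric $5 \times 5$ matrix of 1-forms on $P$.

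The key step is to rescale the four surviving forms and conjugate by the diagonal sign matrix $h \in \SO(5)$ specified in the statement so that the resulting matrix takes values in the copy of $\mathfrak{u}(2) \subset \mathfrak{so}(5)$ described in \S2.6, namely the commutant of the matrix of the K\"ahler form $e^{23} + e^{45}$. Explicitly, I set
\begin{equation*}
\kappa = \gamma_1 + \tfrac{4}{3}\omega_1, \quad \lambda = 2\gamma_1 - \tfrac{2}{3}\omega_1, \quad \eta_1 = \tfrac{\sqrt{10}}{3}\omega_6, \quad \eta_2 = \tfrac{\sqrt{10}}{3}\omega_7,
\end{equation*}
and verify by direct matrix multiplication (using that conjugation by $h = \mathrm{diag}(-1,1,1,-1,1)$ simply negates those $\mu_{ij}$ for which exactly one of $i,j$ lies in $\{1,4\}$) that
\begin{equation*}
h\,\mu|_P\,h^{-1} = \begin{pmatrix} 0 & 0 & 0 & 0 & 0 \\ 0 & 0 & -\kappa & \eta_1 & \eta_2 \\ 0 & \kappa & 0 & -\eta_2 & \eta_1 \\ 0 & -\eta_1 & \eta_2 & 0 & \lambda \\ 0 & -\eta_2 & -\eta_1 & -\lambda & 0 \end{pmatrix}.
\end{equation*}
The $4\times 4$ lower-right block visibly commutes with the matrix of $e^{23} + e^{45}$, so $h\mu|_P\,h^{-1}$ is $\mathfrak{u}(2)$-valued.

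Cartan's Theorem \ref{thm:MaurerCartan} then supplies a map $F \colon P \to \mathrm{U}(2) \subset \SO(5)$, unique up to left-translation in $\mathrm{U}(2)$, satisfying $F^*\theta_{\mathrm{U}(2)} = h\mu|_P\,h^{-1}$. Composing $P \xrightarrow{F} \mathrm{U}(2) \hookrightarrow \SO(5) \xrightarrow{\pi} \Ber$ and conjugating by $h^{-1}$ identifies $N$ with an open subset of the $\mathrm{U}(2)$-orbit of $h^{-1}\cdot\Sigma_0$, as claimed.

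The main obstacle is purely computational: the choice of $h$ is not a priori obvious, and verifying the displayed form of $h\mu|_P\,h^{-1}$ requires careful bookkeeping of all sixteen entries of the $4 \times 4$ block. The correct $h$ is forced by the observation that before conjugation, the off-diagonal $2 \times 2$ blocks of $\mu|_P$ coupling $\mathrm{span}(e_2,e_3)$ to $\mathrm{span}(e_4,e_5)$ are symmetric rather than skew with respect to the pairing $(e_2,e_4) \leftrightarrow (e_3,e_5)$ intrinsic to $\mathfrak{u}(2) \subset \mathfrak{so}(4)$; the sign flip on $e_4$ built into $h$ is precisely what converts these symmetric blocks into the required skew form, while the sign flip on $e_1$ is harmless since $\omega_2 = \cdots = \omega_5 = 0$ on $P$.
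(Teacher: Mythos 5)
Your proposal is correct and follows essentially the same route as the paper: use the congruence preceding the statement to get $\gamma_2=\gamma_3=0$ on $P$, restrict the Maurer--Cartan form, conjugate by the diagonal sign matrix $h$ to land in $\mathfrak{u}(2)\subset\mathfrak{so}(5)$, and conclude by Cartan's Theorem \ref{thm:MaurerCartan}. Your displayed matrix for $h\,\mu|_P\,h^{-1}$ checks out against a direct substitution into (\ref{eq:MCSO5}) and (\ref{eq:MCSO5om}), and the only differences from the paper's proof are the (cosmetic) choice of basis for the surviving $1$-forms and your added heuristic for why this particular $h$ works.
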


\begin{proof}
Let $P$ be an integral manifold of $\left( \mathcal{I}_{167}, \Omega_{167} \right)$.  Define
\begin{align*}
\kappa & = 2\omega_1 - \gamma_1 & \chi_6 & = \textstyle \frac{2\sqrt{10}}{3}\omega_6 \\
\chi_1 & = \textstyle \frac{2}{3}\omega_1 - \gamma_1 & \chi_7 & = \textstyle \frac{2\sqrt{10}}{3}\omega_7
\end{align*}
Then the Maurer-Cartan form restricted to $P$ satisfies
\begin{equation*}
h^{-1} \, \mu|_P \, h = \frac{1}{2} \begin{small} \begin{bmatrix}
0 & 0 & 0 & 0 & 0 \\
0 & 0 & -\kappa - \chi_1 & \chi_6 & \chi_7 \\
0 & \kappa + \chi_1 & 0 & -\chi_7 & \chi_6 \\
0 & -\chi_6 & -\chi_7 & 0 & \kappa - \chi_1 \\
0 & \chi_7 & -\chi_6 & -\kappa + \chi_1 & 0
\end{bmatrix} \end{small}
\end{equation*}
As above, we conclude via Cartan's Theorem \ref{thm:MaurerCartan}.
\end{proof}

A calculation shows that the associative $3$-fold of Theorem \ref{thm:167Case} is totally geodesic and has Ricci curvature given by $\frac{2}{9}\!\left( \omega_1^2 + 19\omega_6^2 + 19\omega_7^2\right)$. Its topology is given by the quotient $\mathrm{U}(2) / \mathrm{O}(2)_{1,2},$ which is diffeomorphic to $S^3.$ This associative is ruled and corresponds under Theorem \ref{thm:RuledAssoc} to the lift to $\gra$ of a fibre of the twistor map $\mathbb{CP}^3 \to S^4$ via Proposition \ref{prop:JNK}.
 
 \subsection{Irreducibly acting stabiliser}
 
 In this section, we classify the associative submanifolds of $\Ber$ whose tangent space has $\SO(3)$-stabiliser isomorphic to a subgroup $\G$ acting irreducibly on $\R^3.$ By the results of \S\ref{ssect:assocstab} there are two subcases: $\G = \ico$ and $\G=\oct.$
 
 \subsubsection{Icosahedral case}
 
 Define 1-forms $\kappa_1, \kappa_2, \kappa_3, \beta_1, \ldots, \beta_4$ on $\SO(5)$ by
 \begin{align*}
 \kappa_1 & = \tfrac{1}{2} \omega_1 + \tfrac{\sqrt{3}}{2} \omega_5, & \beta_1 & = \omega_4, \\
 \kappa_2 & = - \tfrac{\sqrt{6} \left( 1 + \sqrt{5} \right)}{8} \omega_3 + \tfrac{\sqrt{2} \left( -3 + \sqrt{5} \right)}{8} \omega_7, & \beta_2 & = \tfrac{\sqrt{3}}{2} \omega_1 - \tfrac{1}{2} \omega_5, \\
 \kappa_3 & = \tfrac{\sqrt{6} \left( -1 + \sqrt{5} \right)}{8} \omega_2 - \tfrac{\sqrt{2} \left( 3 + \sqrt{5} \right)}{8} \omega_6, & \beta_3 & = \tfrac{\sqrt{2} \left( 3 +\sqrt{5} \right)}{8} \omega_2 + \tfrac{\sqrt{6} \left(-1 + \sqrt{5} \right)}{8} \omega_6, \\
  & & \beta_4 & = \tfrac{\sqrt{2} \left( -3 +\sqrt{5} \right)}{8} \omega_3 + \tfrac{\sqrt{6} \left(1 + \sqrt{5} \right)}{8} \omega_7.
 \end{align*}
 Define a differential ideal $\mathcal{I}_{\ico}$ with independence condition $\Omega_{\ico}$ on $\SO(5)$ by
 \begin{align}
 \mathcal{I}_{\ico} = \left\langle \beta_1, \beta_2, \beta_3, \beta_4 \right\rangle, \:\:\:\: \Omega_{\ico} = \kappa_1 \wedge \kappa_2 \wedge \kappa_3.
 \end{align}
 By construction, the integral manifolds of $\left( \mathcal{I}_{\ico}, \Omega_{\ico} \right)$ are in bijection with $\ico$-adapted coframe bundles of associative submanifolds of $\Ber$ whose Gauss map has image contained in $\mathcal{O}_{\ico}.$

Using the structure equations of $\SO(5),$ we may compute
\begin{align*}
d \left[ \begin{array}{c}
\beta_1 \\
\beta_2 \\
\beta_3 \\
\beta_4
\end{array}
\right] \equiv \sqrt{3} \left[ \begin{array}{ccc}
\gamma_1 & \gamma_2 & \gamma_3 \\
0 & \tfrac{-1 + \sqrt{5}}{2} \gamma_2 & \tfrac{-1-\sqrt{5}}{2} \gamma_3 \\
\tfrac{1+\sqrt{5}}{2} \gamma_2 & \tfrac{-1+\sqrt{5}}{2} \gamma_1 & 0 \\
\tfrac{1- \sqrt{5}}{2} \gamma_3 & 0 & \tfrac{1+\sqrt{5}}{2} \gamma_1
\end{array} \right] \wedge \left[ \begin{array}{c}
\kappa_1 \\
\kappa_2 \\
\kappa_3
\end{array} \right] \:\:\:\: \text{mod} \:\: {I_{\ico}}.
\end{align*}
It follows that on an integral manifold of $\mathcal{I}_{\ico}$ we have $\gamma_1 = \gamma_2 = \gamma_3 = 0.$

\begin{thm}\label{thm:IcoEg}
	Let $N^3 \to \Ber$ be an associative submanifold of the Berger space such that for all $p \in N,$ $T_p N$ has $\SO(3)$-stabiliser conjugate to $\ico \subset \SO(3).$ Then $N$ is $\SO(5)$-equivalent to (an open subset of) the orbit of the Veronese surface $h^{-1} \cdot \Sigma_0$ under the action of the group ${\SO(3)},$ where
	\begin{align*}
	h = \begin{small} \left[ \begin {array}{ccccc} -\tfrac{1}{4} & 0 & 0 & -\tfrac{\sqrt{15}}{4} & 0 \\
	0 & 1 & 0 & 0 & 0 \\
	0 & 0 & 1 & 0 & 0 \\
	\tfrac{\sqrt{15}}{4} & 0 & 0 & -\tfrac{1}{4} & 0 \\
	0 & 0 & 0 & 0 & 1 \end {array} \right] \end{small} \in \SO(5).
	\end{align*}
\end{thm}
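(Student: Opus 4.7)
The plan is to mimic the strategy used in Theorems \ref{thm:145Case} and \ref{thm:167Case}. By hypothesis, the $\ico$-adapted coframe bundle $P$ of $N$ inside $f^*\SO(5)$ is an integral manifold of the ideal $(\mathcal{I}_{\ico}, \Omega_{\ico})$, so on $P$ all four forms $\beta_1, \beta_2, \beta_3, \beta_4$ vanish. The computation immediately preceding the theorem shows that $d\beta_i \equiv 0 \mod \mathcal{I}_{\ico}$ forces $\gamma_1 = \gamma_2 = \gamma_3 = 0$ on $P$. Consequently, the only left-invariant forms on $\SO(5)$ that survive pullback to $P$ are $\kappa_1, \kappa_2, \kappa_3$, and these form a coframe by the independence condition $\Omega_{\ico}$.

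The first substantive step is to invert the linear definitions of $\kappa_i$ and $\beta_j$ to express each $\omega_k$ in terms of $\kappa_1, \kappa_2, \kappa_3$ on $P$. The system decouples into three invertible $2 \times 2$ blocks, pairing $(\omega_1, \omega_5)$ with $(\kappa_1, \beta_2)$, $(\omega_3, \omega_7)$ with $(\kappa_2, \beta_4)$, and $(\omega_2, \omega_6)$ with $(\kappa_3, \beta_3)$, together with the singleton relation $\omega_4 = \beta_1$. Setting all $\beta_i = 0$ makes $\omega_1, \omega_5$ proportional to $\kappa_1$; $\omega_3, \omega_7$ proportional to $\kappa_2$; $\omega_2, \omega_6$ proportional to $\kappa_3$; and $\omega_4 = 0$. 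Substituting into the $\mathfrak{so}(5)$-valued Maurer-Cartan form $\mu = \gamma + \omega$ given by (\ref{eq:MCSO5}) and (\ref{eq:MCSO5om}) yields $\mu|_P$ as a $5 \times 5$ skew-symmetric matrix whose entries are linear combinations of $\kappa_1, \kappa_2, \kappa_3$ alone.

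Next, I would conjugate by the prescribed $h$ and verify that the resulting matrix-valued form lies in an $\mathfrak{so}(3)$-subalgebra of $\mathfrak{so}(5)$ embedded standardly, with $\kappa_1, \kappa_2, \kappa_3$ serving as the three independent components of an $\mathfrak{so}(3)$ Maurer-Cartan form. Since the restriction of the $\SO(5)$ Maurer-Cartan equation to $P$ automatically implies the Maurer-Cartan equation for this $\mathfrak{so}(3)$-valued 1-form, Cartan's Theorem \ref{thm:MaurerCartan} produces a map $P \to \SO(3) \subset \SO(5)$ unique up to left translation. Its composition with the projection to $\Ber$ identifies $N$ with an open subset of the $\SO(3)$-orbit of $h^{-1} \cdot \Sigma_0$.

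The principal obstacle is the explicit verification that the particular $h$ listed in the statement is indeed the correct conjugating element: one must check that after applying it, the block structure of the conjugated matrix realises the standard embedding of $\mathfrak{so}(3)$ whose corresponding $\SO(3)$-orbit in $\Ber$ passes through $h^{-1} \cdot \Sigma_0$. This is a finite linear-algebraic computation using only equations (\ref{eq:MCSO5}), (\ref{eq:MCSO5om}), the definitions of $\kappa_i$ and $\beta_j$, and the vanishing of the $\beta_j$ and $\gamma_i$ on $P$; no further differential information is required, and once it is in place the appeal to Cartan's theorem is routine.
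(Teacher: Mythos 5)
Your proposal follows the paper's proof essentially verbatim: vanishing of the $\beta_j$ and $\gamma_i$ on the integral manifold $P$, expression of $\mu|_P$ as an explicit matrix in $\kappa_1,\kappa_2,\kappa_3$ (your observation that the linear system splits into the three invertible $2\times 2$ blocks plus $\omega_4=\beta_1$ is correct), conjugation by $h$ into the standard copy of $\mathfrak{so}(3)\subset\mathfrak{so}(5)$, and an appeal to Cartan's Theorem \ref{thm:MaurerCartan}. The paper additionally records the intermediate structure equations $d\kappa_1=-\tfrac{2}{3}\kappa_2\wedge\kappa_3$ (and cyclic permutations), but this is equivalent to your remark that the $\mathfrak{so}(3)$ Maurer--Cartan equation is inherited automatically, so the two arguments coincide.
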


\begin{proof}
	Let $P$ be an integral manifold of $\left( \mathcal{I}_{\ico}, \Omega_{\ico} \right)$ projecting to $N.$ By the work above, we have $\beta_1 = \cdots = \beta_4 = \gamma_1 = \gamma_2 = \gamma_3 = 0$ on $P.$ Thus, the Maurer-Cartan form $\mu$ of $\SO(5)$ restricts to $P$ to read
	\begin{align*}
	\mu|_P = \begin{small} \left[ \begin{array}{ccccc}
	0 & -\tfrac{\sqrt{3} \left( -1 + \sqrt{5} \right)}{6} \kappa_3 & -\tfrac{\sqrt{3} \left( 1 + \sqrt{5} \right)}{6} \kappa_2 & 0 & \tfrac{\sqrt{15}}{3} \kappa_1 \\
	\tfrac{\sqrt{3} \left( -1 + \sqrt{5} \right)}{6} \kappa_3 & 0 & -\tfrac{2}{3} \kappa_1 & \tfrac{1+3\sqrt{5}}{2} \kappa_3 & \tfrac{2}{3} \kappa_2 \\
	\tfrac{\sqrt{3} \left( 1 + \sqrt{5} \right)}{6} \kappa_2 & \tfrac{2}{3} \kappa_1 & 0 & \tfrac{1-3\sqrt{5}}{2} \kappa_2 & -\tfrac{2}{3} \kappa_3 \\
	0 & -\tfrac{1+3\sqrt{5}}{2} \kappa_3 & \tfrac{-1+3\sqrt{5}}{2} \kappa_2 & 0 & \tfrac{1}{3} \kappa_1 \\
	-\tfrac{\sqrt{15}}{3} \kappa_1 & -\tfrac{2}{3} \kappa_2 & \tfrac{2}{3} \kappa_3 & -\tfrac{1}{3} \kappa_1 & 0
	\end{array}
	\right], \end{small}
	\end{align*}
	and the structure equations of $\SO(5)$ imply that, on $P,$
	\begin{align*}
	d \kappa_1 & = - \tfrac{2}{3} \kappa_2 \wedge \kappa_3, & d \kappa_2 & = - \tfrac{2}{3} \kappa_3 \wedge \kappa_1, & d \kappa_3 & = - \tfrac{2}{3} \kappa_1 \wedge \kappa_2.
	\end{align*}
	It follows that $N$ is (an open subset of) a homogeneous submanifold of $\Ber.$ To determine this submanifold explicitly, note that, for $h$ defined as above, we have
	\begin{align}
	h^{-1} \mu|_N h = \begin{small} \frac{2}{3} \left[ \begin {array}{ccccc} 
	 0 & -\sqrt {3} \kappa_{{3}} & \sqrt {3}\kappa_{{2}} & 0 & 0 \\
	 \sqrt {3} \kappa_{{3}} & 0 & -\kappa_{{1}} & -\kappa_{{3}} & \kappa_{{2}} \\
	 -\sqrt {3}\kappa_{{2}} & \kappa_{{1}} & 0 & -\kappa_{{2}} & -\kappa_{{3}} \\
	 0 & \kappa_{{3}} & \kappa_{{2}} & 0 & -2\kappa_{{1}} \\
	 0 & -\kappa_{{2}} & \kappa_{{3}} & 2\kappa_{{1}} & 0 
	 \end {array} \right], \end{small}
	\end{align}
	and the right-hand-side is exactly our standard presentation (\ref{eq:MCSO5}) of the Lie algebra $\mathfrak{so}(3) \subset \mathfrak{so}(5)$.  The result now follows from an application of Cartan's Theorem \ref{thm:MaurerCartan}.
\end{proof}

In order to determine the topology of the manifold described in Theorem \ref{thm:IcoEg}, it is necessary to determine the intersection of the groups $\SO(3)$ and $h \SO(3) h^{-1},$ where the element $h$ is as defined in the statement of the theorem. The group $\SO(3)$ preserves the standard Veronese surface $\Sigma_0$ in $S^4$ defined by the equation 
\begin{equation}
v_1 \left( v_1^2 + \tfrac{3}{2} v_2^2 + \tfrac{3}{2} v_3^2 - 3 v_4^2 - 3 v_5^2 \right) + \tfrac{3\sqrt{3}}{2} v_4 \left( v_2^2 - v_3^2 \right) + 3 \sqrt{3} v_2 v_3 v_5 = 1,
\end{equation}
while the group $h \SO(3) h^{-1}$ preserves the Veronese surface $h \cdot \Sigma_0 \subset S^4$ defined by the equation
\begin{equation}
\begin{aligned}
\tfrac{1}{16} v_1 \left( 11 v_1^2 + 9\sqrt{15} v_1 v_4 + \left( -6 + 18 \sqrt{5} \right) v_2^2 + \left( -6 - 18 \sqrt{5} \right) v_3^2 - 33 v_4^2 + 12 v_5^2 \right) \\
+ \tfrac{3\sqrt{3}}{4} v_4 \left( \tfrac{-1-\sqrt{5}}{2} v_2^2 + \tfrac{1-\sqrt{5}}{2} v_3^2 -\tfrac{1}{4}\sqrt{5} v_4^2 + \sqrt{5} v_5^2 \right) + 3 \sqrt{3} v_2 v_3 v_5 = 1.
\end{aligned}
\end{equation}
Note that the standard Veronese surface $\Sigma_0$ may be described as the image of the $\SO(3)$-equivariant map $\nu: S^2 \to S^4$ given by
\begin{equation}
\nu: \left( x, y, z\right) \mapsto \left( x^2 - \tfrac{1}{2} y^2 -\tfrac{1}{2} z^2, \sqrt{3} xy, \sqrt{3} xz, \tfrac{\sqrt{3}}{2} y^2- \tfrac{\sqrt{3}}{2} z^2, \sqrt{3} y z \right).
\end{equation}
The map $\nu$ is a double cover of $\Sigma_0.$

Any element of the intersection $\SO(3) \cap h \SO(3) h^{-1}$ must preserve the intersection $\Sigma_0 \cap h \cdot \Sigma_0.$ Computation yields that $\Sigma_0 \cap h \cdot \Sigma_0$ is given by the $\nu$-image of the 20 points
\begin{equation*}
(\pm \tfrac{1}{\sqrt{3}},\pm\tfrac{1}{\sqrt{3}},\pm\tfrac{1}{\sqrt{3}}), \:\:  ( 0, \pm \tau, \pm \tfrac{1}{\tau}), \:\: (\pm \tfrac{1}{\tau},0, \pm \tau), \:\: ( \pm \tau, \pm \tfrac{1}{\tau}, 0),
\end{equation*}
where, as in \S\ref{sssect:IcoStab}, we have $\tau = \tfrac{1+\sqrt{5}}{2},$ the golden ratio. These points form the vertices of a regular dodecahedron in $S^2.$ Thus, the intersection $\SO(3) \cap h \SO(3) h^{-1}$ is a subgroup of the group of symmetries of this dodecahedron. This group is generated by the matrices
\begin{small}
	\begin{align*}
	\left[ \begin{array}{ccc}
	-1 & 0 & 0 \\
	0 & -1 & 0 \\
	0 & 0 & 1
	\end{array} \right] , \left[ \begin{array}{ccc}
	0 & 0 & 1 \\
	1 & 0 & 0 \\
	0 & 1 & 0
	\end{array} \right] , \frac{1}{2} \left[ \begin{array}{ccc}
	1 & -\tfrac{1}{\tau} & -{\tau} \\
	-\tfrac{1}{\tau} & \tau & -1 \\
	{\tau} & 1 & -\tfrac{1}{\tau}
	\end{array} \right].
	\end{align*}
\end{small}
In fact, it is possible to verify by explicit calculation (using, for example, Euler angles) that $\SO(3) \cap h \SO(3) h^{-1}$ is equal to the symmetry group of the dodecahedron. Thus, the associative group orbit described in Theorem \ref{thm:IcoEg} is diffeomorphic to the Poincar\'e homology sphere $\SO(3)/\ico.$

The induced metric on the associative of Theorem \ref{thm:IcoEg} has constant curvature.

\subsubsection{Octahedral case}

Define 1-forms $\kappa_1, \kappa_2, \kappa_3, \beta_1, \chi_1, \chi_2, \chi_3$ on $\SO(5)$ by
\begin{align*}
\kappa_1 & = \omega_1, & \chi_1 & = \omega_5 , \\
\kappa_2 & = -\tfrac{\sqrt{6}}{4} \omega_3 + \tfrac{\sqrt{10}}{4} \omega_7, & \chi_2 & =  -\tfrac{\sqrt{10}}{4} \omega_3 - \tfrac{\sqrt{6}}{4} \omega_7,  \\
\kappa_3 & = -\tfrac{\sqrt{6}}{4} \omega_2 - \tfrac{\sqrt{10}}{4} \omega_6, & \chi_3 & = \tfrac{\sqrt{10}}{4} \omega_2 - \tfrac{\sqrt{6}}{4} \omega_6, \\
\beta_1 & = \omega_4. & &
\end{align*}
Define a differential ideal $\mathcal{I}_{\oct}$ with independence condition $\Omega_{\oct}$ on $\SO(5)$ by
\begin{align*}
\mathcal{I}_{\oct} = \left\langle \beta_1, \chi_1, \chi_2, \chi_3 \right\rangle, \:\:\: \Omega_{\oct} = \kappa_1 \wedge \kappa_2 \wedge \kappa_3.
\end{align*}
By construction, the integral manifolds of $\left( \mathcal{I}_{\oct}, \Omega_{\oct} \right)$ are in bijection with $\oct$-adapted coframe bundles of associative submanifolds of $\Ber$ whose Gauss maps have image contained in $\mathcal{O}_{\oct}.$

\begin{thm}\label{thm:OctEg}
	Let $N^3 \to \Ber$ be an associative submanifold of the Berger space such that for all $p \in N,$ $T_p N$ has $\SO(3)$-stabiliser conjugate to $\oct \subset \SO(3).$ Then either:
	\begin{enumerate}
		\item $N$ is $\SO(5)$-equivalent to (an open subset of) the orbit of the Veronese surface $h^{-1} \cdot \Sigma_0$ under the action of the group ${\SO(3)},$ where
			\begin{align*}
			h = \begin{small} \left[ \begin {array}{ccccc} -1 & 0 & 0 & 0 & 0 \\
			0 & 1 & 0 & 0 & 0 \\
			0 & 0 & 1 & 0 & 0 \\
			0 & 0 & 0 & -1 & 0 \\
			0 & 0 & 0 & 0 & 1 \end {array} \right] \end{small} \in \SO(5).
			\end{align*}
		\item $N$ is $\SO(5)$-equivalent to (an open subset of) the orbit of the standard Veronese-Bor\r{u}vka surface $k^{-1} \cdot \Sigma_0$ under the action of the group ${\SO(3)}_{\textup{std}},$ where
		\begin{align*}
		k =  \begin{small} \left[ \begin {array}{ccccc} -1&0&0&0&0\\ 0&0&0&0&1
		\\ 0&0&0&1&0\\ 0&1&0&0&0
		\\ 0&0&1&0&0\end {array} \right]
		 \end{small} \in \SO(5).
		\end{align*}	
	\end{enumerate}
\end{thm}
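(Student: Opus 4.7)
The plan is to follow the pattern established earlier in this section for the orbits $\mathcal{O}_{123}$, $\mathcal{O}_{145}$, $\mathcal{O}_{167}$ and $\mathcal{O}_\ico$: I analyze the differential ideal $(\mathcal{I}_\oct, \Omega_\oct)$ on $\SO(5)$, show that its integral manifolds fall into a finite number of families, and then apply Cartan's Theorem \ref{thm:MaurerCartan} in each family to identify the resulting associative with an explicit homogeneous orbit. Let $P$ be an integral manifold projecting to $N$.

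First I would compute $d\beta_1$ and $d\chi_1, d\chi_2, d\chi_3$ modulo $\mathcal{I}_\oct$ using the structure equations (\ref{eq:BerStruct1}). Each of these reduces to a linear combination of terms $\gamma_i \wedge \kappa_j$ plus elements of $\mathcal{I}_\oct$, and the vanishing of these expressions on $P$, together with Cartan's Lemma, forces $\gamma_1, \gamma_2, \gamma_3$ to be semi-basic. In the icosahedral case the analogous system is rigid enough to force $\gamma_i = 0$ outright; here, because the $\oct$-representation on the normal space $\mathbb{R} e_4 \oplus W$ is reducible, I expect the system to leave a one-parameter freedom, of the form $\gamma_i = s\, \kappa_i$ (the shape being dictated by $\oct$-equivariance). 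Differentiating these relations using (\ref{eq:BerStruct1})--(\ref{eq:BerStruct2}) and imposing closure will then yield a purely algebraic equation for $s$, which I expect to have exactly two solutions, corresponding to the two associatives in the statement.

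For each value of $s$, the restriction $\mu|_P$ of the Maurer-Cartan form of $\SO(5)$ to $P$ takes values in a $3$-dimensional Lie subalgebra of $\mathfrak{so}(5)$. For the first value, after conjugation by $h^{-1}$ the restricted form should agree with the standard presentation (\ref{eq:MCSO5}) of the irreducibly embedded $\mathfrak{so}(3)\subset\mathfrak{so}(5)$, exactly as in Theorem \ref{thm:IcoEg}; Cartan's Theorem \ref{thm:MaurerCartan} then identifies $N$ with (an open subset of) the orbit of $h^{-1}\cdot\Sigma_0$ under the irreducibly embedded $\SO(3)$. For the second value, after conjugation by $k$ the restricted form should instead take values in the block subalgebra $\mathfrak{so}(3)_{\mathrm{std}}\subset\mathfrak{so}(5)$ acting on $\mathrm{span}(e_3, e_4, e_5)$, producing via the same theorem the orbit of $k^{-1}\cdot\Sigma_0$ under $\SO(3)_{\mathrm{std}}$.

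The main obstacle is the branching in the second step: one must verify that the integrability conditions cut the space of admissible values of $s$ down to exactly two points, and then guess the correct conjugating matrices $h$ and $k$ so that $\mu|_P$ matches the two inequivalent standard embeddings $\mathfrak{so}(3)\hookrightarrow\mathfrak{so}(5)$. The existence of the second, block-diagonal solution is conceptually explained by the observation that $A_\oct$ also appears as a $3$-dimensional $\oct$-subrepresentation of the standard $\SO(3)_{\mathrm{std}}$-module $\mathbb{R}^5$ (via the irreducible $\oct$-action on $\mathrm{span}(e_3,e_4,e_5)$), so one predicts on representation-theoretic grounds a second homogeneous associative coming from this action; confirming this via the EDS computation is the core technical task.
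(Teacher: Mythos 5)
Your proposal follows essentially the same route as the paper: the structure equations applied to $(\mathcal{I}_{\oct},\Omega_{\oct})$ force $\gamma_i = a\,\kappa_i$ on an integral manifold, the equations (\ref{eq:BerStruct2}) then give the quadratic $6a^2+a-1=0$ with the two roots $a=-1/2$ and $a=1/3$, and conjugating $\mu|_P$ by $h$ (respectively $k$) puts it into the irreducible (respectively block-diagonal) presentation of $\mathfrak{so}(3)\subset\mathfrak{so}(5)$ so that Cartan's Theorem \ref{thm:MaurerCartan} applies. Your representation-theoretic prediction of the second, $\SO(3)_{\mathrm{std}}$-homogeneous solution is exactly what the computation confirms, so the plan is correct and matches the paper's argument.
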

\begin{proof}
	The structure equations of $\SO(5)$ imply
	\begin{align*}
	d \left[ \begin{array}{c}
	\beta_1 \\
	\chi_1 \\
	\chi_2 \\
	\chi_3
	\end{array}
	\right]  \equiv  \frac{\sqrt{15}}{2} \left[ \begin{array}{ccc}
	0 & 0 & 0 \\
	0 & \gamma_3 & \gamma_2 \\
	\gamma_3 & 0 & \gamma_1 \\
	\gamma_2 & \gamma_1 & 0
	\end{array} \right] \wedge \left[ \begin{array}{c}
	\kappa_1 \\
	\kappa_2 \\
	\kappa_3
	\end{array} \right] \:\:\:\: \text{mod} \:\: {I_{\oct}}.
	\end{align*}
	It follows that on an an integral manifold $P$ of $\mathcal{I}_{\oct}$ we must have $\gamma_i = a \kappa_i, \: i = 1, 2, 3,$ for some function $a$ on $P.$ The structure equations (\ref{eq:BerStruct2}) restricted to $P$ then imply $6a^2+a-1 = 0,$ so that either $a = -1/2$ or $a=1/3.$
	
	Suppose first that $P$ is an integral manifold of $\mathcal{I}_{\oct}$ with $a = -1/2$ on $P.$ The Maurer-Cartan form $\mu$ of $\SO(5)$ restricted to $P$ satisfies
	\begin{small}
			\begin{align*}
		h^{-1} \mu |_{P} h = \frac{5}{6} \left[ \begin {array}{ccccc} 
		0 & -\sqrt {3} \kappa_{{3}} & \sqrt {3}\kappa_{{2}} & 0 & 0 \\
		\sqrt {3} \kappa_{{3}} & 0 & -\kappa_{{1}} & -\kappa_{{3}} & \kappa_{{2}} \\
		-\sqrt {3}\kappa_{{2}} & \kappa_{{1}} & 0 & -\kappa_{{2}} & -\kappa_{{3}} \\
		0 & \kappa_{{3}} & \kappa_{{2}} & 0 & -2\kappa_{{1}} \\
		0 & -\kappa_{{2}} & \kappa_{{3}} & 2\kappa_{{1}} & 0 
		\end {array} \right],
		\end{align*}
	\end{small}
	where $h \in \SO(5)$ is the element defined in the statement of the theorem.  It follows from Cartan's Theorem \ref{thm:MaurerCartan} that $N = \pi \left( P \right)$ is $\SO(5)$-equivalent to (an open subset) of the homogeneous submanifold described in part 1 of the theorem.
	
	Suppose now that $P$ is an integral manifold of $\mathcal{I}_{\oct}$ with $a = 1/3$ on $P.$ The Maurer-Cartan form $\mu$ of $\SO(5)$ restricted to $P$ satisfies
	\begin{align*}
	k^{-1} \mu |_{P} k = \frac{5}{3} \begin{small} \left[ \begin {array}{ccccc} 0&0&0&0&0\\ 0&0&0&0&0
	\\ 0&0&0&\kappa_{{3}}&-\kappa_{{2}}
	\\ 0&0&-\kappa_{{3}}&0&\kappa_{{1}}
	\\ 0&0&\kappa_{{2}}&-\kappa_{{1}}&0\end {array}
	\right], \end{small}
	\end{align*}
	where $k \in \SO(5)$ is the element defined in the statement of the theorem.  As above, it follows that $N = \pi \left( P \right)$ is $\SO(5)$-equivalent to (an open subset) of the homogeneous submanifold described in part 2 of the theorem.
\end{proof}

One can check that the intersection $\SO(3) \cap h \SO(3) h^{-1}$ is equal to the group $\oct.$ It follows that the topology of the first example is $\SO(3)/\oct.$ The intersection $\SO(3) \cap k \SO(3)_{\textup{std}} k^{-1}$ is equal to the dihedral group $\textrm{D}_2,$ and so the topology of the second example is $\SO(3)/\mathrm{D}_2.$ Both examples have constant curvature.

\subsection{Cyclic and dihedral stabiliser}

In this section we classify the associative submanifolds of $\Ber$ whose tangent space has $\SO(3)$-stabiliser isomorphic to a cyclic subgroup or dihedral subgroup $\G$ having an element of order greater than 2. By the results of \S\ref{ssect:assocstab}, there are three subcases to consider: $\G = \mathbb{Z}_5, \G = \mathbb{Z}_4,$ and $\G = \mathbb{Z}_3.$


\subsubsection{The case $\G = \mathbb{Z}_5$}

Let $f: N \to \Ber$ be an associative 3-fold whose tangent space has $\SO(3)$-stabilier everywhere equal to $\mathbb{Z}_5.$ By Proposition \ref{prop:CycStab}, we may adapt frames on $N$ so that $\omega_2 = \omega_3 = 0$ and 
\begin{equation}\label{eq:Z5vanish}
- \sin \theta \, \omega_4 + \cos \theta \, \omega_7 = 0, \:\:\: - \sin \theta \, \omega_5 + \cos \theta \, \omega_6 = 0,
\end{equation}
for some function $\theta.$ Let $\mathcal{P}_5 (N)$ denote the $\mathbb{Z}_5$-subbundle of $f^* \SO(5)$ corresponding to this frame adaptation. The function $\theta$ is well-defined on $\mathcal{P}_5(N).$ Define 1-forms $\kappa_1, \kappa_2, \kappa_3$ on $\mathcal{P}_5(N)$ by
\begin{equation}\label{eq:Z5kapdefn}
\begin{aligned}
\kappa_1 &= \omega_1, \\
\kappa_2 &= \cos \theta \, \omega_4 + \sin \theta \, \omega_7, \\
\kappa_3 &= \cos \theta \, \omega_5 + \sin \theta \, \omega_6.
\end{aligned}
\end{equation}
These forms are a basis for the semibasic forms on $\mathcal{P}_5(N),$ and the induced metric on $N$ pulls back to $\mathcal{P}_5(N)$ as
\begin{equation*}
g_N = \kappa_1^2 + \kappa_2^2 + \kappa_3^2.
\end{equation*}

On $\mathcal{P}_5(N),$ the exterior derivatives of the equations $\omega_2 = \omega_3 = 0$ and equations (\ref{eq:Z5vanish}) imply that there exist functions $t_2, t_3$ on $\mathcal{P}_{5}(N)$ such that
\begin{equation}\label{eq:Z5conseq}
\begin{aligned}
d \theta &= t_2 \kappa_2 + t_3 \kappa_3, & \gamma_2 &= 0, \\
\gamma_1 &= \tfrac{1}{5 \sin \theta \cos \theta} \left(-t_3 \kappa_2 + t_2 \kappa_3 \right), & \gamma_3 &= 0. \\
\end{aligned}
\end{equation}
Substituting these equations in to the structure equation (\ref{eq:BerStruct1}) yields the equations
\begin{equation}\label{eq:Z5AssocStruct}
\begin{aligned}
d \kappa_1 &= - \tfrac{2}{3} \kappa_2 \wedge \kappa_3, \\
d \kappa_2 &= -\tfrac{2}{3} \kappa_3 \wedge \kappa_1 + \tfrac{5 \cos^2 \theta - 3}{5 \sin \theta \cos \theta} t_3 \kappa_2 \wedge \kappa_3, \\
d \kappa_3 &= -\tfrac{2}{3} \kappa_1 \wedge \kappa_2 - \tfrac{5 \cos^2 \theta - 3}{5 \sin \theta \cos \theta} t_2 \kappa_2 \wedge \kappa_3.
\end{aligned}
\end{equation}

The structure equations (\ref{eq:Z5AssocStruct}) imply that the differential system $\mathcal{R} = \langle \kappa_2, \kappa_3 \rangle$ on $\mathcal{P}_5(N)$ is a Frobenius system. Thus, the 3-manifold $\mathcal{P}_5(N)$ is foliated by the integral curves of $\mathcal{R}.$ In fact, from (\ref{eq:Z5kapdefn}) and the definition of $\mathcal{P}_5(N),$ the integral curves of $\mathcal{R}$ project to $N$ to be $\mathcal{C}$-curves in $\Ber.$ Thus, $N$ is foliated by $\mathcal{C}$-curves, i.e. it is a ruled associative in the sense of Definition \ref{defn:ruled}.

\begin{prop}
	Let $N \to \Ber$ be an associative submanifold of the Berger space such that the tangent space of $N$ has $\SO(3)$-stabiliser everywhere equal to $\mathbb{Z}_5.$ Then $N$ is a ruled associative submanifold which is locally the $\pi \circ \lambda^{-1}$ image of a $J$-holomorphic curve in $\gra$ satisfying $\zeta_1 = \zeta_4 = 0.$ Conversely, any $J$-holomorphic curve in $\gra$ satisfying $\zeta_1 = \zeta_4 = 0$ gives rise to such an associative submanifold via Theorem \ref{thm:RuledAssoc}. 
\end{prop}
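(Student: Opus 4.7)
The plan is to package the analysis already carried out in the paragraphs preceding the statement into the forward implication, and then reverse-engineer it for the converse. The main technical tool in both directions is the matrix identity (\ref{eq:omegazeta}) relating the real forms $\omega_i, \gamma_j$ on $\SO(5)$ to the $\lambda$-semibasic complex forms $\zeta_1, \ldots, \zeta_4$.

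\textbf{Forward direction.} The frame reduction to $\mathcal{P}_5(N)$ enforces $\omega_2 = \omega_3 = 0$ by construction, while the consequences (\ref{eq:Z5conseq}) extracted from $d\omega_2, d\omega_3$ and the differentials of (\ref{eq:Z5vanish}) further yield $\gamma_2 = \gamma_3 = 0$. The Pfaffian system $\langle \kappa_2, \kappa_3\rangle$ on $\mathcal{P}_5(N)$ is Frobenius by (\ref{eq:Z5AssocStruct}) and its integral curves project to $\mathcal{C}$-curves in $N$, so $N$ is $\mathcal{C}$-ruled and Theorem \ref{thm:RuledAssoc}(1) produces a $J$-holomorphic curve $S \subset \gra$. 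To identify $S$, I would substitute $\omega_2 = \omega_3 = \gamma_2 = \gamma_3 = 0$ into (\ref{eq:omegazeta}); this yields the $2\times 2$ complex linear system $-2\zeta_1 + \sqrt{6}\,\zeta_4 = 0$ and $\sqrt{6}\,\zeta_1 + 2\zeta_4 = 0$, whose coefficient determinant is $-10 \neq 0$, forcing $\zeta_1 = \zeta_4 = 0$ on $\mathcal{P}_5(N)$. Since $\zeta_1, \zeta_4$ are $\lambda$-semibasic, this descends to the corresponding relations on $S$.

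\textbf{Converse.} Given a $J$-holomorphic curve $S$ with $\zeta_1 = \zeta_4 = 0$, the scalar functions of the proof of Theorem \ref{thm:RuledAssoc}(2) satisfy $W_1 = W_4 = 0$ on $\mathcal{G}(S)$. The normalization (\ref{eq:Wsqeq}) then gives $|W_2|^2 + |W_3|^2 = 1$, so $W_2$ and $W_3$ never simultaneously vanish. This is exactly the condition that $2W_1 - \sqrt{6}W_4, W_2, W_3$ are not all zero, so $S^\circ = S$ and $N := (\pi \circ \lambda^{-1})(S)$ is a $\mathcal{C}$-ruled associative in its entirety. To identify the stabiliser, I would pull (\ref{eq:omegazeta}) back in the other direction: $\zeta_1 = \zeta_4 = 0$ forces $\omega_2 = \omega_3 = 0$ and $\gamma_2 = \gamma_3 = 0$ on $\mathcal{G}(S)$, so $T_pN \subset \mathrm{span}(e_1, e_4, e_5, e_6, e_7)$. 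After absorbing the residual circle gauge freedom in $\sigma$ (and the connection directions $\rho_1, \rho_2$) to normalise the phases of $W_2, W_3$, the tangent plane takes exactly the form $Q^{(5)}(\theta)$ of Proposition \ref{prop:CycStab}(1), with $\theta$ determined by the modulus ratio $|W_2|/|W_3|$; in this gauge the frame bundle of $N$ is identified with $\mathcal{P}_5(N)$.

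\textbf{Main obstacle.} The delicate bookkeeping step is confirming that the stabiliser of $T_pN$ is genuinely $\mathbb{Z}_5$ and does not enlarge to $\mathrm{O}(2)$ at the exceptional angles $\theta \in \{\arccos\sqrt{3/5}, \pi/2\}$ excluded in Proposition \ref{prop:CycStab}(1). For the forward implication the pointwise hypothesis rules these configurations out a priori. For the converse statement one must accept that the stabiliser enlarges on the real-analytic locus $\{|W_2|^2 = 2|W_3|^2/3\} \cup \{|W_2| = |W_3|\}$ (or similar), but the precise correspondence claimed holds generically on $S$. Once this step is framed correctly, everything else reduces to organising the already-established frame computations through the linear identity (\ref{eq:omegazeta}).
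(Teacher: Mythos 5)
Your proposal is correct and follows essentially the same route as the paper: the forward direction combines the ruling argument with the substitution of $\omega_2 = \omega_3 = \gamma_2 = \gamma_3 = 0$ into (\ref{eq:omegazeta}) (an invertible $2\times 2$ system forcing $\zeta_1 = \zeta_4 = 0$), and the converse uses $|W_2|^2 + |W_3|^2 = 1$ with a frame adaptation normalising $W_2, W_3$ so that the tangent planes take the form $Q^{(5)}(\theta)$. Your remark about the stabiliser possibly enlarging at the exceptional values of $\theta$ is a genuine subtlety that the paper's own (very terse) proof passes over silently, so flagging it is a refinement rather than a defect.
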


\begin{proof}
	The fact that $N$ is ruled has been demonstrated above. Theorem \ref{thm:RuledAssoc} guarantees that $N$ is locally the $\pi \circ \lambda^{-1}$ image of a $J$-holomorphic curve in $\gra,$ and the conditions $\zeta_1 = \zeta_4$ follow from the frame adaptation $\omega_2 = \omega_3 = 0$ and the equations $\gamma_2 = \gamma_3 = 0$ (\ref{eq:Z5conseq}), together with equation (\ref{eq:omegazeta}).
	
	To prove the converse, note that on a $J$-holomorphic curve $S$ with $\zeta_1 = \zeta_4 = 0$ we have $\left\lvert W_2 \right\rvert^2 + \left\lvert W_3 \right\rvert^2 = 1,$ and we may adapt frames so that
	\begin{equation}
	W_2 = \cos \theta, \:\:\: W_3 = -i \sin \theta.
	\end{equation}
	It follows that the tangent spaces of the corresponding ruled associative have $\SO(3)$-stabiliser everywhere isomorphic to $\mathbb{Z}_5.$
\end{proof}

\begin{thm}
	Let $S$ be a $J$-holomorphic curve in $\gra$ with $\zeta_1 = \zeta_4 = 0.$ Then $S$ is locally the normal lift of a superminimal surface in $S^4.$ Conversely, the normal lift of a superminimal surface in $S^4$ gives a $J$-holomorphic curve in $\gra$ satisfying $\zeta_1 = \zeta_4 = 0.$
\end{thm}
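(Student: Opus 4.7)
The plan is to characterise the conditions $\zeta_1 = \zeta_4 = 0$ on a $J$-holomorphic curve $S \subset \gra$ directly in terms of the second fundamental form of the surface in $S^4$ swept out by the basepoint projection. First I would observe that on the adapted frame bundle of $S$, the condition $\zeta_1 = 0$ translates to $\mu_{12} = \mu_{13} = 0$, so that the basepoint $\mathbf{e}_1$ moves only along $\mathrm{span}(\mathbf{e}_4, \mathbf{e}_5)$. Provided the projection $S \to S^4$ is an immersion (the generic case, since otherwise $S$ lies in a fibre of $\gra \to S^4$), its image is a surface $\Sigma$ with $T\Sigma = \mathrm{span}(\mathbf{e}_4, \mathbf{e}_5)$ and $N\Sigma = \mathrm{span}(\mathbf{e}_2, \mathbf{e}_3)$, so $S$ is locally the normal lift of $\Sigma$.

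Next I would expand the second fundamental form in the tangent coframe $\xi_j = \mu_{1j}$, $j = 4, 5$: using the symmetry $\mu_{\alpha j}(\mathbf{e}_k) = \mu_{\alpha k}(\mathbf{e}_j)$ for $\alpha \in \{2, 3\}$, write
\begin{align*}
\mu_{24} &= a_1 \xi_1 + a_2 \xi_2, & \mu_{25} &= a_2 \xi_1 + a_4 \xi_2, \\
\mu_{34} &= b_1 \xi_1 + b_2 \xi_2, & \mu_{35} &= b_2 \xi_1 + b_4 \xi_2.
\end{align*}
Substituting into the definitions of $\zeta_3$ and $\zeta_4$ and collecting coefficients of $\sigma = \xi_1 - i\xi_2$ and $\bar{\sigma} = \xi_1 + i\xi_2$, I expect the condition that $\zeta_3$ be of type $(1,0)$ to reduce to $a_1 + a_4 = 0 = b_1 + b_4$, i.e.\ minimality of $\Sigma$, and the further condition $\zeta_4 = 0$ to reduce to $a_2 = b_1$ and $b_2 = -a_1$. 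Interpreted invariantly, this last pair of identities says that the $(2,0)$-part of $II$, regarded as a section of the complexified normal bundle $N^{\mathbb{C}}\Sigma$, lies in the $(0,1)$-subbundle --- the classical definition of $\Sigma$ being superminimal (of negative spin).

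For the converse, I would run this calculation in reverse: given a superminimal $\Sigma$, orient $N\Sigma$ so that $II^{(2,0)}$ is of type $(0,1)$ in $N^{\mathbb{C}}\Sigma$ (reversing the normal orientation toggles the spin, so this orientation exists), construct the normal lift $\hat{\Sigma} \to \gra$ using this orientation, and read off that $\zeta_1 = 0$ tautologically from the normal lift, $\zeta_4 = 0$ from the SFF identities, and $\zeta_2, \zeta_3$ are of type $(1,0)$ from minimality; hence $\hat{\Sigma}$ is a $J$-holomorphic curve in $\gra$ satisfying $\zeta_1 = \zeta_4 = 0$.

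The main technical obstacle will be bookkeeping around signs and conventions: verifying that the precise coefficients and factors of $i$ in the definitions of $\zeta_3$ and $\zeta_4$ line up so that $\zeta_3$ of type $(1,0)$ corresponds exactly to minimality and $\zeta_4 = 0$ to the negative-spin superminimal condition, and handling the possible degenerate locus where $\zeta_2$ vanishes so that $S$ fails to project immersively to a surface in $S^4$. Once this coefficient matching is performed cleanly, the geometric identifications in both directions are essentially forced.
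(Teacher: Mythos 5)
Your proposal is correct, and the key identities you predict do check out: writing $\mu_{24}=a_1\xi_1+a_2\xi_2$, $\mu_{25}=a_2\xi_1+a_4\xi_2$, $\mu_{34}=b_1\xi_1+b_2\xi_2$, $\mu_{35}=b_2\xi_1+b_4\xi_2$ with $\sigma=\xi_1-i\xi_2$ (so that $\zeta_2\propto\sigma$), the requirement that $\zeta_3$ be a multiple of $\sigma$ is exactly $a_1+a_4=b_1+b_4=0$, and $\zeta_4=0$ is exactly this together with $b_1=a_2$, $b_2=-a_1$, i.e.\ $II^{(2,0)}$ takes values in one of the two isotropic lines of $N^{\mathbb{C}}\Sigma$ --- superminimality of the appropriate spin. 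This is a genuinely different route from the paper's. There, the proof exhibits the Maurer--Cartan form of $\SO(5)$ restricted to $\mathcal{G}(S)$ under $\zeta_1=\zeta_4=0$, asserts that the adapted frame bundle of a superminimal surface produces an identically-shaped coframing, and concludes by Cartan's Theorem \ref{thm:MaurerCartan}; the translation of superminimality into the vanishing of the relevant Maurer--Cartan entries is left implicit. You instead identify $\Sigma$ directly as the basepoint projection of $S$ (legitimate since $\zeta_1=0$ forces $d\mathbf{e}_1\in\mathrm{span}(\mathbf{e}_4,\mathbf{e}_5)$) and perform that translation explicitly. What your approach buys is a transparent handling of the orientation/spin ambiguity in ``the normal lift,'' which the paper elides; what it costs is the sign bookkeeping you already flag. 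One caveat applies equally to both arguments: by (\ref{eq:StructFlag}), $\{\zeta_1=\zeta_2=\zeta_4=0\}$ is a Frobenius system, and its leaves are $J$-holomorphic curves satisfying the hypotheses that project to points of $S^4$ rather than surfaces, so the conclusion genuinely requires $\zeta_2\not\equiv 0$; on the other hand, since $d\zeta_2\equiv -i(\rho_1+\rho_2)\wedge\zeta_2$ when $\zeta_1=\zeta_4=0$, a nonzero $\zeta_2$ has only isolated zeros (branch points), away from which your local identification goes through.
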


\begin{proof}
	Let $S$ be a holomorphic curve in $\gra$ with $\zeta_1 = \zeta_4 = 0.$ Using the notation of \S\ref{sect:Ruled}, the Maurer-Cartan form of $\SO(5)$ restricted to $\mathcal{G}(S)$ is
	\begin{equation}\label{eq:Z5SO5MC}
	\left[ \begin{array}{ccccc}
	0 & 0 & 0 & \sqrt{2} \Re \zeta_2 & - \sqrt{2} \Im \zeta_2 \\
	0 & 0 & \rho_1 - \rho_2 & \Re \zeta_3 & \Im \zeta_3 \\
	0 & -\rho_1 + \rho_2 & 0 & \Im \zeta_3 & - \Re \zeta_3 \\
	- \sqrt{2} \Re \zeta_2 & - \Re \zeta_3 & - \Im \zeta_3 & 0 & \rho_1 + \rho_2 \\
	\sqrt{2} \Im \zeta_2 & - \Im \zeta_3 & \Re \zeta_3 & - \rho_1 - \rho_2 & 0 
	\end{array} \right].
	\end{equation}
	
	Now let $h: U \to S^4$ be a superminimal surface in $S^4,$ and define an adapted coframe bundle $\mathcal{A}(U) \subset h^* \SO(5)$ by letting the fibre of $\mathcal{A}(U)$ at $p \in U$ be
	\begin{equation*}
	\mathcal{A}(U)_p = \left\lbrace \left( \mathbf{e}_1, \ldots, \mathbf{e}_5 \right) \in \SO(5) \mid \mathbf{e}_1 = p, \:\: T_p U = \mathrm{span} \left(\mathbf{e}_4, \mathbf{e}_5 \right) \right\rbrace.
	\end{equation*}
	The nature of the frame adaptation and the superminimality condition on $U$ together imply that the pullback of Maurer-Cartan form of $\SO(5)$ to $\mathcal{A}(U)$ may be written (with a judicious choice of notation) identically to (\ref{eq:Z5SO5MC}). The result then follows from Cartan's Theorem \ref{thm:MaurerCartan}.
\end{proof}

\subsubsection{The case $\G = \mathbb{Z}_4$}\label{ssect:D4Assoc}

Reasoning similar to the previous section implies that associative 3-folds whose tangent space has $\SO(3)$-stabiliser everywhere equal to $\mathbb{Z}_4$ are ruled associative submanifolds. The proof of the following proposition is then straightforward.

\begin{prop}
	Let $N \to \Ber$ be an associative submanifold of the Berger space such that the tangent space of $N$ has $\SO(3)$-stabiliser everywhere equal to $\mathbb{Z}_4.$ Then $N$ is a ruled associative submanifold which is locally the $\pi \circ \lambda^{-1}$ image of a $J$-holomorphic curve in $\gra$ satisfying $\zeta_2 = 0.$ Conversely, any $J$-holomorphic curve in $\gra$ satisfying $\zeta_2 = 0$ gives rise via Theorem \ref{thm:RuledAssoc} to an associative submanifold whose tangent space is fixed by $\mathbb{Z}_4$.
\end{prop}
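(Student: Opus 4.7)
The plan is to mirror the argument for the $\mathbb{Z}_5$ case step by step. By Proposition \ref{prop:CycStab}(2), the hypothesis on $N$ forces every tangent plane to lie in the $\SO(3)$-orbit of $Q^{(4\mathrm{a})}(\theta) = \mathrm{span}(e_1,\cos\theta\,e_2 + \sin\theta\,e_7,\cos\theta\,e_3 + \sin\theta\,e_6)$. Adapting frames accordingly produces a $\mathbb{Z}_4$-reduction $\mathcal{P}_4(N) \to N$ of the pullback of $\SO(5) \to \Ber$, on which the four equations
\begin{equation*}
\omega_4 = \omega_5 = 0, \qquad -\sin\theta\,\omega_2 + \cos\theta\,\omega_7 = 0, \qquad -\sin\theta\,\omega_3 + \cos\theta\,\omega_6 = 0
\end{equation*}
hold, with $\theta$ a well-defined function on $\mathcal{P}_4(N)$. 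A natural semibasic coframe is then $\kappa_1 = \omega_1$, $\kappa_2 = \cos\theta\,\omega_2 + \sin\theta\,\omega_7$, $\kappa_3 = \cos\theta\,\omega_3 + \sin\theta\,\omega_6$.

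The next step is to differentiate these four vanishing equations and substitute the Berger structure equations (\ref{eq:BerStruct1}). As in the derivation of (\ref{eq:Z5conseq}), solving the resulting algebraic system will express $d\theta$ and the connection forms $\gamma_1, \gamma_2, \gamma_3$ in terms of the $\kappa_i$ together with two auxiliary functions $t_2, t_3$. Substituting these expressions back yields an analogue of (\ref{eq:Z5AssocStruct}) in which the Pfaffian system $\langle \kappa_2, \kappa_3 \rangle$ is Frobenius on $\mathcal{P}_4(N)$. Its leaves project to $\mathcal{C}$-curves in $\Ber$, so $N$ is ruled. Applying Theorem \ref{thm:RuledAssoc}, $N$ is locally the $\pi \circ \lambda^{-1}$-image of a $J$-holomorphic curve $S$ in $\gra$; and since $\omega_4 + i\omega_5 = -\tfrac{3\sqrt{10}}{10}\zeta_2$ by (\ref{eq:omegazeta}), the condition $\omega_4 = \omega_5 = 0$ translates directly to $\zeta_2 = 0$ on $S$.

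For the converse, let $S$ be a $J$-holomorphic curve in $\gra$ with $\zeta_2 = 0$. Then $W_2 = 0$ on $\mathcal{G}(S)$, and (\ref{eq:Wsqeq}) reduces to $|W_1|^2 + |W_3|^2 + |W_4|^2 = 1$, while (\ref{eq:omegazeta}) gives $\omega_4 = \omega_5 = 0$ along the associated ruled associative. Using the residual $\mathrm{T}^2$-gauge freedom on $\mathcal{G}(S)$ to bring $(W_1, W_3, W_4)$ into a normal form governed by a single angular parameter $\theta$, one checks via (\ref{eq:omegazeta}) that the remaining defining equations of a $Q^{(4\mathrm{a})}(\theta)$-adapted frame are satisfied, so that $\mathbb{Z}_4$ fixes each tangent plane of the resulting associative.

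The main obstacle is primarily bookkeeping: I expect the hardest part to be the converse, where one must pin down the precise identification between the gauge orbit of $(W_1, W_3, W_4)$ under $\mathrm{T}^2$ and the orbit of $Q^{(4\mathrm{a})}(\theta)$ under $\SO(3)$, verifying that no further constraint is imposed and that the $\mathbb{Z}_4$-stabiliser does not generically enlarge. Once this identification is settled, the remainder of the argument is strictly parallel to the $\mathbb{Z}_5$ derivation carried out just above.
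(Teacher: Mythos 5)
Your proposal is correct and follows exactly the route the paper takes: the paper's own treatment of the $\mathbb{Z}_4$ case is literally ``reasoning similar to the previous section,'' i.e.\ the frame adaptation to $Q^{(4\mathrm{a})}(\theta)$, differentiation of the vanishing conditions to see that $\gamma_2,\gamma_3$ have no $\kappa_1$-component (so the leaves of $\langle\kappa_2,\kappa_3\rangle$ are tangent to $\mathcal{D}$ and $N$ is ruled), and the translation $\omega_4=\omega_5=0 \Leftrightarrow \zeta_2=0$ via (\ref{eq:omegazeta}). One minor remark: for the converse the stated conclusion is only that the tangent planes are \emph{fixed by} $\mathbb{Z}_4$, so the normal-form bookkeeping you flag as the hardest step need not rule out an enlargement of the stabiliser.
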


By Proposition \ref{prop:JNK} and the surrounding discussion, the $J$-holomorphic curves in $\gra$ with $\zeta_2=0$ are exactly the lifts of $J_{\mathrm{NK}}$-holomorphic curves in $\mathbb{CP}^3.$

\subsubsection{The case $\G = \mathbb{Z}_3$}

By repeatedly applying the identity $d^2 = 0$ to the structure equations of an associative submanifold $N$ whose tangent spaces are everywhere fixed by the action of $\mathbb{Z}_3,$ it is possible to show that the tangent spaces of $N$ are in fact stabilised by a group strictly larger than $\mathbb{Z}_3.$ The details of this calculation are routine but long, so we do not include them.

\begin{prop}
	There are no associative submanifolds of $\Ber$ whose tangent spaces have $\SO(3)$-stabiliser everywhere exactly equal to $\mathbb{Z}_3.$
\end{prop}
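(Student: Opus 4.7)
The plan is to mimic the method used for the $\mathbb{Z}_5$ and $\mathbb{Z}_4$ cases. Suppose $f : N \to \Ber$ is an associative $3$-fold with tangent spaces having $\SO(3)$-stabiliser everywhere exactly $\mathbb{Z}_3$. By Proposition \ref{prop:CycStab}, one can adapt frames so that $T_pN$ lies in the $\SO(3)$-orbit of $Q^{(3)}(\theta)$ for some smooth function $\theta$ on the resulting $\mathbb{Z}_3$-subbundle $\mathcal{P}_3(N) \subset f^*\SO(5)$, with $\theta$ taking values in $(0,\pi) \setminus \{\arccos(\sqrt{6}/3),\pi/2\}$. Concretely this is equivalent to the vanishing on $\mathcal{P}_3(N)$ of the four $1$-forms
\begin{align*}
-\sin 2\theta\,\omega_1 + \cos 2\theta\,\omega_6, \quad \sin\theta\,\omega_2 - \cos\theta\,\omega_5, \quad \sin\theta\,\omega_3 - \cos\theta\,\omega_4, \quad \omega_7,
\end{align*}
while the three $1$-forms
\begin{align*}
\kappa_1 = \cos 2\theta\,\omega_1 + \sin 2\theta\,\omega_6, \quad \kappa_2 = \cos\theta\,\omega_2 + \sin\theta\,\omega_5, \quad \kappa_3 = \cos\theta\,\omega_3 + \sin\theta\,\omega_4
\end{align*}
form a coframe for the pullback of $TN$ to $\mathcal{P}_3(N)$.

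The next step is to exterior-differentiate each of the four vanishing conditions using the structure equations (\ref{eq:BerStruct1}) and (\ref{eq:BerStruct2}), and to reduce the resulting $2$-form identities modulo the ideal they generate. Just as in the $\mathbb{Z}_5$ case (cf.\ equations (\ref{eq:Z5conseq})), this forces $d\theta$ together with the three connection $1$-forms $\gamma_1,\gamma_2,\gamma_3$ to be specific linear combinations of $\kappa_1,\kappa_2,\kappa_3$, with coefficients that are rational in $\sin\theta,\cos\theta$ and that introduce a controlled number of new torsion functions on $\mathcal{P}_3(N)$. Substituting these back into the structure equations yields a closed differential system on $\mathcal{P}_3(N)$ depending only on $\theta$ and the new torsion functions.

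The main obstacle, and the bulk of the work, lies in prolonging this closed system. Applying $d^2=0$ to the derived equations produces further algebraic constraints on $\theta$ and on the torsion functions; each successive round of differentiation sharpens these constraints, and the expectation is that after a bounded number of rounds the system becomes algebraically overdetermined. The target statement one is aiming for is that the resulting polynomial identities in $\sin\theta,\cos\theta$ admit no solution with $\theta$ in the open set $(0,\pi) \setminus \{\arccos(\sqrt{6}/3),\pi/2\}$: any $\theta$ satisfying all the prolonged relations must lie in the excluded locus $\{\arccos(\sqrt{6}/3),\pi/2\}$, on which Proposition \ref{prop:CycStab} guarantees that $Q^{(3)}(\theta)$ has $\SO(3)$-stabiliser strictly larger than $\mathbb{Z}_3$. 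This contradicts the standing hypothesis on $N$, completing the proof. The calculation is algebraically lengthy because the three distinct trigonometric weights $\cos\theta,\sin\theta,\cos 2\theta$ couple non-trivially with nearly every entry of the matrix appearing in (\ref{eq:BerStruct1}), but the method introduces no new ideas beyond those already deployed in the $\mathbb{Z}_5$ and $\mathbb{Z}_4$ cases.
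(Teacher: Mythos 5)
Your proposal follows essentially the same route as the paper's own argument: adapt coframes to the family $Q^{(3)}(\theta)$ of Proposition \ref{prop:CycStab} (your annihilator $1$-forms and coframe $\kappa_1,\kappa_2,\kappa_3$ are set up correctly), then repeatedly apply $d^2=0$ to the induced structure equations until the torsion relations force $\theta$ into the excluded locus $\{\arccos(\sqrt{6}/3),\pi/2\}$, contradicting the hypothesis that the stabiliser is exactly $\mathbb{Z}_3$. The paper likewise declines to print the decisive prolongation computation, calling it ``routine but long,'' so your sketch matches the published proof both in method and in its level of completeness.
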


\bibliography{BergerAssocRefs}

\Addresses

\end{document}